\numberwithin{equation}{section}
\newtheorem{thm}{Theorem}[section]
\newtheorem{pro}[thm]{Proposition}
\newtheorem{cor}[thm]{Corollary}
\newtheorem{lem}[thm]{Lemma}
\newtheorem*{opq*}{\bf Problem}
\newtheorem{dfn}[thm]{Definition}
\theoremstyle{remark}
\newtheorem{rem}[thm]{Remark}
\newtheorem*{bc*}{\sc Basic Construction}
\theoremstyle{definition}
\newtheorem{exa}[thm]{Example}
\newcommand*{\acal}{\EuScript{A}}
\newcommand*{\bcal}{\EuScript{B}}
\newcommand*{\cbb}{\mathbb{C}}
\newcommand*{\dbb}{\mathbb{D}}
\newcommand*{\dcal}{\EuScript{D}}
\newcommand*{\ecal}{\EuScript{E}}
\newcommand*{\ff}{\EuScript{F}}
\newcommand*{\lcal}{\EuScript{L}}
\newcommand*{\gibh}{\mathscr{I}_{\hh_1,\hh_2}}
\newcommand*{\gqbh}{\mathscr{Q}_{\hh_1,\hh_2}}
\newcommand*{\gnbh}{\mathscr{N}_{\hh_1,\hh_2}}
\newcommand*{\gsbh}{\mathscr{S}_{\hh_1,\hh_2}}
\newcommand*{\ghbh}{\mathscr{H}_{\hh_1,\hh_2}}
\newcommand*{\gsbhh}[1]{\mathscr{S}_{{#1}}}
\newcommand*{\gnbhh}[1]{\mathscr{N}_{{#1}}}
\newcommand*{\gubhh}[1]{\mathscr{U}_{{#1}}}
\newcommand*{\Ge}{\geqslant}
\newcommand*{\hh}{\EuScript{H}}
\newcommand*{\is}[2]{\langle#1,#2\rangle}
\newcommand*{\jd}[1]{\mathscr N(#1)}
\newcommand*{\kk}{\EuScript{K}}
\newcommand*{\Le}{\leqslant}
\newcommand*{\mcal}{\EuScript{M}}
\newcommand*{\ncal}{\EuScript{N}}
\newcommand*{\ob}[1]{{\mathscr R}(#1)}
\newcommand*{\ogr}[1]{\boldsymbol B(#1)}
\begin{document}
   \title[Lifting Brownian-type operators with subnormal entry]
{Lifting Brownian-type operators with subnormal entry
   }
   \author[S. Chavan]{Sameer Chavan}
   \address{Department of Mathematics and Statistics\\
Indian Institute of Technology Kanpur, India}
   \email{chavan@iitk.ac.in}
   \author[Z.\ J.\ Jab{\l}o\'nski]{Zenon Jan
Jab{\l}o\'nski}
   \address{Instytut Matematyki,
Uniwersytet Jagiello\'nski, ul.\ \L ojasiewicza 6,
PL-30348 Kra\-k\'ow, Poland}
\email{Zenon.Jablonski@im.uj.edu.pl}
   \author[I.\ B.\ Jung]{Il Bong Jung}
   \address{Department of Mathematics,
Kyungpook National University, Daegu 702-701, Korea}
   \email{ibjung@knu.ac.kr}
   \author[J.\ Stochel]{Jan Stochel}
\address{Instytut Matematyki, Uniwersytet
Jagiello\'nski, ul.\ \L ojasiewicza 6, PL-30348
Kra\-k\'ow, Poland} \email{Jan.Stochel@im.uj.edu.pl}
   \thanks{The research of the second and fourth
authors was supported by the National Science Center
(NCN) Grant OPUS No.\ DEC-2021/43/B/ST1/01651. The
research of the third author was supported by Basic
Science Research Program through the National Research
Foundation of Korea (NRF) funded by the Ministry of
Education (NRF-2021R111A1A01043569).}
   \subjclass[2020]{Primary 47A08, 47A20, 46A22
Secondary 47B20, 47A10}
   \keywords{Upper triangular $2\times 2$ block
matrix, subnormal operator, normal operator, lifting,
spectral inclusion}
   \begin{abstract}
In this paper, we study Brownian-type operators, which
are upper triangular $2\times 2$ block matrix
operators with entries satisfying some algebraic
constraints. We establish a lifting theorem stating
that any Brownian-type operator with subnormal $(2,2)$
entry lifts to a Brownian-type operator with normal
$(2,2)$ entry, where lifting is understood in the
sense of extending entries of the block matrices
representing the operators in question. The spectral
inclusion and the filling in holes theorems are
obtained for such operators.
   \end{abstract}
   \maketitle
   \section{Introduction}
Upper triangular $2\times 2$ block matrices appear in various
parts of operator theory and functional analysis on the
occasion of investigating a variety of topics including the
Halmos similarity problem for polynomially bounded operators
\cite{Fog64,Pi97}, the hyperinvariant subspace problem
\cite{Do-Pe72,Ki12,JKP18,JKP19}, the task of finding models
for the time shift operator for stochastic processes
\cite{Ag-St}, the question of characterizing invertibility of
upper triangular $2\times 2$ block matrices \cite{WLee00},
the task of searching for a model theory for $2$-hyponormal
operators \cite{Cur-Lee02}, the problem of determining a
complete set of unitary invariants for the class of
Cowen-Douglas operators realized as upper triangular block
$2\times 2$ matrices \cite{Mis17}, and many others. This
paper continues the study initiated in \cite{C-J-J-S21} of
Brownian-type operators, which are upper triangular $2\times
2$ block matrix operators with entries satisfying some
algebraic constraints (see Definition~\ref{defq}).

The goal of this paper is to prove a lifting theorem for
Brownian-type operators with subnormal $(2,2)$ entry.
Typically, such a theorem involves finding an extension in a
better-understood class of operators. In our case, such a
natural class consists of Brownian-type operators with normal
$(2,2)$ entry. The main result of the present paper shows
that any Brownian-type operator with subnormal $(2,2)$ entry
lifts to a Brownian-type operator with normal $(2,2)$ entry
(see Theorem~\ref{etrest}), where lifting is understood in
the sense of extending entries of the block matrices
representing the operators in question. Such extensions are
called entrywise extensions (see Definition~\ref{enrty}). In
fact, Theorem~\ref{etrest}(iii) guarantees the existence of
an entrywise extension having additional features related to
the modulus of the $(1,2)$ entry and the minimality of a
normal extension of the $(2,2)$ entry. For such extensions,
we establish the spectral inclusion and the filling in holes
theorems, which usually are attributed to subnormal operators
(see Theorem~\ref{yrwq}). This is rather surprising because
Brownian-type operators with subnormal (or even unitary)
$(2,2)$ entry are generally not subnormal (see
Lemma~\ref{nonur} and Example~\ref{noistx}). Let us mention
that the notion of entrywise extension is essentially
stronger than that of extension. Namely, as shown in
Example~\ref{noistn}, there is a $2$-isometry which is a
Brownian-type operator with subnormal $(2,2)$ entry and which
has no entrywise extension to a Brownian unitary (see
Section~\ref{Sec.4} for the definition). However, by
\cite[Theorem~5.80]{Ag-St} it has an extension to a Brownian
unitary.

Before stating the main theorem and related results,
we provide the necessary concepts. We say that a
bounded linear operator $T$ on a (complex) Hilbert
space $\hh$ is a {\em $2$-isometry} if $T^{*2}T^{2} -
2 T^*T + I =0$. By \cite[Lemma~1]{R-0}, $\triangle_T
:=T^*T-I \Ge 0$ for any $2$-isometry $T$. A
$2$-isometry $T$ is called a {\em Brownian isometry}
if $\triangle_T \triangle_{T^*} \triangle_T=0$, and a
{\em quasi-Brownian isometry} if $\triangle_T T =
\triangle_T^{1/2} T \triangle_T^{1/2}$. The notions of
a $2$-isometry and a Brownian isometry were introduced
in \cite{Ag-0} and \cite{Ag-St}, respectively. An
interest in studying $2$-isometries comes from the
investigation of the invariant subspaces of the
Dirichlet shift \cite{R-0} and the time shift operator
of the modified Brownian motion process \cite{Ag-St}.
The class of quasi-Brownian isometries was studied in
\cite{Maj,B-S,A-C-J-S,A-C-J-S-2}. Let us recall that
non-isometric Brownian and quasi-Brownian isometries
have an upper triangular $2\times 2$ block matrix
representation with entries satisfying some algebraic
constraints (see \cite[Theorem~4.1]{A-C-J-S} and the
references therein). This provided the motivation in
\cite{C-J-J-S21} to study a broader class of operators
called Brownian-type operators, which includes
quasi-Brownian isometries (in particular, Brownian
unitaries and Brownian isometries).

Given two Hilbert spaces $\hh$ and $\kk,$ we denote by
$\ogr{\hh,\kk}$ the Banach space of all bounded linear
operators from $\hh$ to $\kk$. The kernel, the range and the
modulus of an operator $T \in \ogr{\hh,\kk}$ are denoted by
$\jd{T}$, $\ob T$ and $|T|$, respectively. We regard
$\ogr{\hh}:=\ogr{\hh,\hh}$ as a $C^*$-algebra. The identity
operator on $\hh$ is denoted by $I_\hh,$ or simply by $I$ if
no ambiguity arises. For a subset $\mathscr{F}$ of
$\ogr{\hh}$, we denote by $\mathscr{F}'$ the {\em commutant}
of $\mathscr{F}$, which is the set of operators $A\in
\ogr{\hh}$ such that $AB=BA$ for all $B\in \mathscr{F}$. It
is a well-known fact that $\mathscr{F}'$ is a von Neumann
algebra and $\mathscr{F}' = W^*(\mathscr{F})'$, where
$W^*(\mathscr{F})$ denotes the von Neumann algebra generated
by $\mathscr{F} \cup \{I\}$ (see \cite{Mur90} for more
details). Note that if $E\in \ogr{\hh,\kk}$, then it follows
from \cite[Problem~56]{Hal} and the injectivity of
$E|_{\overline{\ob{|E|}}}\colon \overline{\ob{|E|}} \to \kk$
that
   \begin{align} \label{tres}
\dim \overline{\ob{|E|}} \Le \dim \kk.
   \end{align}
We say that $T\in \ogr{\hh,\kk}$ is {\em left-invertible} if
there exists an operator $L\in \ogr{\kk,\hh}$, called a {\em
left-inverse} of $T$, such that $LT=I_{\hh}$. If $T$ is
left-invertible, then $\jd{T}=\{0\}$ and $\ob{T}$ is closed.
It is easy to see that
   \begin{align} \label{luinw}
   \begin{minipage}{63ex}
\textit{$T\in \ogr{\hh,\kk}$ is injective $($resp.,
left-invertible$)$ if and only if $|T|$ is injective
$($resp., invertible$)$}.
   \end{minipage}
   \end{align}
Given two operators $T\in \ogr{\hh}$ and $S\in \ogr{\kk}$, we
write $T\subseteq S$ and say that $S$ {\em extends} $T$, or
that $S$ is an {\em extension} of $T$, if $\hh \subseteq \kk$
(an isometric embedding) and $Th=Sh$ for every~$h\in \hh$.

We say that $S\in \ogr{\hh}$ is {\em subnormal} if there
exist a Hilbert space $\kk$ and a normal operator $N\in
\ogr{\kk}$ such that $S \subseteq N$; if this is the case,
the operator $N$ is called a {\em normal extension} of $S$. A
normal extension $N\in \ogr{\kk}$ of $S$ is said to be {\em
minimal} if $\kk$ is the only closed subspace of $\kk$
containing $\hh$ and reducing $N$; if this is the case, we
write $N=\mathrm{mne}\, S$. Note that if $N$ is a normal
extension of $S$, then $P \in \{N\}'$ if and only if $S$ is
normal, where $P\in \ogr{\kk}$ is the orthogonal projection
of $\kk$ onto $\hh$. Recall that (see
\cite[Proposition~II.2.4]{Co91})
   \begin{align} \label{mynipu}
   \begin{minipage}{70ex}
{\em a normal extension $N\in \ogr{\kk}$ of a subnormal
operator $S\in \ogr{\hh}$ is minimal if and only if $\kk =
\bigvee_{n=0}^{\infty} N^{*n}\hh$}.
   \end{minipage}
   \end{align}
Any subnormal operator has a minimal normal extension. If
$N=\mathrm{mne}\, S$ and $A\in \{S,S^*\}^{\prime}$, then
there exists a unique $B\in \{N\}^{\prime} \cap
\{P\}^{\prime}$ such that $A \subseteq B$, where $P$ is as
above (see \cite[Corollary, p.\ 88]{Br55}). Such $B$ is
called the {\em lift of} $A$ {\em relative to}
$\{N\}^{\prime}$; we also say that $A$ {\em lifts} to
$\{N\}^{\prime}$. The uniqueness of $B$ follows from the
minimality of $N$ and the fact that
   \begin{align} \label{cioka}
\textit{$BN^{*n}h=N^{*n}Ah$ for all integers $n\Ge 0$
and all vectors $h\in \hh$.}
   \end{align}
The following fact is a consequence of the Fuglede
theorem.
   \begin{align} \label{luftr}
   \begin{minipage}{65ex}
{\em If $B$ is the lift of $A$ relative to
$\{N\}^{\prime}$, then $B^*$ is the lift of $A^*$
relative to $\{N\}^{\prime}$.}
   \end{minipage}
   \end{align}
We call an operator $T\in \ogr{\hh}$ {\em hyponormal}
if $\|T^*f\| \Le \|Tf\|$ for all $f \in \hh$. We say
that $T\in \ogr{\hh}$ is {\em quasinormal} if
$TT^*T=T^*TT$, or equivalently if and only if
$T|T|=|T|T$ (see \cite{Brow53}). Quasinormal operators
are subnormal and subnormal operators are hyponormal
(see \cite[Propositions~II.1.7 and II.4.2]{Co91}).
Summarizing, we have the following inclusions:
   \begin{align} \label{minuclus}
   \begin{gathered}
\mathscr{U} \subseteq \mathscr{I} \subseteq
\mathscr{Q},
   \\
\mathscr{U} \subseteq \mathscr{N} \subseteq
\mathscr{Q} \subseteq \mathscr{S} \subseteq
\mathscr{H},
   \end{gathered}
   \end{align}
where $\mathscr{U}$, $\mathscr{I}$, $\mathscr{N}$,
$\mathscr{Q}$, $\mathscr{S}$ and $\mathscr{H}$ stand for the
classes of unitary, isometric, normal, quasinormal, subnormal
and hyponormal operators, respectively. We refer the reader
to \cite{Co91} for more information on the above concepts.

Now we can recall the definition of the Brownian-type
operator.
   \begin{dfn}[cf.\ {\cite[Definition~1.1]{C-J-J-S21}}]
\label{defq}
   We say that $T\in \ogr{\hh}$ is a Brownian-type
operator if it has the block matrix form
   \begin{align*}
T = \begin{bmatrix} V & E \\ 0 & X
\end{bmatrix}
   \end{align*}
with respect to an orthogonal decomposition $\hh=\hh_1
\oplus \hh_2$, where the spaces $\hh_1$ and $\hh_2$
are nonzero and the operators $V\in \ogr{\hh_1},$
$E\in \ogr{\hh_2,\hh_1}$ and $X\in \ogr{\hh_2}$
satisfy the following conditions:
   \allowdisplaybreaks
   \begin{gather} \label{gqb-1}
\text{$V$ is an isometry, i.e., $V^*V=I,$}
   \\  \label{gqb-2}
V^*E=0,
   \\ \label{gqb-3}
XE^*E=E^*EX.
   \end{gather}
If $X$ is unitary $($resp., isometric, normal,
quasinormal, subnormal, hyponormal, etc.$)$, then $T$
is called a Brownian-type operator of class
$\mathscr{U}$ $($resp., $\mathscr{I}$, $\mathscr{N},$
$\mathscr{Q},$ $\mathscr{S},$ $\mathscr{H},$ etc.$)$.
If $\mathscr{X}$ is one of the above classes, we say
simply that $T$ is an operator of class $\mathscr{X}$
and write $T = \big[\begin{smallmatrix} V & E \\
0 & X \end{smallmatrix}\big] \in
\mathscr{X}_{\hh_1,\hh_2}$.
   \end{dfn}
In connection with Definition~\ref{defq}, it is worth
making a few comments.
   \begin{rem}
First, note that by the square root theorem
\cite[Theorem~2.4.4]{Sim-4}, the condition
\eqref{gqb-3} is equivalent to
   \begin{align} \label{gqb-3b}
X|E|=|E|X.
   \end{align}
This yields the following.
   \begin{align} \label{reotp}
   \begin{minipage}{68ex}
{\em If $T=\big[\begin{smallmatrix} V & E \\
0 & X \end{smallmatrix}\big]$ is a Brownian-type
operator, then $\overline{\ob{|E|}}$ reduces $X$.}
   \end{minipage}
   \end{align}
Secondly, note that by \eqref{minuclus}, we have the
following chains of inclusions:
   \begin{align*}
   \begin{gathered}
\mathscr{U}_{\hh_1,\hh_2} \subseteq
\mathscr{I}_{\hh_1,\hh_2} \subseteq
\mathscr{Q}_{\hh_1,\hh_2},
   \\
\mathscr{U}_{\hh_1,\hh_2}
\subseteq\mathscr{N}_{\hh_1,\hh_2} \subseteq
\mathscr{Q}_{\hh_1,\hh_2} \subseteq
\mathscr{S}_{\hh_1,\hh_2} \subseteq
\mathscr{H}_{\hh_1,\hh_2}.
   \end{gathered}
   \end{align*}
Thirdly, one can deduce from \cite[Proposition~5.37 and
Theorem~5.48]{Ag-St} (resp., \cite[Proposition~ 5.1]{Maj})
that a non-isometric operator $T\in \ogr{\hh}$ is a Brownian
isometry (resp., a quasi-Brownian isometry) if and only if
$T$ is of class $\mathscr{U}$ (resp., $\mathscr{I}$); to
avoid the injectivity of $E$, see
\cite[Theorem~4.1]{A-C-J-S}. This means that Brownian
isometries are quasi-Brownian. In view of \cite[Example~
4.4]{A-C-J-S}, the converse implication is not true
in~general.
   \hfill $\diamondsuit$
   \end{rem}
Next, we define the notion of entrywise extension, which
plays a key role in this paper.
   \begin{dfn}  \label{enrty}
Let $\big[\begin{smallmatrix} A_{11} & A_{12} \\
A_{21} & A_{22} \end{smallmatrix}\big]
\in \ogr{\hh_1 \oplus \hh_2}$ and
$\big[\begin{smallmatrix} \tilde A_{11} & \tilde A_{12} \\
\tilde A_{21} & \tilde A_{22}
\end{smallmatrix}\big] \in \ogr{\kk_1
\oplus \kk_2}$. We say that
$\big[\begin{smallmatrix} \tilde A_{11} & \tilde A_{12} \\
\tilde A_{21} & \tilde A_{22}
\end{smallmatrix}\big]$ is an
entrywise extension of $\big[\begin{smallmatrix}
A_{11} & A_{12} \\
A_{21} & A_{22} \end{smallmatrix}\big]$
if
   \begin{align} \label{hikpur}
\text{$\hh_j \subseteq \kk_j$ and
$A_{ij}h_j = \tilde A_{ij}h_j$ for $h_j
\in \hh_j$ and $i,j = 1,2$.}
   \end{align}
If this is the case, then we write
   \begin{align} \label{asdlet}
\begin{bmatrix} A_{11} & A_{12} \\
A_{21} & A_{22}
   \end{bmatrix}
  \preceq
\begin{bmatrix} \tilde A_{11} & \tilde A_{12} \\
\tilde A_{21} & \tilde A_{22}
   \end{bmatrix}.
   \end{align}
   \end{dfn}
Obviously, \eqref{asdlet} implies that
   \begin{align*}
\begin{bmatrix} A_{11} & A_{12} \\
A_{21} & A_{22} \end{bmatrix} \subseteq
\begin{bmatrix} \tilde A_{11} & \tilde A_{12} \\
\tilde A_{21} & \tilde A_{22}
\end{bmatrix}  \quad  \text{(extension of block matrix
operators).}
   \end{align*}
The converse implication holds provided $\hh_j \subseteq
\kk_j$ for $j=1,2$. Note that if the orthogonal
decompositions $\hh_1\oplus \hh_2$ and $\kk_1\oplus \kk_2$
are not related to each other as in \eqref{hikpur}, then
there may exist extensions that are not entrywise (see
Example~\ref{noistn}).

We now state the main result of this paper, which is a
lifting theorem for Brownian-type operators of class
$\mathscr{S}$.
   \begin{thm}\label{etrest}
Let $T=\big[\begin{smallmatrix} V & E \\
0 & S \end{smallmatrix}\big] \in\ogr{\hh_1 \oplus \hh_2}$ and
$\dcal$ be a closed subspace of $\hh_1 \ominus \big(\ob{V}
\vee \overline{\ob{E}}\big)$. Then the following conditions
are equivalent{\em :}
   \begin{enumerate}
   \item[(i)] $T \in \gsbh$,
   \item[(ii)] there
exist Hilbert spaces $\kk_1$ and $\kk_2$ and
$\big[\begin{smallmatrix} \tilde V & \tilde E \\ 0 & N
\end{smallmatrix}\big] \in \gnbhh{\kk_1, \kk_2}$ such
that $T \preceq \big[\begin{smallmatrix} \tilde V &
\tilde E \\ 0 & N \end{smallmatrix}\big]$ and
   \begin{align} \label{pruypec}
|E| \subseteq |\tilde E|,
   \end{align}
   \item[(iii)] for any
minimal normal extension $N\in
\ogr{\kk_2}$ of $S$, there exist a
Hilbert space $\kk_1$ and operators
$\tilde V\in \ogr{\kk_1}$ and $\tilde
E\in \ogr{\kk_2,\kk_1}$ such that
$\big[\begin{smallmatrix} \tilde V &
\tilde E \\ 0 & N \end{smallmatrix}\big]
\in \gnbhh{\kk_1, \kk_2}$, $T\preceq
\big[\begin{smallmatrix} \tilde V &
\tilde E \\ 0 & N \end{smallmatrix}\big]
$, \eqref{pruypec} holds, $\dim \kk_j =
\dim \hh_j$ for $j=1,2$ and
   \begin{align} \label{rozkly}
\dcal = \kk_1 \ominus (\ob{\tilde V}
\oplus \overline{\ob{\tilde E}}).
   \end{align}
   \end{enumerate}
Moreover, if $\big[\begin{smallmatrix} \tilde V &
\tilde E \\ 0 & N
\end{smallmatrix}\big]$ is as in {\em (ii)} and
$N=\mathrm{mne}\,S$, then
   \begin{enumerate}
   \item[(a)] $|\tilde E|$ is the lift of $|E|$ relative to
$\{N\}'$,
   \item[(b)]
$E$ is injective $($resp.,
left-invertible, isometric$)$ if and only
if $\tilde E$ is injective $($resp.,
left-invertible, isometric$)$.
   \end{enumerate}
   \end{thm}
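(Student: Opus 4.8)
The plan is to prove the chain of implications (iii)$\Rightarrow$(ii)$\Rightarrow$(i)$\Rightarrow$(iii), and then establish the ``Moreover'' conclusions (a) and (b) alongside (ii)$\Rightarrow$(i) and the construction in (i)$\Rightarrow$(iii). The implication (iii)$\Rightarrow$(ii) is immediate since any subnormal $S$ has a minimal normal extension, so the content lies in (ii)$\Rightarrow$(i) and in (i)$\Rightarrow$(iii).

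For (ii)$\Rightarrow$(i): suppose $T\preceq \big[\begin{smallmatrix} \tilde V & \tilde E \\ 0 & N\end{smallmatrix}\big]\in\gnbhh{\kk_1,\kk_2}$ with $|E|\subseteq|\tilde E|$. Then $N$ is normal and $S\subseteq N$, so $S$ is subnormal; it only remains to verify that $T$ itself satisfies the structural conditions \eqref{gqb-1}--\eqref{gqb-3}, but these are inherited because $T$ is a Brownian-type operator by hypothesis (it is already written in the required block form with the constraints), so in fact (i) is essentially automatic once $S$ is seen to be subnormal. The substantive direction is (i)$\Rightarrow$(iii). Here I would fix a minimal normal extension $N\in\ogr{\kk_2}$ of $S$. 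Using \eqref{reotp}, the space $\overline{\ob{|E|}}$ reduces $S$, and since $|E|$ commutes with $S$, by the lifting machinery recalled in the excerpt (the ``lift relative to $\{N\}'$'' from \cite{Br55}, together with \eqref{cioka} and \eqref{luftr}) there is a unique positive operator $F\in\{N\}'\cap\{P\}'$ with $|E|\subseteq F$; positivity of $F$ follows since $|E|\Ge 0$ and taking lifts is a $*$-homomorphism-like operation compatible with the order (one can see $F\Ge0$ via $F = \mathrm{s.o.t.}\text{-}\lim$ of $N^{*n}|E|N^n$-type expressions, or more cleanly by noting the lift of a positive operator is positive because $F = G^*G$ where $G$ is the lift of $|E|^{1/2}$). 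I then set $|\tilde E| := F$, which immediately gives claim (a). Next I must manufacture the actual $(1,2)$ entry $\tilde E$ and the isometry $\tilde V$. Writing $E = W|E|$ for the partial isometry $W$ from the polar decomposition (with $\ob W = \overline{\ob E}\subseteq\hh_1$, $\jd W = \jd{|E|}$), I want $\tilde E = \tilde W |\tilde E|$ for a suitable partial isometry $\tilde W:\kk_2\to\kk_1$ extending $W$ with initial space $\overline{\ob{|\tilde E|}}$; the constraint $\tilde V^*\tilde E = 0$ from \eqref{gqb-2} forces $\ob{\tilde W}\perp\ob{\tilde V}$, and the constraint \eqref{gqb-3b}, $N|\tilde E| = |\tilde E| N$, is already guaranteed since $|\tilde E| = F\in\{N\}'$.

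The main obstacle, and the heart of the construction, is choosing $\kk_1$ and the isometry $\tilde V\in\ogr{\kk_1}$ so that simultaneously: $T\preceq\big[\begin{smallmatrix}\tilde V&\tilde E\\0&N\end{smallmatrix}\big]$ (so $\tilde V\supseteq V$, $\tilde E\supseteq E$), the orthogonality $\tilde V^*\tilde E=0$ holds, the dimension bookkeeping $\dim\kk_j=\dim\hh_j$ works out, and the prescribed defect space identity \eqref{rozkly} is met. The plan is to build $\kk_1$ as $\hh_1$ plus a ``collar'' accounting for the enlargement $\kk_2\supsetneq\hh_2$: concretely, decompose $\kk_2 = \hh_2\oplus\mcal$ where $\mcal = \kk_2\ominus\hh_2$, note $\overline{\ob{|\tilde E|}}$ may stick out of $\hh_2$ into $\mcal$, and set $\kk_1 = \hh_1 \oplus \ncal$ where $\ncal$ carries $\tilde W$ restricted to the new part of the initial space together with room for $\tilde V$ to act as a shift-type isometry on the complement. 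I would define $\tilde V$ on $\hh_1$ to agree with $V$, and on $\ncal$ to be an isometry whose range, together with $\ob V\oplus\overline{\ob{\tilde E}}$, has orthogonal complement in $\kk_1$ exactly equal to $\dcal$ — this is possible precisely because $\dcal\subseteq\hh_1\ominus(\ob V\vee\overline{\ob E})$ and the dimension of the ``wandering'' part of the new isometry is $\dim\overline{\ob{|E|}}$-controlled via \eqref{tres}, so one can arrange $\dim\kk_1 = \dim\hh_1$ (infinite-dimensional housekeeping, or finite exact matching). That $V^*E=0$ lifts to $\tilde V^*\tilde E=0$ is where the compatibility of the collar with the lift $F$ must be checked: $\ob{\tilde E} = \tilde W(\overline{\ob F})$ and one needs this orthogonal to $\ob{\tilde V}$; since $\ob E\perp\ob V$ already, and the new directions of $\ob{\tilde E}$ lie in $\ncal$ by construction, one chooses $\ob{\tilde V}\cap\ncal$ to avoid them. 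Finally, claim (b) follows from \eqref{luinw}: $E$ is injective/left-invertible/isometric iff $|E|$ is injective/invertible/the identity on $\overline{\ob{|E|}}$, and since $|\tilde E| = F$ is the lift of $|E|$ and $F|_{\mathcal H_2} = |E|$ with $F\in\{N\}'$, these spectral properties transfer — e.g. $|E|$ invertible forces $F$ bounded below on $\hh_1$ hence, being in $\{N\}'\cap\{P\}'$, bounded below everywhere, giving $\tilde E$ left-invertible; the isometric case uses that the lift of $I_{\hh_2}$ (restricted appropriately) is $I_{\overline{\ob F}}$.
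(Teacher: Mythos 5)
Your implication (ii)$\Rightarrow$(i) contains the main gap. You write that the structural conditions \eqref{gqb-1}--\eqref{gqb-3} are ``inherited because $T$ is a Brownian-type operator by hypothesis''. But the theorem does not assume this: $T$ is merely an upper triangular block operator, and membership in $\gsbh$ is exactly what condition (i) asserts and what has to be proved. Conditions \eqref{gqb-1} and \eqref{gqb-2} do pass down from $\tilde V$ and $\tilde E$, but the commutation \eqref{gqb-3}, i.e.\ $S|E|=|E|S$, is not automatic from $E\subseteq\tilde E$; it is obtained precisely by combining $N|\tilde E|=|\tilde E|N$ with the hypothesis $|E|\subseteq|\tilde E|$ (for $h\in\hh_2$ one has $S|E|h=N|\tilde E|h=|\tilde E|Nh=|E|Sh$, using that $Sh\in\hh_2$). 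Since $E\subseteq\tilde E$ does not imply $|E|\subseteq|\tilde E|$, this is the one place where \eqref{pruypec} is indispensable; the paper emphasizes (Remark~\ref{aabbaa}, Example~\ref{rozwis}) that (ii) without \eqref{pruypec} does not imply (i). Declaring this step ``essentially automatic'' misses the content of the equivalence.

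For (i)$\Rightarrow$(iii) your plan is the same in spirit as the paper's Basic Construction: lift $|E|$ to a positive $F\in\{N\}'\cap\{P\}'$ via the Bram commutant lifting theorem, set $\tilde E$ equal to an isometric factor extending the polar factor of $E$ composed with $F$, and enlarge $\hh_1$ by a collar on which $\tilde V$ acts. But two points you wave at need real arguments. First, when $E$ is not injective the range bookkeeping changes ($\overline{\ob{\tilde E}}$ is then strictly smaller than the range of the isometric factor, so the defect space identity \eqref{rozkly} cannot be read off directly); the paper handles this by splitting $\hh_2=\jd{E}\oplus\overline{\ob{|E|}}$, treating the injective part first and invoking Lemma~\ref{simurt}. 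Second, the equality $\dim\kk_1=\dim\hh_1$ is not mere housekeeping: it rests on the observation that $\dim\kk_2\ominus\overline{\ob{|E|}}\Ge\aleph_0$ unless $S$ is normal (Corollary~\ref{minfik}(ii)), with the normal case treated separately (Lemma~\ref{nirmun}). Finally, in (b) your left-invertibility transfer (``$F$ bounded below on the small space, hence bounded below everywhere since $F\in\{N\}'\cap\{P\}'$'') is not a proof; one needs that the Bram map is a unital $*$-isomorphism of von Neumann algebras and therefore preserves invertibility, as in \cite[Theorem~11.29]{Rud73}.
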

It is worth noting that the condition (ii) of
Theorem~\ref{etrest} in which \eqref{pruypec} is dropped may
not imply (i) (see Remark~\ref{aabbaa} and
Example~\ref{rozwis}). We also refer the reader to
Example~\ref{tryisu} for a discussion of the possible
dimensions of the defect spaces $\hh_1 \ominus \big(\ob{V}
\oplus \overline{\ob{E}}\big)$ and $\kk_1 \ominus (\ob{\tilde
V} \oplus \overline{\ob{\tilde E}})$.
   \begin{cor}
Let $\big[\begin{smallmatrix} V & E \\
0 & S \end{smallmatrix}\big] \in \gsbh$
and let $N\in \ogr{\kk_2}$ be a minimal
normal extension of $S$. Then there exist
a Hilbert space $\kk_1$ and operators
$\tilde V\in \ogr{\kk_1}$ and $\tilde
E\in \ogr{\kk_2,\kk_1}$ such that
$\big[\begin{smallmatrix} \tilde V &
\tilde E \\ 0 & N
\end{smallmatrix}\big] \in \gnbhh{\kk_1,
\kk_2}$,  $\big[\begin{smallmatrix} V & E \\
0 & S \end{smallmatrix}\big] \preceq
\big[\begin{smallmatrix} \tilde V &
\tilde E \\ 0 & N
\end{smallmatrix}\big]$, \eqref{pruypec}
holds, $\dim \kk_j = \dim \hh_j$ for $j=1,2$ and
$\kk_1 = \ob{\tilde V} \oplus \overline{\ob{\tilde
E}}$.
   \end{cor}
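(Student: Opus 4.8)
The plan is to derive this corollary directly from Theorem~\ref{etrest} by specializing the auxiliary subspace $\dcal$ to $\{0\}$. First I would note that $\{0\}$ is trivially a closed subspace of $\hh_1 \ominus \big(\ob{V} \vee \overline{\ob{E}}\big)$ (here it may be convenient to recall that, by \eqref{gqb-2}, one has $V^*E = 0$, so that $\overline{\ob{E}} \perp \ob{V}$ and the join $\ob{V} \vee \overline{\ob{E}}$ coincides with the orthogonal sum $\ob{V} \oplus \overline{\ob{E}}$; this is not strictly needed for the argument but clarifies the geometry). Since the operator $\big[\begin{smallmatrix} V & E \\ 0 & S \end{smallmatrix}\big]$ is assumed to be in $\gsbh$, condition~(i) of Theorem~\ref{etrest} holds, and hence so does condition~(iii) with the above choice of $\dcal$.

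Next I would invoke condition~(iii) for the given minimal normal extension $N \in \ogr{\kk_2}$ of $S$. This immediately yields a Hilbert space $\kk_1$ together with operators $\tilde V \in \ogr{\kk_1}$ and $\tilde E \in \ogr{\kk_2,\kk_1}$ such that $\big[\begin{smallmatrix} \tilde V & \tilde E \\ 0 & N \end{smallmatrix}\big] \in \gnbhh{\kk_1,\kk_2}$, $\big[\begin{smallmatrix} V & E \\ 0 & S \end{smallmatrix}\big] \preceq \big[\begin{smallmatrix} \tilde V & \tilde E \\ 0 & N \end{smallmatrix}\big]$, \eqref{pruypec} holds, and $\dim \kk_j = \dim \hh_j$ for $j = 1,2$. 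It then remains only to read \eqref{rozkly} with $\dcal = \{0\}$: it says $\{0\} = \kk_1 \ominus (\ob{\tilde V} \oplus \overline{\ob{\tilde E}})$, which is precisely the asserted decomposition $\kk_1 = \ob{\tilde V} \oplus \overline{\ob{\tilde E}}$.

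I do not anticipate any genuine difficulty in this argument: the full strength of the corollary is already built into Theorem~\ref{etrest}(iii), so the only ``work'' is to instantiate $\dcal$ at the zero subspace and to rewrite the orthogonal-complement identity \eqref{rozkly} as the direct-sum identity claimed in the statement. If anything, the one point worth double-checking is that the hypotheses of Theorem~\ref{etrest} are met verbatim, namely that $\{0\}$ qualifies as a ``closed subspace of $\hh_1 \ominus (\ob{V} \vee \overline{\ob{E}})$'', which is obvious.
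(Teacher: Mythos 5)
Your proof is correct and is exactly the argument the paper intends: the corollary is stated as an immediate consequence of Theorem~\ref{etrest}(iii) with $\dcal=\{0\}$, and rewriting \eqref{rozkly} as $\kk_1 = \ob{\tilde V} \oplus \overline{\ob{\tilde E}}$ is all that is needed. (The paper gives no separate proof, leaving precisely this instantiation to the reader.)
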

The next corollary follows from Theorem~\ref{etrest},
\cite[Theorem~4.1]{A-C-J-S} and \cite[Proposition~6.2]{SF70}.
   \begin{cor}
Any quasi-Brownian isometry has an
entrywise extension to a Brownian
isometry.
   \end{cor}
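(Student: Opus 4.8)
The final statement is the corollary asserting that any quasi-Brownian isometry admits an entrywise extension to a Brownian isometry. The plan is to reduce this to Theorem~\ref{etrest} by first putting the quasi-Brownian isometry into the normal form of a Brownian-type operator of class $\mathscr{S}$, then applying the lifting theorem, and finally identifying the resulting Brownian-type operator of class $\mathscr{N}$ as a genuine Brownian isometry.

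First I would dispose of the isometric case: if $T$ is itself an isometry, it is trivially a Brownian isometry (the defect condition $\triangle_T\triangle_{T^*}\triangle_T=0$ holds vacuously since $\triangle_T=0$), and it is its own entrywise extension in any block decomposition (e.g.\ with $E=0$). So assume $T$ is a non-isometric quasi-Brownian isometry. By \cite[Theorem~4.1]{A-C-J-S}, $T$ has an upper triangular $2\times 2$ block matrix representation $T=\big[\begin{smallmatrix} V & E \\ 0 & X \end{smallmatrix}\big]$ with respect to a decomposition $\hh=\hh_1\oplus\hh_2$ satisfying \eqref{gqb-1}--\eqref{gqb-3}, where additionally $X$ is an isometry (quasi-Brownian isometries are of class $\mathscr{I}$). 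Since every isometry is subnormal, $T\in\gsbh$, so condition (i) of Theorem~\ref{etrest} holds (with $S:=X$).

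Next I would apply Theorem~\ref{etrest}, choosing $\dcal=\{0\}$, to obtain Hilbert spaces $\kk_1,\kk_2$ and an operator $\big[\begin{smallmatrix} \tilde V & \tilde E \\ 0 & N \end{smallmatrix}\big]\in\gnbhh{\kk_1,\kk_2}$ with $T\preceq\big[\begin{smallmatrix} \tilde V & \tilde E \\ 0 & N \end{smallmatrix}\big]$, where $N=\mathrm{mne}\,S$ and $|E|\subseteq|\tilde E|$. Because $S=X$ is an isometry, its minimal normal extension $N$ is unitary (the minimal normal extension of an isometry is unitary — indeed $N^*N=I$ extends, and minimality forces $N$ normal with $N^*N=I$, hence unitary). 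By Theorem~\ref{etrest}(b), since $E$ is isometric, $\tilde E$ is isometric as well; thus $\tilde E^*\tilde E=I$, so $|\tilde E|=I$ and in particular $|\tilde E|$ is injective. I then need to verify that $\tilde T:=\big[\begin{smallmatrix} \tilde V & \tilde E \\ 0 & N \end{smallmatrix}\big]$ is a Brownian isometry. It is of class $\mathscr{N}$ with unitary $(2,2)$ entry; moreover, since $\tilde E$ is isometric and $|\tilde E|=I$, one checks that $\tilde T$ is in fact of class $\mathscr{U}$ (a Brownian-type operator with unitary $(2,2)$ entry and isometric $(1,2)$ entry). Now by \cite[Proposition~6.2]{SF70} — or by the characterization recalled in the third remark after Definition~\ref{defq} — a non-isometric operator is a Brownian isometry if and only if it is of class $\mathscr{U}$; hence $\tilde T$ is a Brownian isometry (if $\tilde T$ happens to be isometric it is trivially one). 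Since $T\preceq\tilde T$, this produces the desired entrywise extension.

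The main obstacle is the bookkeeping needed to confirm that the class-$\mathscr{N}$ operator produced by Theorem~\ref{etrest} really lands in class $\mathscr{U}$ and therefore qualifies as a Brownian isometry: one must check that the minimal normal extension of the isometry $S$ is unitary and that the conditions \eqref{gqb-1}--\eqref{gqb-3} for $\tilde T$, together with $\tilde E$ being isometric (from part (b)), force $N$ to be the $(2,2)$ entry of a Brownian unitary rather than merely a Brownian-type operator of class $\mathscr{N}$. Once that is in place, the citation to \cite[Proposition~6.2]{SF70} closes the argument. Everything else is a direct chaining of the quoted results.
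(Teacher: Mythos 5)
Your route is exactly the one the paper intends: the corollary is stated there with the one-line justification that it follows from Theorem~\ref{etrest}, the block representation of quasi-Brownian isometries in \cite[Theorem~4.1]{A-C-J-S}, and \cite[Proposition~6.2]{SF70} (the fact that the minimal normal, equivalently minimal unitary, extension of an isometry is unitary), and that is precisely the chain you assemble. One caveat: your assertion that $E$ is isometric is false in general --- for a quasi-Brownian isometry the $(1,2)$ entry $E$ produced by \cite[Theorem~4.1]{A-C-J-S} is only required to satisfy $V^*E=0$ and $XE^*E=E^*EX$ (already a Brownian unitary of covariance $\sigma\neq 1$ has $E=\sigma Y$ non-isometric) --- so the appeal to Theorem~\ref{etrest}(b) and the conclusion $|\tilde E|=I$ are unjustified. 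Fortunately this step is also unnecessary: membership in $\mathscr{U}_{\kk_1,\kk_2}$ requires nothing of $\tilde E$ beyond the Brownian-type conditions, which Theorem~\ref{etrest} already supplies, so the argument closes as soon as you know $N=\mathrm{mne}\,S$ is unitary and invoke the characterization of non-isometric Brownian isometries as operators of class $\mathscr{U}$. Deleting the isometricity claim leaves a correct proof identical in substance to the paper's.
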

In the present paper, we are mostly interested in
studying entrywise extensions
$\big[\begin{smallmatrix} \tilde V & \tilde E \\ 0 & N
\end{smallmatrix}\big] \in
\gnbhh{\kk_1, \kk_2}$ of $T=\big[\begin{smallmatrix} V & E \\
0 & S \end{smallmatrix}\big] \in \gsbh$ satisfying
\eqref{pruypec} for which $N=\mathrm{mne}\, S$.
Occasionally, we will call them {\em taut} entrywise
extensions. The next theorem tells us how to model
taut entrywise extensions of $T$ in case the entry $E$
of $T$ is injective.
   \begin{thm} \label{mudyl}
Suppose that $T=\big[\begin{smallmatrix} V & E \\
0 & S \end{smallmatrix}\big] \in\gsbh$, $E$ is
injective and $N\in \ogr{\kk_2}$ is a minimal normal
extension of $S$. Let $E=W|E|$ be the polar
decomposition of $E$. Then $|E|$ is injective, $W$ is
an isometry and the following hold{\em :}
   \begin{enumerate}
   \item[(i)] if  $\big[\begin{smallmatrix} \tilde V & \tilde E
\\ 0 & N \end{smallmatrix}\big] \in \gnbhh{\kk_1,
\kk_2}$ is an entrywise extension of $T$ such that
$|E| \subseteq |\tilde E|$, and $\tilde E=U|\tilde E|$
is the polar decomposition of $\tilde E$, then $\tilde
E$ is injective, $|\tilde E|$ is the lift of $|E|$
relative to $\{N\}'$ and $U\in \ogr{\kk_2,\kk_1}$ is
an isometry such that $W \subseteq U$ and $\ob{\tilde
V} \perp \ob{U}$,
   \item[(ii)] if $\tilde V \in \ogr{\kk_1}$ and $U\in
\ogr{\kk_2,\kk_1}$ are isometries such that
$V\subseteq \tilde V$, $W \subseteq U$ and $\ob{\tilde
V} \perp \ob{U}$, and $B\in \ogr{\kk_2}$ is the lift
of $|E|$ relative to $\{N\}'$, then
$\big[\begin{smallmatrix} \tilde V & \tilde E \\
0 & N \end{smallmatrix}\big] \in \gnbhh{\kk_1, \kk_2}$
with $\tilde E:=UB$, $\big[\begin{smallmatrix} V & E
\\ 0 & S
\end{smallmatrix}\big] \preceq \big[\begin{smallmatrix}
\tilde V & \tilde E \\ 0 & N \end{smallmatrix}\big]$,
$|E| \subseteq |\tilde E|$ and $\tilde E=UB$ is the
polar decomposition of $\tilde{E}$.
   \end{enumerate}
   \end{thm}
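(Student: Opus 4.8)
The plan is to reduce everything to statements about the entries and to exploit the polar decomposition together with the theory of lifts relative to $\{N\}'$. First I would verify the preliminary assertions. Since $T\in\gsbh$, condition \eqref{gqb-2} gives $V^*E=0$; hence for $h\in\hh_2$, $\|Eh\|^2=\|E\|^2$-type computations are irrelevant, but the key point is that $E$ injective together with $E=W|E|$ being the polar decomposition forces $\jd{|E|}=\jd{E}=\{0\}$, so $|E|$ is injective and $W|_{\overline{\ob{|E|}}}$ is isometric; since $\overline{\ob{|E|}}=\hh_2$ by injectivity and self-adjointness, $W$ is an isometry on all of $\hh_2$. By \eqref{reotp} (or directly from \eqref{gqb-3b}), $|E|$ commutes with $S$, i.e. $|E|\in\{S,S^*\}'$ (self-adjointness plus Fuglede), so the lift $B$ of $|E|$ relative to $\{N\}'$ exists and is the unique element of $\{N\}'\cap\{P\}'$ extending $|E|$, where $P$ is the projection of $\kk_2$ onto $\hh_2$. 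Note $B\Ge0$ because $B^*$ is the lift of $|E|^*=|E|$ by \eqref{luftr}, so $B=B^*$, and positivity passes to the lift (for $h\in\hh_2$, $\langle B N^{*n}h, N^{*n}h\rangle$ is controlled via \eqref{cioka} and a standard positivity argument, or simply: $B$ commutes with $N$, $N=\mathrm{mne}\,S$, and $B$ agrees with the positive operator $|E|$ on the generating subspace — one uses that $\sqrt{B}$ also lifts $\sqrt{|E|}$).

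For part (i): suppose $\big[\begin{smallmatrix}\tilde V&\tilde E\\0&N\end{smallmatrix}\big]\in\gnbhh{\kk_1,\kk_2}$ is an entrywise extension of $T$ with $|E|\subseteq|\tilde E|$. The defining relation \eqref{gqb-3b} for the extension gives $N|\tilde E|=|\tilde E|N$; restricting the $(2,2)$ entry relation and using that $\kk_1\oplus\kk_2\supseteq\hh_1\oplus\hh_2$ entrywise, I would check that $|\tilde E|$ belongs to $\{N\}'\cap\{P\}'$ — commutation with $N$ is \eqref{gqb-3b} for the big matrix; commutation with $P$ follows because $|\tilde E|$ extends $|E|$ which maps $\hh_2$ into $\hh_2$, combined with $N$-commutation and the minimality $\kk_2=\bigvee N^{*n}\hh_2$ from \eqref{mynipu} (this is exactly the argument behind \eqref{cioka} and the uniqueness of the lift). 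Hence $|\tilde E|$ is the lift of $|E|$; in particular it is uniquely determined, so $|\tilde E|=B$. Injectivity of $\tilde E$: since $|\tilde E|=B\Ge0$ extends the injective $|E|$ and commutes with $N$, its kernel $\jd{B}$ is $N$-reducing and orthogonal to $\hh_2$, but $\kk_2=\bigvee N^{*n}\hh_2$ forces $\jd{B}=\{0\}$; then \eqref{luinw} gives $\tilde E$ injective and $U$ isometric on $\overline{\ob{|\tilde E|}}=\kk_2$. That $W\subseteq U$: for $h\in\hh_2$, $Wh = $ (polar decomposition of $E$ on $\ob{|E|}$ which is dense in $\hh_2$) while $Uh$ is read off from $\tilde Eh=Eh=W|E|h=W|\tilde E|h$ and $\tilde E h = U|\tilde E|h$, so $U$ and $W$ agree on $\ob{|\tilde E|}|_{\hh_2}=\ob{|E|}$, dense in $\hh_2$; by continuity $W\subseteq U$. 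Finally $\ob{\tilde V}\perp\ob{U}$: condition \eqref{gqb-2} for the big matrix says $\tilde V^*\tilde E=0$, i.e. $\tilde V^*U|\tilde E|=0$, and since $\ob{|\tilde E|}=\ob B$ is dense in $\kk_2$ we get $\tilde V^*U=0$, which is exactly $\ob U\perp\ob{\tilde V}$.

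For part (ii): given isometries $\tilde V\in\ogr{\kk_1}$, $U\in\ogr{\kk_2,\kk_1}$ with $V\subseteq\tilde V$, $W\subseteq U$, $\ob{\tilde V}\perp\ob U$, and $B\in\ogr{\kk_2}$ the lift of $|E|$, set $\tilde E:=UB$ and check the three defining conditions of $\gnbhh{\kk_1,\kk_2}$. Condition \eqref{gqb-1} is the hypothesis that $\tilde V$ is an isometry. Condition \eqref{gqb-2}: $\tilde V^*\tilde E=\tilde V^* U B=0$ because $\tilde V^*U=0$ (from $\ob{\tilde V}\perp\ob U$). Condition \eqref{gqb-3b}: $N|\tilde E|=|\tilde E|N$; here $|\tilde E|=|UB|=B$ since $U$ is isometric and $B\Ge0$ (so $\tilde E=UB$ is genuinely the polar decomposition), and $B$ commutes with $N$ because $B$ is the lift relative to $\{N\}'$. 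Then $\big[\begin{smallmatrix}\tilde V&\tilde E\\0&N\end{smallmatrix}\big]\in\gnbhh{\kk_1,\kk_2}$. The entrywise extension claim $\big[\begin{smallmatrix}V&E\\0&S\end{smallmatrix}\big]\preceq\big[\begin{smallmatrix}\tilde V&\tilde E\\0&N\end{smallmatrix}\big]$ reduces, via Definition~\ref{enrty}, to checking $V\subseteq\tilde V$ (hypothesis), $S\subseteq N$ (hypothesis), and $E\subseteq\tilde E$: for $h\in\hh_2$, $\tilde Eh=UBh=U|E|h=W|E|h=Eh$ using $B\supseteq|E|$ and $U\supseteq W$. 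Then $|E|\subseteq|\tilde E|=B$ is immediate, and we already identified $\tilde E=UB$ as the polar decomposition.

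The main obstacle I expect is the rigorous identification, in part (i), of $|\tilde E|$ as \emph{the} lift of $|E|$ relative to $\{N\}'$ — specifically proving $|\tilde E|\in\{P\}'$, i.e. that $\hh_2$ reduces $|\tilde E|$. One must not assume this; it has to be extracted from the structural relations of the big Brownian-type matrix ($N|\tilde E|=|\tilde E|N$) together with the minimality $\kk_2=\bigvee_{n\Ge0}N^{*n}\hh_2$ and the fact that $|\tilde E|$ leaves $\hh_2$ invariant (because it agrees with $|E|$ there), which then propagates to $N^{*n}\hh_2$ by the commutation, yielding invariance of a dense subspace and hence $P$-commutation by self-adjointness. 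This is precisely the mechanism underlying \eqref{cioka} and the uniqueness statement for lifts recalled in the introduction, so once it is invoked correctly the rest is bookkeeping with polar decompositions and \eqref{luinw}.
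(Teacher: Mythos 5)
Your argument is correct and follows essentially the same route as the paper: part (i) rests on identifying $|\tilde E|$ as the lift of $|E|$ (the paper simply cites Theorem~\ref{etrest}(a)--(b) here, while you re-derive the needed facts --- $|\tilde E|\in\{N\}'\cap\{P\}'$ via \eqref{gqb-3b} and self-adjointness, injectivity via a reducing-kernel/minimality argument in place of the paper's Lemma~\ref{inyec}), and part (ii) is the same direct verification of \eqref{gqb-1}--\eqref{gqb-3b} with $|\tilde E|=B$. The one spot you gloss over is in (ii): the assertion that $\tilde E=UB$ \emph{is} the polar decomposition requires $\jd{B}=\{0\}$ (a full isometry $U$ can only be the partial-isometry factor when $\overline{\ob{B}}=\kk_2$); this does follow from the injectivity of $|E|$ by exactly the kernel-reduction argument you give in (i) (the paper instead invokes Lemma~\ref{inyec} and \eqref{pusyya}), but it should be invoked explicitly at that point. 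Also note that $P$-commutation of $|\tilde E|$ in (i) needs no minimality or propagation along $N^{*n}\hh_2$: a self-adjoint operator leaving the closed subspace $\hh_2$ invariant automatically reduces it.
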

The third theorem shows that the second column of any
taut entrywise extension of
$\big[\begin{smallmatrix} V & E \\
0 & S \end{smallmatrix}\big] \in\gsbh$ can always be
decomposed in two parts, one of which has an injective upper
right entry and the other is block-diagonal.
   \begin{thm} \label{mude}
Suppose that  $\big[\begin{smallmatrix} \tilde V & \tilde E \\
0 & N \end{smallmatrix}\big] \in \gnbhh{\kk_1, \kk_2}$
is an entrywise extension of
$T=\big[\begin{smallmatrix} V & E \\
0 & S \end{smallmatrix}\big] \in\gsbh$ satisfying
\eqref{pruypec}, where $N\in \ogr{\kk_2}$ is a minimal
normal extension of $S$. Set $\hh_{21}=\jd{E}$ and
$\hh_{22}=\overline{\ob{|E|}}$. Let
$E_j:=E|_{\hh_{2j}}\colon \hh_{2j}\to \hh_1$ for
$j=1,2$. Then the following statements hold{\em :}
   \begin{enumerate}
   \item[(a)] $\hh_{2j}$ reduces $S$ and
$S_j:=S|_{\hh_{2j}}$ is subnormal for
$j=1,2$,
   \item[(b)] $\hh_2=\hh_{21} \oplus \hh_{22}$ and  $S=S_1
\oplus S_2$,
   \item[(c)] $E(f_1\oplus f_2)=E_1f_1 + E_2f_2$
for $f_j\in \hh_{2j}$, $E_1=0$ and $E_2$
is injective.
   \end{enumerate}
Next, let $N=N_1\oplus N_2$ be the
orthogonal decomposition of $N$ with
respect to an orthogonal decomposition
$\kk_2=\kk_{21}\oplus \kk_{22}$ such that
$N_j=\mathrm{mne}\, S_j$ for $j=1,2$
$($see Lemma~{\em \ref{simurt}}$)$. For
$j=1,2$, let $\tilde E_j:=\tilde
E|_{\kk_{2j}}\colon \kk_{2j} \to \kk_1$.
Then for $j=1,2$,
   \begin{enumerate}
   \item[(d)] $\big[\begin{smallmatrix} V & E_j \\
0 & S_j \end{smallmatrix}\big] \in
\gsbhh{\hh_1, \hh_{2j}}$ and
$\big[\begin{smallmatrix} \tilde V &
\tilde E_j
\\ 0 & N_j \end{smallmatrix}\big] \in
\gnbhh{\kk_1, \kk_{2j}}$,
   \item[(e)]
$\big[\begin{smallmatrix} V & E_j \\
0 & S_j \end{smallmatrix}\big] \preceq
\big[\begin{smallmatrix} \tilde V &
\tilde E_j
\\ 0 & N_j \end{smallmatrix}\big]$,
   \item[(f)] $|\tilde E_j|$ and $|\tilde E|$
are the lifts of $|E_j|$ and $|E|$
relative to $\{N_j\}^{\prime}$ and
$\{N\}^{\prime}$, respectively{\em ;} in
particular, $|E_j| \subseteq |\tilde
E_j|$,
   \item[(g)] $\tilde E_1=0$ and $\tilde E_2$ is injective.
   \end{enumerate}
   \end{thm}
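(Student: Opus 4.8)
The plan is to reduce every assertion to the behaviour of $|E|$ and of its lift relative to $\{N\}'$, transporting structure along the orthogonal decompositions furnished by Lemma~\ref{simurt}. First I would dispose of (a)--(c). The standard decomposition $\hh_2=\jd{E}\oplus\overline{\ob{E^*}}$ rewrites, since $\jd{E}=\jd{|E|}$ and $\overline{\ob{E^*}}=\overline{\ob{|E|}}$, as $\hh_2=\hh_{21}\oplus\hh_{22}$, which is the first half of (b); then $E_1=0$ trivially, $E_2$ is injective (as $\jd{E_2}\subseteq\hh_{22}\cap\jd{E}=\{0\}$), and the formula for $E$ in (c) is immediate. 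By \eqref{reotp}, $\hh_{22}=\overline{\ob{|E|}}$ reduces $S$, hence so does $\hh_{21}=\hh_{22}^{\perp}$; this gives $S=S_1\oplus S_2$ and exhibits each $S_j=S|_{\hh_{2j}}$ as a restriction of the subnormal operator $S$ to a reducing subspace, so $S_j$ is subnormal -- this is (a). (If one of the $\hh_{2j}$ is $\{0\}$ the corresponding statements are vacuous, so I assume $\hh_{21},\hh_{22}\ne\{0\}$.) Lemma~\ref{simurt} then provides $\kk_2=\kk_{21}\oplus\kk_{22}$ with $\hh_{2j}\subseteq\kk_{2j}$, $N=N_1\oplus N_2$, $N_j=\mathrm{mne}\,S_j$, and, by \eqref{mynipu}, $\kk_{2j}=\bigvee_{n\Ge 0}N^{*n}\hh_{2j}$.

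The crucial step is to show that each $\kk_{2j}$ reduces $|\tilde E|$. Since $N=\mathrm{mne}\,S$ and \eqref{pruypec} holds, the ``moreover'' part of Theorem~\ref{etrest} applies, so $|\tilde E|$ is the lift of $|E|$ relative to $\{N\}'$; in particular $|E|\in\{S,S^*\}'$, $|\tilde E|\in\{N\}'\cap\{P\}'$ with $P$ the projection of $\kk_2$ onto $\hh_2$, $|E|\subseteq|\tilde E|$, and by \eqref{cioka} $|\tilde E|N^{*n}h=N^{*n}|E|h$ for $h\in\hh_2$ and $n\Ge 0$. As $\hh_{22}=\overline{\ob{|E|}}$ reduces the self-adjoint operator $|E|$, the right-hand side lies in $N^{*n}\hh_{22}\subseteq\kk_{22}$ whenever $h\in\hh_{22}$; these vectors span $\kk_{22}$ and $|\tilde E|$ is bounded, so $|\tilde E|\kk_{22}\subseteq\kk_{22}$, and then $|\tilde E|\kk_{21}\subseteq\kk_{21}$ by self-adjointness. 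Hence $\kk_{2j}$ reduces $|\tilde E|$; using that $|\tilde E|$ commutes with $P_{\kk_{2j}}$ one identifies $\tilde E_j^*\tilde E_j=(|\tilde E||_{\kk_{2j}})^2$, i.e. $|\tilde E_j|=|\tilde E||_{\kk_{2j}}$, and similarly $|E_j|=|E||_{\hh_{2j}}$.

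The rest is bookkeeping with reducing subspaces. For (d): $V$ and $\tilde V$ remain isometries, $V^*E_j=(V^*E)|_{\hh_{2j}}=0$ and likewise $\tilde V^*\tilde E_j=0$, while $S_j|E_j|=|E_j|S_j$ and $N_j|\tilde E_j|=|\tilde E_j|N_j$ follow by restricting $S|E|=|E|S$ and $N|\tilde E|=|\tilde E|N$ to $\hh_{2j}$ and $\kk_{2j}$; with $S_j$ subnormal and $N_j$ normal, this yields the membership assertions. Part (e) amounts to checking $\tilde E_jf=\tilde Ef=Ef=E_jf$ and $N_jf=Nf=Sf=S_jf$ for $f\in\hh_{2j}$, the $(1,1)$ and $(2,1)$ entries agreeing trivially. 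For (f): $|\tilde E_j|$ commutes with $N_j$ (just shown) and with $P_j$ -- here I would verify, from $\hh_{2i}\subseteq\kk_{2i}$ and $\kk_{21}\perp\kk_{22}$, that $\kk_{2j}$ reduces $P$ and $\hh_2\cap\kk_{2j}=\hh_{2j}$, so that $P|_{\kk_{2j}}=P_j$ and the commutation with $P$ restricts -- and $|\tilde E_j|\supseteq|E_j|$ since $|\tilde E|\supseteq|E|$; as $|E_j|\in\{S_j,S_j^*\}'$ (restrict $|E|\in\{S,S^*\}'$) and $N_j=\mathrm{mne}\,S_j$, uniqueness of the lift shows that $|\tilde E_j|$ is the lift of $|E_j|$ relative to $\{N_j\}'$, and in particular $|E_j|\subseteq|\tilde E_j|$.

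Finally (g): since $|E_1|=|E||_{\jd{E}}=0$, its lift relative to $\{N_1\}'$ is $0$, so $|\tilde E_1|=0$ and hence $\tilde E_1=0$; and by (d)--(f) the block operator $\big[\begin{smallmatrix}\tilde V&\tilde E_2\\0&N_2\end{smallmatrix}\big]$ is a taut entrywise extension of $\big[\begin{smallmatrix}V&E_2\\0&S_2\end{smallmatrix}\big]\in\gsbhh{\hh_1,\hh_{22}}$ with $N_2=\mathrm{mne}\,S_2$ and $E_2$ injective, so Theorem~\ref{etrest}(b) gives that $\tilde E_2$ is injective. I expect the main obstacle to be precisely the crucial step: reconciling the abstract decomposition $\kk_2=\kk_{21}\oplus\kk_{22}$ coming from Lemma~\ref{simurt} with the operator $|\tilde E|$; once $\kk_{2j}$ is known to reduce $|\tilde E|$, everything else follows by restricting already-established relations to reducing subspaces and invoking the ``moreover'' clauses of Theorem~\ref{etrest}.
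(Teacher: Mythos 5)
Your proposal is correct and follows essentially the same route as the paper: decompose $\hh_2$ via \eqref{reotp}, show $\kk_{2j}$ reduces $|\tilde E|$, identify $|\tilde E_j|=|\tilde E||_{\kk_{2j}}$ as the lift of $|E_j|$ relative to $\{N_j\}'$, and then read off (d)--(g) from Theorem~\ref{etrest}(a),(b). The only cosmetic difference is that you re-derive the reducing property of $\kk_{2j}$ for the lift directly from \eqref{cioka}, whereas the paper obtains it by citing Lemma~\ref{simurt}(ii) applied to $|E|=|E_1|\oplus|E_2|$.
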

In the following remark, we show what can happen when the
first or the second column of a taut entrywise extension of
$\big[\begin{smallmatrix} V & E \\
0 & S \end{smallmatrix}\big] \in \gsbh$ is extended
entrywise.
   \begin{rem} \label{cysyk}
Suppose $T:=\big[\begin{smallmatrix} V & E \\
0 & S \end{smallmatrix}\big] \in \gsbh$,
$N\in \ogr{\kk_2}$ is a minimal normal
extension of $S$ and $\dcal$ is a closed
subspace of $\hh_1 \ominus \big(\ob{V}
\oplus \overline{\ob{E}}\big)$. Let
$\big[\begin{smallmatrix} \tilde V &
\tilde E \\ 0 & N
\end{smallmatrix}\big] \in \gnbhh{\kk_1,
\kk_2}$ be an entrywise extension of $T$
such that \eqref{pruypec} and
\eqref{rozkly} hold.

First, we consider entrywise extensions relative to
the first column. Take any Hilbert space $\lcal$ and
define the Hilbert space $\mcal_1$ and the operators
$\tilde U \in \ogr{\mcal_1}$ and $\tilde F \in
\ogr{\kk_2,\mcal_1}$ by
   \allowdisplaybreaks
   \begin{align} \label{sum}
   \left.
   \begin{aligned}
\mcal_1&=\kk_1 \oplus \lcal,
   \\
\tilde U (f\oplus g) &= \tilde Vf \oplus Wg, \quad
f\in \kk_1, \, g \in \lcal,
   \\
\tilde F f&= \tilde E f\oplus 0, \quad f\in \kk_2,
   \end{aligned}
   \right\}
   \end{align}
where $W\in \ogr{\lcal}$ is an isometry.
Observing that $|\tilde E|=|\tilde F|$
and $\ob{\tilde E}=\ob{\tilde F}$, it is
easy to see that
$\big[\begin{smallmatrix} \tilde U &
\tilde F \\ 0 & N
\end{smallmatrix}\big] \in \gnbhh{\mcal_1,
\kk_2}$, $T \preceq
\big[\begin{smallmatrix} \tilde V &
\tilde E \\ 0 & N
\end{smallmatrix}\big] \preceq \big[\begin{smallmatrix}
\tilde U & \tilde F \\ 0 & N
\end{smallmatrix}\big]$,  $|\tilde E| = |\tilde F|$ and
   \allowdisplaybreaks
   \begin{align*}
\mcal_1 \ominus (\ob{\tilde U} \oplus
\overline{\ob{\tilde F}}) & = \big[\kk_1 \ominus
(\ob{\tilde V} \oplus \overline{\ob{\tilde E}})\big]
\oplus \big[\lcal \ominus \ob{W}\big]
   \\
&\hspace{-1.4ex}\overset{\eqref{rozkly}}= \dcal \oplus
\jd{W^*}.
   \end{align*}
Since $W$ can be chosen so that $\dim \jd{W^*}$ is of
arbitrary cardinality ($0$ is not excluded), we can
always enlarge entrywise the first column of the
initial extension $\big[\begin{smallmatrix} \tilde V &
\tilde E \\ 0 & N \end{smallmatrix}\big]$ of
$\big[\begin{smallmatrix} V & E \\ 0 & S
\end{smallmatrix}\big]$ still preserving
its properties and guaranteeing that the
dimension of the defect space $\mcal_1
\ominus (\ob{\tilde U} \oplus
\overline{\ob{\tilde F}})$ is of
arbitrary cardinality.

Next, we discuss entrywise extensions
relative to the second column. Our goal
is to show that there is no proper
entrywise extension of
$\big[\begin{smallmatrix} \tilde V &
\tilde E \\ 0 & N
\end{smallmatrix}\big]$ in the class $\mathscr{N}$
preserving the property \eqref{pruypec} and the
minimality of $N$. This can be deduced from the
following observation. If $\mcal_2\subseteq \kk_2$ is
an invariant closed subspace for $N$ such that
$\Big[\begin{smallmatrix} \tilde V & \tilde
E|_{\mcal_2} \\ 0 & N|_{\mcal_2}
\end{smallmatrix}\Big]\in \gnbhh{\kk_1,
\mcal_2}$ and $T\preceq
\Big[\begin{smallmatrix} \tilde V &
\tilde E|_{\mcal_2} \\ 0 & N|_{\mcal_2}
\end{smallmatrix}\Big]$, then
$\mcal_2=\kk_2$. To see that $\mcal_2=\kk_2$, we note that if
$\mcal$ is a closed subspace which is invariant for an
operator $M$ and $M|_{\mcal}$ is normal, then $\mcal$ reduces
$M$. Hence, $\mcal_2$ reduces $N$ and contains $\hh_2$, which
together with the minimality of $N$ yields $\mcal_2=\kk_2$.
   \hfill $\diamondsuit$
   \end{rem}
The organization of the present paper is as follows. The
spectral inclusion and filling in holes theorems for
Brownian-type operators of class $\mathscr{S}$ are given in
Section~\ref{Sec0}. Section~\ref{Sec1} contains preliminary
results needed in subsequent parts of the paper. The proofs
of Theorems~\ref{etrest}, \ref{mudyl} and \ref{mude} will
appear in Section~\ref{Sec2}. In Section~\ref{Sec.4}, we
provide examples illustrating selected issues discussed in
this paper.
   \section{\label{Sec0}Spectral inclusion and filling in holes theorems}
According to Theorem~\ref{etrest}, every operator of
class $\mathscr{S}$ has a taut entrywise extension to
an operator of class $\mathscr{N}$. It turns out that,
as in the case of subnormal operators, the spectral
inclusion and the filling in holes theorems hold for
such extensions (see Theorem~\ref{yrwq}). This is
rather surprising because operators of class
$\mathscr{N}$ are generally not normal (see
Lemma~\ref{nonur}) and, as shown in
Example~\ref{noistx}, there are operators of class
$\mathscr{S}$ (or even of class $\mathscr{U}$) which
are not subnormal. It is worth mentioning that the
question of when an operator of class $\mathscr{Q}$ is
subnormal was resolved in
\cite[Theorem~1.2]{C-J-J-S21}.

For the purposes of this section, we provide the necessary
notations. If $T\in \ogr{\hh}$, then $\sigma(T)$ and
$\rho(T)$ stand for the spectrum and the resolvent set of
$T$, respectively. The set of all complex numbers $\lambda$
such that $|\lambda| \Le 1$ is denoted by $\bar{\dbb}$.

   We begin by characterizing normal operators of
class $\mathscr{N}$.
   \begin{lem} \label{nonur}
If $T=\big[\begin{smallmatrix} V & E \\
0 & N \end{smallmatrix}\big] \in\gnbh$, then $T$ is
normal if and only if $E=0$ and $V$ is unitary.
   \end{lem}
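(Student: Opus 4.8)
The plan is to compute $T^*T$ and $TT^*$ directly from the block form and compare them using the structural identities \eqref{gqb-1}--\eqref{gqb-3} built into the definition of class $\mathscr{N}$. The ``if'' direction is immediate: if $E=0$ then $T=V\oplus N$, and if moreover $V$ is unitary then $T$ is a direct sum of two normal operators, hence normal. So the content is the ``only if'' direction, and I would argue as follows.

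First I would write out the two products. With $T=\big[\begin{smallmatrix} V & E \\ 0 & N \end{smallmatrix}\big]$ one has
\begin{align*}
T^*T=\begin{bmatrix} V^*V & V^*E \\ E^*V & E^*E+N^*N\end{bmatrix},
\qquad
TT^*=\begin{bmatrix} VV^*+EE^* & EN^* \\ NE^* & NN^*\end{bmatrix}.
\end{align*}
Using $V^*V=I$ from \eqref{gqb-1} and $V^*E=0$ from \eqref{gqb-2}, the first matrix simplifies to $\big[\begin{smallmatrix} I & 0 \\ 0 & E^*E+N^*N\end{smallmatrix}\big]$. Since $N$ is normal, $N^*N=NN^*$, so equating the $(2,2)$ entries of $T^*T$ and $TT^*$ forces $E^*E=0$, i.e. $E=0$. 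Once $E=0$, equating the $(1,1)$ entries gives $I=VV^*$, so $V$ is a coisometry; combined with $V^*V=I$ this makes $V$ unitary. (The off-diagonal entries then vanish automatically, consistent with normality, so no further conditions arise.) This finishes the argument.

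There is essentially no obstacle here: the only mildly delicate point is making sure one is allowed to use $N^*N=NN^*$, which is exactly the hypothesis $T\in\gnbh$ (the $(2,2)$ entry is normal), and that $\triangle_T$-type constraints are not needed — the proof uses only the three defining identities \eqref{gqb-1}--\eqref{gqb-3} of a Brownian-type operator together with normality of $N$, and in fact \eqref{gqb-3} is not even invoked. I would present the computation of $T^*T$ and $TT^*$, note the simplifications, read off $E=0$ from the $(2,2)$ comparison and then $VV^*=I$ from the $(1,1)$ comparison, and conclude.
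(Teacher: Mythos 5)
Your proposal is correct and follows essentially the same route as the paper: the paper computes the self-commutator $T^*T-TT^*$ in block form (using \eqref{gqb-1}, \eqref{gqb-2} and the normality of $N$) and reads off that it vanishes exactly when $E=0$ and $VV^*=I$, which is the same entrywise comparison you perform. No gaps.
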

   \begin{proof}  Since   $N$ is normal, we have
   \begin{align*}
T^*T-TT^* = \begin{bmatrix} I-VV^*-EE^* & -EN^* \\
- NE^* & E^*E
   \end{bmatrix}.
   \end{align*}
Hence, $T$ is normal if and only if $E=0$ and
$VV^*=I$.
   \end{proof}
Next, we discuss the spectrum of upper triangular
$2\times 2$ block matrix operators.
   \begin{pro}
Let $T=\big[\begin{smallmatrix} A & B \\
0 & C \end{smallmatrix}\big] \in \ogr{\hh_1 \oplus
\hh_2}$. Then the following statements are valid{\em
:}
   \begin{enumerate}
   \item[(i)] if $C$  is invertible, then $T$ is
invertible if and only if $A$ is invertible,
   \item[(ii)] $\sigma(T) \subseteq \sigma(A) \cup
\sigma(C)$,
   \item[(iii)] if $C$ is hyponormal, then $\sigma(T) = \sigma(A) \cup
\sigma(C)$.
   \end{enumerate}
   \end{pro}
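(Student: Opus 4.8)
The plan is to prove (i) directly --- by writing down an explicit inverse in one direction and a two‑line injectivity/surjectivity argument in the other --- and then to obtain (ii) as an immediate spectral translation of (i), and the reverse inclusion in (iii) from a single structural fact about hyponormal operators. For (i), assume first that $C$ is invertible. If $A$ is invertible, I would simply verify by block multiplication on both sides that
\[
\begin{bmatrix} A^{-1} & -A^{-1}BC^{-1} \\ 0 & C^{-1} \end{bmatrix}
\]
is a two‑sided inverse of $T$. Conversely, suppose $T$ is invertible. Then $A$ is injective, since $Ah=0$ gives $T(h\oplus 0)=0$, hence $h=0$; and $A$ is surjective, since for $g\in\hh_1$ we may pick $h_1\oplus h_2$ with $T(h_1\oplus h_2)=g\oplus 0$, whose second coordinate reads $Ch_2=0$, so $h_2=0$ by invertibility of $C$, and then $Ah_1=g$. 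A bounded bijection of $\hh_1$ is invertible, so $A$ is invertible.

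For (ii), I would take $\lambda\notin\sigma(A)\cup\sigma(C)$ and note that $T-\lambda I=\big[\begin{smallmatrix} A-\lambda I & B \\ 0 & C-\lambda I\end{smallmatrix}\big]$ is again upper triangular with both diagonal entries invertible; by (i) it is invertible, so $\lambda\notin\sigma(T)$. Hence $\sigma(T)\subseteq\sigma(A)\cup\sigma(C)$.

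For (iii), in view of (ii) it remains to prove $\sigma(A)\cup\sigma(C)\subseteq\sigma(T)$, i.e.\ that $\lambda\notin\sigma(T)$ forces $\lambda\notin\sigma(A)$ and $\lambda\notin\sigma(C)$. Fix such a $\lambda$. Since $T-\lambda I$ is invertible, it is surjective, and this forces $C-\lambda I$ to be surjective: given $g_2\in\hh_2$, choose $h_1\oplus h_2$ with $(T-\lambda I)(h_1\oplus h_2)=0\oplus g_2$ and read the second coordinate. Now $C-\lambda I$ is hyponormal, because $(C-\lambda I)^*(C-\lambda I)-(C-\lambda I)(C-\lambda I)^*=C^*C-CC^*\Ge 0$; hence $\jd{C-\lambda I}\subseteq\jd{(C-\lambda I)^*}=\ob{C-\lambda I}^{\perp}=\{0\}$, so $C-\lambda I$ is also injective, and therefore invertible. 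Thus $\lambda\notin\sigma(C)$, and applying (i) to $T-\lambda I$ --- whose $(2,2)$ entry is now known invertible --- together with invertibility of $T-\lambda I$ shows $A-\lambda I$ is invertible, so $\lambda\notin\sigma(A)$.

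The only genuinely substantive point, as opposed to routine block bookkeeping and the bounded inverse theorem, is the step used in (iii) that a surjective hyponormal operator is automatically invertible; this rests on the inclusion $\jd{D}\subseteq\jd{D^*}$ for hyponormal $D$, which converts ``$\ob{D}=\hh$'' into ``$\jd{D}=\{0\}$''. Everything else is mechanical.
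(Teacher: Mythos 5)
Your proof is correct and follows essentially the same route as the paper: (i) via the block-triangular factorization, (ii) as an immediate consequence, and (iii) by observing that invertibility of $T-\lambda I$ forces $C-\lambda I$ to be surjective and that a surjective hyponormal operator is invertible. The only difference is that you prove inline the two ingredients the paper merely cites (the invertibility criterion from Halmos and the fact, from Conway, that a right-invertible hyponormal operator is invertible, which you derive from $\jd{D}\subseteq\jd{D^*}=\ob{D}^{\perp}$), so your argument is self-contained but not a different method.
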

   \begin{proof}
(i) The proof can be found in \cite[p.\ 219]{Hal}.

(ii) This statement follows from (i).

(iii) Suppose that $C$ is hyponormal. In view of (ii),
it is enough to show that $\rho(T) \subseteq \rho(A)
\cap \rho(C)$. Take $\lambda \in \rho(T)$. Since
$T-\lambda I$ is right-invertible, it is easy to see
that $C-\lambda I$ is right-invertible. Knowing that
$C-\lambda I$ is hyponormal (see
\cite[Proposition~II.4.4(b)]{Co91}), we infer from
\cite[Proposition~II.4.10(a)]{Co91} that $C-\lambda I$
is invertible. However $T-\lambda I =
\big[\begin{smallmatrix} A -\lambda I & B \\
0 & C-\lambda I \end{smallmatrix}\big]$, so by (i),
$A-\lambda I$ is invertible. This completes the proof.
   \end{proof}
   \begin{cor} \label{xeq}
If $T=\big[\begin{smallmatrix} V & E \\
0 & S \end{smallmatrix}\big] \in\ghbh$, then
$\sigma(T) = \sigma(V) \cup \sigma(S)$.
   \end{cor}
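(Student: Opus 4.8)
The plan is to reduce the statement to part~(iii) of the Proposition immediately preceding it. By Definition~\ref{defq}, the assertion that $T=\big[\begin{smallmatrix} V & E \\ 0 & S\end{smallmatrix}\big]$ belongs to $\ghbh$ means, in particular, that the $(2,2)$ entry $S$ is hyponormal. Hence, specializing the preceding Proposition to $A=V$, $B=E$ and $C=S$, the hypothesis of part~(iii) is fulfilled, and we conclude at once that $\sigma(T)=\sigma(V)\cup\sigma(S)$.

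I do not foresee any real obstacle here: the only feature of the entries that is actually used is the hyponormality of $S$, so the algebraic constraints \eqref{gqb-1}--\eqref{gqb-3} and the assumption that $V$ is an isometry play no role in this particular statement (they would matter only if one wanted sharper information, e.g.\ about the point, approximate point, or essential parts of $\sigma(T)$). If one preferred a self-contained argument bypassing the Proposition, the inclusion $\sigma(T)\subseteq\sigma(V)\cup\sigma(S)$ is routine for upper triangular block operators, while for the reverse inclusion one takes $\lambda\in\rho(T)$, observes that $S-\lambda I$ is right-invertible and hyponormal, hence invertible by \cite[Proposition~II.4.10(a)]{Co91}, and then reads off from the block form $T-\lambda I=\big[\begin{smallmatrix} V-\lambda I & E \\ 0 & S-\lambda I\end{smallmatrix}\big]$ that $V-\lambda I$ is invertible as well; this gives $\lambda\in\rho(V)\cap\rho(S)$.
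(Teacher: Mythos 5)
Your proposal is correct and matches the paper's intended argument: the corollary is stated immediately after the Proposition precisely because membership in $\ghbh$ makes $S$ hyponormal, so part (iii) applies directly with $A=V$, $B=E$, $C=S$. Your self-contained alternative is in fact just the paper's own proof of part (iii), so there is nothing to add.
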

As shown below, the spectral inclusion and the filling
in holes theorems, which hold for subnormal operators,
are also valid for operators of class $\mathscr{S}$.
   \begin{thm} \label{yrwq}
Suppose that $T=\big[\begin{smallmatrix} V & E \\
0 & S \end{smallmatrix}\big] \in\gsbh$. Let
$R=\big[\begin{smallmatrix} \tilde V & \tilde E \\
0 & N \end{smallmatrix}\big] \in \gnbhh{\kk_1, \kk_2}$
be an entrywise extension of $T$ such that $|E|
\subseteq |\tilde E|$ and $N=\mathrm{mne}\, S$. Then
the following statements are valid{\em :}
   \begin{enumerate}
   \item[(i)] if $E\neq 0$, then $\sigma(T) =
\bar{\dbb} \cup \sigma(S)$ and $\sigma(R) = \bar{\dbb}
\cup \sigma(N)$,
   \item[(ii)] if $V$ is not unitary, then $\sigma(R) \subseteq \sigma(T)$,
   \item[(iii)] if $\varOmega$ is a connected component of
$\rho(R)$ such that $\varOmega \cap \sigma(T) \neq
\varnothing$, then $\varOmega \subseteq \sigma(T)$.
   \end{enumerate}
   \end{thm}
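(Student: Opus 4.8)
The plan is to derive all three statements from Corollary~\ref{xeq}, which identifies the spectrum of a Brownian-type operator of class $\mathscr{H}$ with the union of the spectra of its diagonal entries, together with two classical facts that I would invoke without reproving: first, the spectrum of an isometry is the whole closed unit disc $\bar{\dbb}$ unless the isometry is unitary (immediate from the Wold decomposition, since $\sigma$ of a unilateral shift of any multiplicity equals $\bar{\dbb}$); second, the spectral inclusion theorem for subnormal operators, $\sigma(N)\subseteq\sigma(S)$ whenever $N=\mathrm{mne}\,S$. Note that $T\in\gsbh\subseteq\ghbh$ and $R\in\gnbhh{\kk_1,\kk_2}\subseteq\mathscr{H}_{\kk_1,\kk_2}$, since subnormal and normal operators are hyponormal, so Corollary~\ref{xeq} applies to both $T$ and $R$ and gives $\sigma(T)=\sigma(V)\cup\sigma(S)$ and $\sigma(R)=\sigma(\tilde V)\cup\sigma(N)$.

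For (i), I would first use that $\||E|f\|=\|Ef\|$ for $f\in\hh_2$ to see that $E\neq0$ implies $|E|\neq0$, hence $|\tilde E|\neq0$ (because $|E|\subseteq|\tilde E|$), hence $\tilde E\neq0$. Condition \eqref{gqb-2} for $T$ and for $R$ gives $\ob{E}\subseteq\jd{V^*}=(\ob V)^{\perp}$ and $\ob{\tilde E}\subseteq(\ob{\tilde V})^{\perp}$, so neither $V$ nor $\tilde V$ is surjective; being isometries, they therefore satisfy $\sigma(V)=\sigma(\tilde V)=\bar{\dbb}$, and the formulas from the previous paragraph become $\sigma(T)=\bar{\dbb}\cup\sigma(S)$ and $\sigma(R)=\bar{\dbb}\cup\sigma(N)$.

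For (ii), suppose $V$ is not unitary. Then, being an isometry, $\sigma(V)=\bar{\dbb}$, so $\sigma(T)=\sigma(V)\cup\sigma(S)=\bar{\dbb}\cup\sigma(S)$. On the other hand $\tilde V$ is an isometry, hence $\sigma(\tilde V)\subseteq\bar{\dbb}$, and $N=\mathrm{mne}\,S$ gives $\sigma(N)\subseteq\sigma(S)$; combining, $\sigma(R)=\sigma(\tilde V)\cup\sigma(N)\subseteq\bar{\dbb}\cup\sigma(S)=\sigma(T)$. (Here the hypotheses $|E|\subseteq|\tilde E|$ and $N=\mathrm{mne}\,S$ enter only through this spectral inclusion for the subnormal $(2,2)$ entry.)

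For (iii), the observation I would rely on is that an entrywise extension is in particular an extension of block matrix operators, so $\hh:=\hh_1\oplus\hh_2$ is invariant under $R$ and $R|_{\hh}=T$; thus for every $\lambda\in\cbb$ the operator $T-\lambda I$ is the restriction of $R-\lambda I$ to the $R$-invariant subspace $\hh$. Let $\varOmega$ be a connected component of $\rho(R)$ with $\varOmega\cap\sigma(T)\neq\varnothing$, and suppose toward a contradiction that $U:=\varOmega\cap\rho(T)\neq\varnothing$. Since $U$ is a nonempty proper open subset of the connected open set $\varOmega$, it is not relatively closed in $\varOmega$, so we may pick a point $\mu_0$ of $\varOmega$ lying in the closure of $U$ but not in $U$; then $\mu_0\in\sigma(T)\cap\overline{\rho(T)}=\partial\sigma(T)$, and since the topological boundary of the spectrum is contained in the approximate point spectrum, $T-\mu_0 I$ is not bounded below. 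But $\mu_0\in\varOmega\subseteq\rho(R)$ forces $R-\mu_0 I$ to be invertible, hence bounded below, and then so is its restriction $T-\mu_0 I$ to the invariant subspace $\hh$, a contradiction. Hence $U=\varnothing$, i.e.\ $\varOmega\subseteq\sigma(T)$. I do not expect a genuine obstacle here: parts (i) and (ii) are essentially bookkeeping around Corollary~\ref{xeq}, and the only mildly delicate step is the connectedness-plus-boundary argument in (iii), which hinges precisely on identifying $(R-\lambda I)|_{\hh}$ with $T-\lambda I$ and on the inclusion $\partial\sigma(T)\subseteq\sigma_{\mathrm{ap}}(T)$.
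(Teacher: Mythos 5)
Your argument is correct and, for parts (i) and (ii), follows the paper's proof essentially verbatim: both rest on Corollary~\ref{xeq} together with \eqref{minuclus}, the fact that a non-unitary isometry has spectrum $\bar{\dbb}$, and the spectral inclusion $\sigma(N)\subseteq\sigma(S)$ for a minimal normal extension. The only divergence is in (iii): the paper disposes of it in one line by citing the filling-in-holes theorem for restrictions to invariant subspaces (\cite{Dr-Ya20}, see also \cite{Scr59}), whereas you reprove that result from scratch by the standard connectedness argument --- if $\varOmega\cap\rho(T)$ were a nonempty proper open subset of the connected set $\varOmega$, it could not be relatively closed, yielding a point $\mu_0\in\partial\sigma(T)\subseteq\sigma_{\mathrm{ap}}(T)$ lying in $\rho(R)$, which is impossible because $R-\mu_0 I$ bounded below forces its restriction $T-\mu_0 I$ to the invariant subspace $\hh_1\oplus\hh_2$ to be bounded below. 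This is precisely the content of the cited results, so your route buys self-containedness at the cost of a few extra lines; both are valid. Two cosmetic remarks: in (i) you obtain $\tilde E\neq 0$ directly from $|E|\subseteq|\tilde E|$, while the paper invokes Theorem~\ref{etrest}(a) (either works); and in (ii) the hypothesis $|E|\subseteq|\tilde E|$ in fact plays no role at all --- only $N=\mathrm{mne}\,S$ enters, through $\sigma(N)\subseteq\sigma(S)$ --- so your parenthetical claim that both hypotheses enter through that inclusion is slightly overstated, though harmlessly so.
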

   \begin{proof}
(i) Since $E\neq 0$, we deduce from
Theorem~\ref{etrest}(a) that $\tilde E\neq 0$.
Applying the orthogonality property \eqref{gqb-2} to
$T$ and $R$, we see that $V$ and $\tilde V$ are
non-unitary isometries. Consequently,
$\sigma(V)=\sigma(\tilde V)=\bar{\dbb}$ (see \cite[p.\
213, Exercise~7]{Co90}). Combined with
\eqref{minuclus} and Corollary~\ref{xeq}, this proves
(i).

(ii) It follows from \cite[Theorem~II.2.11(a)]{Co91}
that
   \begin{align} \label{spinc}
\sigma(N) \subseteq \sigma(S).
   \end{align}
By assumption, $V \subseteq \tilde V$. Since $V$ is a
non-unitary isometry and $\|\tilde V\|=1$, we get
   \begin{align*}
\sigma(\tilde V) \subseteq \bar\dbb =\sigma(V).
   \end{align*}
This, together with \eqref{minuclus}, \eqref{spinc}
and Corollary~\ref{xeq}, implies that $\sigma(R)
\subseteq \sigma(T)$.

(iii) Since $T \subseteq R$, the statement (iii) is a
direct consequence of \cite[Theorem]{Dr-Ya20} (see
also \cite[Theorem~4]{Scr59}).
   \end{proof}
We conclude this section with some comments on
Theorem~\ref{yrwq}.
   \begin{rem}
First, it is obvious that the statement (i) without
$E\neq 0$ is not true in general. Indeed, if $E=0$,
then by Theorem~\ref{etrest}(a), $\tilde E=0$, so
$T=V\oplus S$ and $R=\tilde V \oplus N$. Choosing
appropriate isometries $V$ and $\tilde V$, we will
obtain the required counterexamples.

Secondly, the statement (ii) is optimal in the sense
that there are operators $T$ and $R$ satisfying the
assumptions of Theorem~\ref{yrwq} such that $V$ is
unitary and the spectral inclusion $\sigma(R)
\subseteq \sigma(T)$ is not valid. Indeed, if $V$ is
unitary, then $E$ and consequently $\tilde E$ are zero
operators, which brings the issue of calculating
spectra to the case of orthogonal sums as above.
   \hfill $\diamondsuit$
   \end{rem}
   \section{\label{Sec1}Preparatory lemmas}
We begin by showing that the operation of taking
powers with positive real exponents preserves the
inclusion of positive operators.
   \begin{lem} \label{inrx}
Let $\hh$ be a closed subspace of a Hilbert space
$\kk$, $A\in \ogr{\hh}$ and $B\in \ogr{\kk}$ be two
positive operators and $\alpha$ be a positive real
number. Then
   \begin{align*}
A \subseteq B \iff A^{\alpha} \subseteq B^{\alpha}.
   \end{align*}
   \end{lem}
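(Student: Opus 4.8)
The plan is to prove the two implications separately; the forward direction $A \subseteq B \implies A^\alpha \subseteq B^\alpha$ is the substantive one, and the reverse follows by applying the forward direction with exponent $1/\alpha$ (noting that $A^\alpha \subseteq B^\alpha$ already forces $A^\alpha, B^\alpha$ to be positive, so the hypotheses are met). So I would reduce everything to establishing: if $A \in \ogr{\hh}$, $B \in \ogr{\kk}$ are positive and $A \subseteq B$, then $A^\alpha \subseteq B^\alpha$ for every $\alpha > 0$.

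First I would fix the notation for the embedding: let $P \in \ogr{\kk}$ be the orthogonal projection onto $\hh$, so that $A \subseteq B$ means precisely $PB|_\hh = A$ and, more usefully, $B h = A h \in \hh$ for every $h \in \hh$ (i.e. $\hh$ is invariant for $B$ and $B|_\hh = A$). The key structural observation is that $\hh$ is in fact a \emph{reducing} subspace for $B$: since $B$ is self-adjoint and $B\hh \subseteq \hh$, the orthogonal complement $\kk \ominus \hh$ is invariant for $B^* = B$ as well, so $B = A \oplus C$ with respect to $\kk = \hh \oplus (\kk \ominus \hh)$, where $C := B|_{\kk \ominus \hh} \in \ogr{\kk \ominus \hh}$ is positive. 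Once $B$ is block-diagonal, the conclusion is immediate: for any continuous function $f$ on $\sigma(B) \cup \{0\}$ one has $f(B) = f(A) \oplus f(C)$ by the continuous functional calculus (approximate $f$ uniformly by polynomials on the relevant compact spectra), and applying this to $f(t) = t^\alpha$ on $[0, \max(\|A\|,\|B\|)]$ gives $B^\alpha = A^\alpha \oplus C^\alpha$, which says exactly $A^\alpha \subseteq B^\alpha$.

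The only genuinely delicate point — and the one I would expect to need a careful sentence — is the claim that $\hh$ reduces $B$; everything else is routine functional calculus. The argument is: self-adjointness of $B$ gives, for $h \in \hh$ and $k \in \kk \ominus \hh$, that $\is{Bk}{h} = \is{k}{Bh} = 0$ because $Bh = Ah \in \hh$; hence $B(\kk \ominus \hh) \subseteq \kk \ominus \hh$, so both summands are invariant and the decomposition $B = A \oplus C$ holds. I should also remark that, since $A^\alpha \in \ogr{\hh}$ and $B^\alpha \in \ogr{\kk}$ are themselves positive operators with the same closed-subspace relationship between their underlying spaces, the reverse implication is literally the forward one applied to the pair $(A^\alpha, B^\alpha)$ and the exponent $1/\alpha$, completing the equivalence.
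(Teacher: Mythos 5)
Your proposal is correct, and it takes a mildly but genuinely different route from the paper's. The paper never decomposes $B$: it observes directly that $A\subseteq B$ forces $p(A)\subseteq p(B)$ for every polynomial $p$ (only the invariance $B\hh\subseteq\hh$ is used here), then approximates $t^{\alpha}$ uniformly by polynomials on a compact interval containing both spectra and passes to the limit in operator norm; the converse is handled, as in your write-up, by applying the forward implication with exponent $1/\alpha$. You instead exploit the self-adjointness of $B$ to upgrade invariance of $\hh$ to reduction, writing $B=A\oplus C$, after which $B^{\alpha}=A^{\alpha}\oplus C^{\alpha}$ is immediate from the functional calculus on direct sums. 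Your argument is sound (the reducing-subspace step is exactly as delicate as you flag, and you justify it correctly), and it buys a stronger structural conclusion — the full block-diagonal form of $B$ over $\hh\oplus(\kk\ominus\hh)$ — at the cost of using positivity/self-adjointness earlier and more essentially. The paper's version is leaner in its hypotheses at the polynomial stage and is phrased so that it extends verbatim to the remark following the lemma, namely that $A\subseteq B$ implies $f(A)\subseteq f(B)$ for any continuous $f$ on the union of the spectra; your route yields that same generalization, but only after the reduction step. One cosmetic point: in the reverse direction it is not that $A^{\alpha}\subseteq B^{\alpha}$ ``forces'' positivity of $A^{\alpha}$ and $B^{\alpha}$ — they are positive simply because $A$ and $B$ are and the powers are defined by the functional calculus; the relevant check is rather that $(A^{\alpha})^{1/\alpha}=A$ and $(B^{\alpha})^{1/\alpha}=B$, which holds by composition of continuous functional calculi.
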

   \begin{proof}
It suffices to prove the ``only if'' part. Suppose
that $A \subseteq B$. Then
   \begin{align} \label{trsz}
\textit{$p(A)\subseteq p(B)$ for every complex
polynomial $p$ in one variable.}
   \end{align}
Let $J$ be a closed finite subinterval of $[0,\infty)$
which contains the spectra of $A$ and $B$. By the
Weierstrass theorem, there exists a sequence
$\{p_n\}_{n=1}^{\infty}$ of complex polynomials in one
variable such that $\lim_{n\to\infty}\sup_{x\in J}
|x^{\alpha}-p_n(x)|=0$. Applying the functional
calculus for selfadjoint operators, we deduce that
$\lim_{n\to \infty}\|A^{\alpha} - p_n(A)\|=0$ and
$\lim_{n\to \infty}\|B^{\alpha} - p_n(B)\|=0$. This
together with \eqref{trsz} implies that $A^{\alpha}
\subseteq B^{\alpha}$.
   \end{proof}
Observe that the same proof as above shows that if $A
\subseteq B$ and $f$ is a continuous complex function
defined on the union of spectra of $A$ and $B$, then
$f(A) \subseteq f(B)$.

Next, we prove that the orthogonal projections onto
kernels of normal elements of von Neumann algebras are
preserved by $*$-isomor\-phisms.
   \begin{lem} \label{inyec}
Let $\acal$ and $\bcal$ be two von Neumann algebras on
Hilbert spaces $\hh$ and $\kk$, respectively, such
that $I_{\hh}\in \acal$ and $I_{\kk}\in \bcal$.
Suppose that $\psi\colon \acal \to \bcal$ is a
$*$-isomorphism and $A\in \acal$ is a normal operator.
Denote by $P_{\jd{A}}$ and $P_{\jd{\psi(A)}}$ the
orthogonal projections of $\hh$ and $\kk$ onto
$\jd{A}$ and $\jd{\psi(A)}$, respectively. Then
   \begin{enumerate}
   \item[(i)] $P_{\jd{A}} \in \acal$ and $P_{\jd{\psi(A)}} \in
   \bcal$,
   \item[(ii)] $\psi(P_{\jd{A}})=P_{\jd{\psi(A)}}$.
   \end{enumerate}
In particular, $A$ is injective if and only if
$\psi(A)$ is injective.
   \end{lem}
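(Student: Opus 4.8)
The statement to prove is Lemma~\ref{inyec}: a $*$-isomorphism $\psi\colon \acal\to\bcal$ of von Neumann algebras sends the kernel projection of a normal $A\in\acal$ to the kernel projection of $\psi(A)$, and consequently preserves injectivity. The natural route is through the functional calculus / spectral projections, exploiting that $\psi$, being a $*$-isomorphism of $C^*$-algebras, is automatically isometric and $\sigma_{\acal}(A)=\sigma_{\bcal}(\psi(A))$, and that it is normal (i.e. $\sigma$-weakly continuous on bounded sets) because every $*$-isomorphism between von Neumann algebras is $\sigma$-weakly continuous. First I would recall that for a normal operator $A$ the spectral projection $P_{\jd A}=\chi_{\{0\}}(A)$ onto $\jd A=\jd{A^*A}$ lies in $W^*(A)\subseteq\acal$, which gives part~(i); the same applies to $\psi(A)$ inside $\bcal$.

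For part~(ii), the cleanest argument is to represent $P_{\jd A}=\chi_{\{0\}}(A)$ as a bounded-monotone (or bounded) limit of polynomials (or continuous functions) in $A$ and $A^*$ evaluated spectrally, and then push through $\psi$. Concretely, one can use that $P_{\jd A}=\operatorname{s-lim}_{n} (I+n A^*A)^{-1}$, a bounded $\sigma$-weak limit; since $\psi$ is a unital $*$-homomorphism it intertwines the resolvents, $\psi\big((I+nA^*A)^{-1}\big)=(I+n\,\psi(A)^*\psi(A))^{-1}$, and since $\psi$ is $\sigma$-weakly continuous on bounded sets it carries this limit to $\operatorname{s-lim}_n (I+n\,\psi(A)^*\psi(A))^{-1}=P_{\jd{\psi(A)}}$. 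This yields $\psi(P_{\jd A})=P_{\jd{\psi(A)}}$. Alternatively, avoiding normality questions of $\psi$ altogether: $P_{\jd A}$ is the largest projection $Q\in\acal$ with $AQ=0$; apply $\psi$ to see $\psi(A)\psi(Q)=0$, so $\psi(P_{\jd A})\le P_{\jd{\psi(A)}}$, and apply $\psi^{-1}$ to the analogous inequality in $\bcal$ to get equality. This second, purely order-theoretic argument is perhaps the slickest and avoids invoking $\sigma$-weak continuity, though it uses that kernel projections of elements of a von Neumann algebra belong to the algebra.

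The final clause, ``$A$ injective iff $\psi(A)$ injective,'' is immediate from (ii): $A$ is injective iff $P_{\jd A}=0$ iff $\psi(P_{\jd A})=0$ (since $\psi$ is injective) iff $P_{\jd{\psi(A)}}=0$ iff $\psi(A)$ is injective.

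\emph{Main obstacle.} The only subtle point is justifying that the kernel projection of a von Neumann algebra element lies in the algebra and is preserved under $\psi$; once one fixes on the ``largest projection annihilated by $A$'' characterization (using normality of $A$ to know $\jd A$ is invariant and $P_{\jd A}\in\{A,A^*\}''\subseteq\acal$), the rest is a short formal manipulation with $\psi$ and $\psi^{-1}$. I would therefore organize the proof around that characterization rather than around spectral integrals.
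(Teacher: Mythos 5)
Your preferred (second) argument---from $AP_{\jd{A}}=0$ deduce $\psi(A)\psi(P_{\jd{A}})=0$, hence $\psi(P_{\jd{A}})\le P_{\jd{\psi(A)}}$, then apply the same inequality to $(\psi^{-1},\psi(A))$ and use that $\psi$ preserves order to get equality---is exactly the proof in the paper, which likewise obtains part (i) by writing $P_{\jd{A}}=G(\{0\})$ for the spectral measure $G$ of $A$ and invoking the fact that spectral projections of a normal element lie in the von Neumann algebra. The alternative route you sketch via resolvent limits and $\sigma$-weak continuity of $\psi$ is also viable but, as you correctly judged, unnecessary.
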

   \begin{proof}
(i) Since $P_{\jd{A}}=G(\{0\})$, where $G$ is the
spectral measure of $A$ (see
\cite[Theorem~6.6.2]{Bi-So87}), we infer from
\cite[Theorem~4.1.11(2)]{Mur90} that $P_{\jd{A}} \in
\acal$. The same argument can be applied to $\psi(A)$,
which is normal as the image of a normal operator via
$*$-homomorphism. Thus (i) holds.

(ii) Since $\psi$ is a $*$-homomorphism, we see that
$\psi(P_{\jd{A}})$ is an orthogonal projection in
$\bcal$. Noting that $AP_{\jd{A}}=0$, we deduce that
$\psi(A)\psi(P_{\jd{A}})=0$, so $\ob{\psi(P_{\jd{A}})}
\subseteq \jd{\psi(A)}$, which implies that
   \begin{align} \label{cusyk}
\psi(P_{\jd{A}}) \Le P_{\jd{\psi(A)}}.
   \end{align}
Applying \eqref{cusyk} to the pair $(\psi^{-1},\psi(A))$ in
place of $(\psi,A)$, we obtain
   \begin{align*}
\psi^{-1}(P_{\jd{\psi(A)}}) \Le P_{\jd{\psi^{-1}(\psi(A))}} =
P_{\jd{A}}.
   \end{align*}
Since $\psi$ is $*$-homomorphism, $\psi$ preserves the
order\footnote{Observe that if $A\in \acal$ is such that $A$
is positive in the sense of quadratic forms, then $A$ is a
positive element of $\acal$ in the sense that $A=B^*B$ for
some $B\in \acal$, and {\em vice versa}.} ``$\Le$'', so
$P_{\jd{\psi(A)}} \Le \psi(P_{\jd{A}})$, which together with
\eqref{cusyk} yields (ii). The remaining part of the
conclusion follows from (ii). This completes the proof.
   \end{proof}
The uniqueness of lifts stated in Lemma~\ref{judnluf}
below is a consequence of \eqref{cioka} and
\cite[Proposition~II.2.5]{Co91}. We leave the simple
proof to the reader.
   \begin{lem} \label{judnluf}
Let $N_1\in \ogr{\kk_1}$ and $N_2\in \ogr{\kk_2}$ be
two minimal normal extensions of a subnormal operator
$S\in \ogr{\hh}$. Then there exists a unique unitary
operator $U\in \ogr{\kk_1, \kk_2}$ such that
$U|_{\hh}=I_{\hh}$ and $UN_1=N_2U$. Moreover, if
$B_1\in \ogr{\kk_1}$ and $B_2\in \ogr{\kk_2}$ are the
lifts of $A\in \{S,S^*\}^{\prime}$ relative to
$\{N_1\}'$ and $\{N_2\}'$, respectively, then
$UB_1=B_2U$.
   \end{lem}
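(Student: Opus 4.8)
The plan is to produce $U$ directly on the dense linear manifold spanned by the vectors $N_j^{*n}h$ with $n\Ge 0$ and $h\in\hh$; by the minimality of $N_j$ and \eqref{mynipu}, this manifold is dense in $\kk_j$ for $j=1,2$. First I would compute the inner products of these spanning vectors. For $m,n\Ge 0$ and $h,h'\in\hh$, the normality of $N_j$ gives $N_j^m N_j^{*n} = N_j^{*n}N_j^m$, while $N_j\supseteq S$ gives $N_j^k h = S^k h\in\hh$ for all $k\Ge 0$; hence
\begin{align*}
\is{N_j^{*n}h}{N_j^{*m}h'} = \is{N_j^m N_j^{*n}h}{h'} = \is{N_j^{*n}N_j^m h}{h'} = \is{N_j^m h}{N_j^n h'} = \is{S^m h}{S^n h'}.
\end{align*}
Since the right-hand side is independent of $j$, the assignment $N_1^{*n}h\mapsto N_2^{*n}h$ extends by linearity to a well-defined linear isometry of a dense manifold of $\kk_1$ onto a dense manifold of $\kk_2$; its continuous extension $U\in\ogr{\kk_1,\kk_2}$ is therefore unitary, and taking $n=0$ shows $U|_{\hh}=I_{\hh}$.

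Next I would verify the required intertwining relation on the dense manifold. For $UN_1=N_2U$, write $N_1 N_1^{*n}h = N_1^{*n}(N_1 h) = N_1^{*n}(Sh)$ with $Sh\in\hh$, so that $UN_1 N_1^{*n}h = N_2^{*n}(Sh) = N_2 N_2^{*n}h = N_2 U N_1^{*n}h$. For uniqueness, if $U'\in\ogr{\kk_1,\kk_2}$ is unitary, $U'|_{\hh}=I_{\hh}$ and $U'N_1=N_2U'$, then the Fuglede--Putnam theorem yields $U'N_1^* = N_2^*U'$, hence $U'N_1^{*n}h = N_2^{*n}h$ for all $n\Ge 0$ and $h\in\hh$; thus $U'$ and $U$ coincide on the dense manifold, so $U'=U$. (This part is also contained in \cite[Proposition~II.2.5]{Co91}.)

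For the ``moreover'' assertion I would use \eqref{cioka}. Since $A\in\{S,S^*\}'\subseteq\ogr{\hh}$, we have $Ah\in\hh$, and \eqref{cioka} applied to the lifts $B_1$ and $B_2$ gives $B_1 N_1^{*n}h = N_1^{*n}Ah$ and $B_2 N_2^{*n}h = N_2^{*n}Ah$ for all $n\Ge 0$ and $h\in\hh$. Consequently,
\begin{align*}
UB_1 N_1^{*n}h = U N_1^{*n}(Ah) = N_2^{*n}(Ah) = B_2 N_2^{*n}h = B_2 U N_1^{*n}h,
\end{align*}
and since the vectors $N_1^{*n}h$ span a dense manifold, $UB_1=B_2U$ follows by continuity.

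The only step calling for genuine care is the inner-product computation in the first paragraph, and specifically the observation that the resulting Gram array depends on $S$ alone and not on the chosen minimal normal extension; once that is in place, everything reduces to routine density-and-continuity bookkeeping, so I do not anticipate a real obstacle.
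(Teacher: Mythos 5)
Your proof is correct and follows exactly the route the paper intends: the authors leave the argument to the reader, noting only that it is a consequence of \eqref{cioka} and \cite[Proposition~II.2.5]{Co91}, and your construction of $U$ via the Gram-matrix identity $\is{N_j^{*n}h}{N_j^{*m}h'}=\is{S^m h}{S^n h'}$ is precisely the standard proof of that proposition, while your treatment of the ``moreover'' part is the intended application of \eqref{cioka} together with density of $\bigvee_{n\Ge 0}N_1^{*n}\hh$ from \eqref{mynipu}.
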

Our next goal is to show that the lift of the
orthogonal sum of two operators is the orthogonal sum
of the lifts of components.
   \begin{lem} \label{simurt}
Let $S_1 \in \ogr{\hh_1}$ and $S_2 \in \ogr{\hh_2}$ be
subnormal operators and let $N\in \ogr{\kk}$ be a
minimal normal extension of $S:=S_1\oplus S_2$. Then
   \begin{enumerate}
   \item[(i)] $N$ can be $($uniquely$)$ decomposed as
$N=N_1 \oplus N_2$ relative to an orthogonal
decomposition $\kk=\kk_1 \oplus \kk_2$ in such a way
that $N_j=\mathrm{mne}\, S_j$ for $j=1,2$,
   \item[(ii)] if $A\in \{S,S^*\}^{\prime}$ is
of the form $A=A_1\oplus A_2$ relative to $\hh_1\oplus
\hh_2$ and $B$ is the lift of $A$ relative to
$\{N\}'$, then $\kk_j$ reduces $B$ and $B|_{\kk_j}$ is
the lift of $A_j$ relative to $\{N_j\}'$ for $j=1,2$.
   \end{enumerate}
   \end{lem}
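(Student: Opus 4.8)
The plan is to exploit the minimality of $N$ via the description \eqref{mynipu}, namely $\kk=\bigvee_{n\ge 0}N^{*n}\hh$ with $\hh=\hh_1\oplus\hh_2$. First I would set $\kk_j:=\bigvee_{n\ge 0}N^{*n}\hh_j$ for $j=1,2$. The key point is that $\kk_1\perp\kk_2$: to see this, note that because $S=S_1\oplus S_2$, the subspaces $\hh_1,\hh_2$ are reducing for $S$ and in particular invariant for $S^*$; since $N$ extends $S$, for $h_j\in\hh_j$ we have $N^{*}h_j=S^{*}h_j\in\hh_j$ only after projecting, so the honest argument is to use the Fuglede-type reduction: $P_{\hh}\in\{N\}'$ fails in general, but $P_{\hh_1}$ (as a subprojection of $P_\hh$) commutes with $S$ and $S^*$, hence by \cite[Corollary, p.\ 88]{Br55} it lifts to a projection $Q_1\in\{N\}'\cap\{P_\hh\}'$; then $\kk_1=\ob{Q_1}$ and $\kk_2=\ob{I-Q_1}=\ob{Q_2}$, giving $\kk=\kk_1\oplus\kk_2$ with each $\kk_j$ reducing $N$. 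Put $N_j:=N|_{\kk_j}$, which is normal since $\kk_j$ reduces the normal operator $N$.

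Next I would check $N_j=\mathrm{mne}\,S_j$. Each $N_j$ extends $S_j$ because $Q_j$ restricts to the identity on $\hh_j$ and annihilates $\hh_{3-j}$ (this follows from \eqref{cioka}: $Q_jN^{*n}h=N^{*n}P_{\hh_j}h$ for $h\in\hh$), so on $\hh_j$ one has $N_j h_j=N h_j=S h_j=S_j h_j$. For minimality, since $Q_j\in\{N\}'$ we get $N^{*n}\hh_j=N^{*n}Q_j\hh=Q_jN^{*n}\hh\subseteq\kk_j$, and conversely $\kk_j\subseteq\bigvee_n N^{*n}\hh$ together with $Q_j$-invariance forces $\kk_j=\bigvee_{n\ge 0}N^{*n}\hh_j=\bigvee_{n\ge 0}N_j^{*n}\hh_j$; by \eqref{mynipu} applied to $S_j$ this is exactly minimality of $N_j$. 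Uniqueness of the decomposition in (i) is then clear: any reducing decomposition $N=N_1'\oplus N_2'$ with $N_j'=\mathrm{mne}\,S_j$ must have underlying space $\bigvee_n N^{*n}\hh_j=\kk_j$.

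For (ii), let $A=A_1\oplus A_2\in\{S,S^*\}'$ and let $B$ be its lift relative to $\{N\}'$, so $B\in\{N\}'\cap\{P_\hh\}'$ and $A\subseteq B$. Since $Q_j\in\{N\}'$ and (I claim) $Q_j\in\{P_\hh\}'$, and since $A_j=A|_{\hh_j}$ commutes with $P_{\hh_j}$, the operator $B$ commutes with $Q_j$: indeed $Q_j$ is itself the lift of $P_{\hh_j}$ relative to $\{N\}'$, both $B$ and $Q_j$ lie in the (commutative up to the relevant commutation) algebra, but cleanest is to invoke \eqref{cioka} for $B$ and for $Q_j$ simultaneously — for $h\in\hh$ and $n\ge 0$, $BQ_jN^{*n}h=BN^{*n}P_{\hh_j}h=N^{*n}AP_{\hh_j}h=N^{*n}A_jP_{\hh_j}h=N^{*n}P_{\hh_j}Ah=Q_jN^{*n}Ah=Q_jBN^{*n}h$, and such vectors are dense, so $BQ_j=Q_jB$. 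Hence $\kk_j=\ob{Q_j}$ reduces $B$; set $B_j:=B|_{\kk_j}$. Then $B_j\in\{N_j\}'\cap\{P_{\hh_j}\}'$ (restricting the commutation relations) and $A_j\subseteq B_j$, and by the uniqueness of lifts (from \eqref{cioka} and \cite[Proposition~II.2.5]{Co91}, as already noted before Lemma~\ref{judnluf}) $B_j$ is \emph{the} lift of $A_j$ relative to $\{N_j\}'$, as required.

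The main obstacle is the bookkeeping in the first paragraph: producing the reducing projections $Q_j$ and verifying $\kk=\kk_1\oplus\kk_2$ cleanly. The subtlety is that $P_\hh$ need not commute with $N$, so one cannot simply intersect $\hh$ with reducing subspaces of $N$; the right move is to apply Bram's lifting result \cite[Corollary, p.\ 88]{Br55} to the projection $P_{\hh_1}\in\{S,S^*\}'$ to obtain $Q_1$, and then everything else — minimality via \eqref{mynipu}, the density argument via \eqref{cioka}, and compatibility with $B$ — follows mechanically. Once $Q_1,Q_2$ are in hand, parts (i) and (ii) are short.
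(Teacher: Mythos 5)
Your argument is correct, and your part (ii) is essentially the paper's: both proofs reduce $B$ by $\kk_j$ using \eqref{cioka} together with the density of $\bigvee_{n\ge 0}N^{*n}\hh_j$ in $\kk_j$, and then identify $B|_{\kk_j}$ as the lift of $A_j$ by the uniqueness of lifts. Where you genuinely diverge is part (i). The paper never lifts a projection: it sets $\kk_j=\bigvee_{n\ge 0}N^{*n}\hh_j$ outright, checks that $\kk_j$ reduces $N$ (via the commutation of $N$ with $N^{*n}$), and proves $\kk_1\perp\kk_2$ by the direct computation $\is{N^{*m}f}{N^{*n}g}=\is{N^{n}f}{N^{m}g}=\is{S_1^{n}f}{S_2^{m}g}=0$ for $f\in\hh_1$, $g\in\hh_2$ --- the normality of $N$ lets one move the adjoints to the other side, which is exactly the ``honest argument'' you were looking for and which makes the detour through Bram's commutant lifting unnecessary. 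Your route instead realizes $\kk_j$ as $\ob{Q_j}$ for the lifted projections $Q_j$ of $P_{\hh_j}\in\{S,S^*\}'$; this is valid and arguably more systematic (part (ii) then reads as ``diagonal elements of the commutant lift diagonally,'' with the orthogonality and reducibility of the $\kk_j$ coming for free), but it silently uses that the lift of an orthogonal projection is again an orthogonal projection, i.e., the multiplicativity of the Bram lifting --- a fact the paper only records later, in the Basic Construction, as the $*$-isomorphism property \eqref{pusyya}, and which goes beyond the bare statement of \cite[Corollary, p.~88]{Br55} as quoted in the introduction. If you keep your version, make that dependence explicit; otherwise the paper's two-line orthogonality computation is the lighter tool.
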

   \begin{proof}
(i) Fix $j\in \{1,2\}$. Set $\kk_j
=\bigvee_{n=0}^{\infty} N^{*n}\hh_j$. It is easy to
see that $\kk_j$ reduces $N$ and $N_j:=N|_{\kk_j}$ is
a normal extension of $S_j$. Since
   \begin{align} \label{grqa}
\kk_j=\bigvee_{n=0}^{\infty} N^{*n}\hh_j =
\bigvee_{n=0}^{\infty} N_j^{*n}\hh_j,
   \end{align}
we infer from \eqref{mynipu} that $N_j$ is a minimal normal
extension of $S_j$. Since for all $f\in \hh_1$, $g \in \hh_2$
and $m,n \in \{0,1,2, \ldots\}$,
   \begin{align*}
\is{N^{*m}f}{N^{*n}g}= \is{N^n f}{N^{m}g} =\is{S_1^n
f}{S_2^{m}g}=0,
   \end{align*}
we conclude that the spaces $\kk_1$ and $\kk_2$ are
orthogonal. Combined with \eqref{mynipu}, this implies that
\allowdisplaybreaks
   \begin{align*}
\kk =\bigvee_{n=0}^{\infty} N^{*n}(\hh_1\oplus \hh_2)
= \bigvee_{n=0}^{\infty} N_1^{*n}\hh_1 \oplus
\bigvee_{n=0}^{\infty} N_2^{*n}\hh_2
\overset{\eqref{grqa}}= \kk_1 \oplus \kk_2.
   \end{align*}
Hence $N=N_1\oplus N_2$. The proof of the uniqueness goes as
follows. If $N= M_1\oplus M_2$ is an another decomposition of
$N$ relative to $\kk=\mcal_1\oplus \mcal_2$ and
$M_j=\mathrm{mne}\, S_j$, then by \eqref{mynipu} we have
   \begin{align*}
\mcal_j=\bigvee_{n=0}^{\infty} (M_j)^{*n}\hh_j =
\bigvee_{n=0}^{\infty} N^{*n}\hh_j
\overset{\eqref{grqa}}= \kk_j.
   \end{align*}

(ii) Fix $j\in \{1,2\}$. Clearly, $A_j\in
\{S_j,S_j^*\}^{\prime}$. Since
   \begin{align*}
B(N^{*n}h_j) \overset{\eqref{cioka}}= N^{*n} A h_j =
N^{*n} A_jh_j, \quad n\Ge 0, \, h_j\in \hh_j,
   \end{align*}
we infer from \eqref{grqa} that $\kk_j$ is invariant
for $B$. Applying this to $A^*$ and $B^*$ (see
\eqref{luftr}) shows that $\kk_j$ is invariant for
$B^*$, so $\kk_j$ reduces $B$. Hence $B|_{\kk_j} \in
\{N_j\}'$ and $A_j \subseteq B|_{\kk_j}$. Applying
this to $A^*$ and $B^*$ again yields $A_j^* \subseteq
(B|_{\kk_j})^*$. Therefore $B|_{\kk_j} \in \{N_j\}'
\cap \{P_j\}'$, where $P_j\in \ogr{\kk_j}$ is the
orthogonal projection of $\kk_j$ onto $\hh_j$.
Consequently, $B|_{\kk_j}$ is the lift of $A_j$
relative to $\{N_j\}'$.
   \end{proof}
The question of cohyponormality of upper triangular
$2\times 2$ block matrix operators is discussed below.
   \begin{lem} \label{nurm}
Suppose that $T=\big[\begin{smallmatrix} A & X \\
0 & Y \end{smallmatrix}\big] \in \ogr{\hh_1 \oplus \hh_2}$
and $Y$ is hyponormal. Then the following conditions are
equivalent{\em :}
   \begin{enumerate}
   \item[(i)] $T^*$ is hyponormal,
   \item[(ii)] $X=0$, $A^*$ is
hyponormal and $Y^*$ is hyponormal,
   \item[(iii)] $X=0$, $A^*$  is
hyponormal and $Y$ is normal.
   \end{enumerate}
In particular, if $T$ is a Brownian-type operator of
class $\mathscr{H}$ and $T^*$ is hyponormal, then
$X=0$ and $A$ is unitary.
   \end{lem}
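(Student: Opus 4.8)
The plan is to convert co-hyponormality of $T$ into positivity of the self-commutator $TT^*-T^*T$, compute the latter as a $2\times2$ operator matrix, and then extract the conclusions from its diagonal blocks. Writing $T=\big[\begin{smallmatrix} A & X \\ 0 & Y\end{smallmatrix}\big]$, a direct computation gives
\[
TT^* - T^*T = \begin{bmatrix} AA^* + XX^* - A^*A & XY^* - A^*X \\ YX^* - X^*A & YY^* - X^*X - Y^*Y \end{bmatrix},
\]
and $T^*$ is hyponormal if and only if this operator is positive. With this in hand, $(iii)\Rightarrow(ii)$ is immediate, since a normal operator is trivially co-hyponormal. For $(ii)\Rightarrow(i)$ I would substitute $X=0$ above: the self-commutator of $T^*$ becomes block-diagonal with diagonal entries $AA^*-A^*A$ and $YY^*-Y^*Y$, both positive because $A^*$ and $Y^*$ are hyponormal; hence $T^*$ is hyponormal.

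The only real work is $(i)\Rightarrow(iii)$. Assume the displayed block matrix is positive. Testing its quadratic form on vectors of the form $0\oplus g$ with $g\in\hh_2$ gives $YY^*-X^*X-Y^*Y\Ge0$, that is, $YY^*-Y^*Y\Ge X^*X\Ge0$. Since $Y$ is hyponormal, $YY^*-Y^*Y\Le0$ as well, and the two inequalities leave no room: they force $YY^*-Y^*Y=0$ and $X^*X=0$, so $Y$ is normal and $X=0$. Feeding $X=0$ back into the display and testing the quadratic form on vectors $f\oplus0$ with $f\in\hh_1$ yields $AA^*-A^*A\Ge0$, i.e. $A^*$ is hyponormal, which is $(iii)$. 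The main (and essentially the only) obstacle is this squeezing step, where hyponormality of $Y$ is exactly what collapses the inequality coming from the $(2,2)$ block and simultaneously kills $X$ and forces $Y$ to be normal; everything else is formal.

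For the ``in particular'' clause, suppose $T=\big[\begin{smallmatrix} V & E \\ 0 & W\end{smallmatrix}\big]$ is a Brownian-type operator of class $\mathscr H$ with $T^*$ hyponormal. The $(2,2)$ entry $W$ is hyponormal by hypothesis, so the equivalence $(i)\Leftrightarrow(iii)$ applies and yields $E=0$ together with the hyponormality of $V^*$, i.e. $VV^*-V^*V\Ge0$. As $V$ is an isometry, $V^*V=I$ and $VV^*$ is the orthogonal projection onto $\ob{V}$, so $VV^*\Le I$; combined with $VV^*\Ge V^*V=I$ this forces $VV^*=I$, hence $V$ is unitary. This completes the plan.
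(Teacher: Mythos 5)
Your proof is correct and follows essentially the same route as the paper: both compute the self-commutator $TT^*-T^*T$ as a $2\times 2$ block matrix, read off positivity of the $(2,2)$ block, and squeeze it against the hyponormality of $Y$ to force $X=0$ and $Y$ normal, with the remaining implications and the ``in particular'' clause being formal. The only cosmetic difference is that you close the cycle as (i)$\Rightarrow$(iii)$\Rightarrow$(ii)$\Rightarrow$(i) and spell out why a co-hyponormal isometry is unitary, which the paper leaves implicit.
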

   \begin{proof}  Writing $[C,D]=CD-DC$ for $C,D \in
\ogr{\hh}$, observe that
   \begin{align} \label{iliema}
[T,T^*] = \begin{bmatrix} XX^* - [A^*,A] & XY^* - A^*X
\\[2ex]
YX^* - X^*A & -(X^*X + [Y^*,Y])
\end{bmatrix}.
   \end{align}

(i)$\Rightarrow$(iii) Since $[T,T^*] \Ge 0$, we deduce
from \eqref{iliema} that $X^*X + [Y^*,Y] \Le 0$, so
$X^*X \Le -[Y^*,Y] \Le 0$, which implies that $X=0$
and $[Y^*,Y]=0$. Combined with \eqref{iliema}, this
shows that $[A^*,A]\Le 0$, which yields (iii).

(iii)$\Rightarrow$(i) This is a direct consequence of
\eqref{iliema}.

(ii)$\Leftrightarrow$(iii) This is obvious.

Finally, the ``in particular'' part follows from the
implication (i)$\Rightarrow$(ii).
   \end{proof}
   \begin{cor} \label{minfik}
Let $N\in \ogr{\kk}$ be a normal extension of a
subnormal operator $S\in \ogr{\hh}$. Then
   \begin{enumerate}
   \item[(i)] if
$\big[\begin{smallmatrix} S & X \\
0 & Y \end{smallmatrix}\big]$ is the $2 \times 2$
block matrix representation of $N$ with respect to the
orthogonal decomposition $\kk = \hh \oplus
\hh^{\perp}$ and $Y$ is hyponormal, then $X=0$,
   \item[(ii)] if $\dim \hh^{\perp} < \infty$,
then $S$ is normal and $\hh$ reduces $N${\em ;} in
particular, if $\dim \hh^{\perp} < \infty$ and
$N=\mathrm{mne}\, S$, then $S=N$.
   \end{enumerate}
   \end{cor}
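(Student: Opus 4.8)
The plan is to deduce both parts from Lemma~\ref{nurm}. For (i), note that since $N$ is normal, its adjoint $N^*$ is normal and hence hyponormal; moreover $N$ is presented as the upper triangular $2\times 2$ block matrix operator $\big[\begin{smallmatrix} S & X \\ 0 & Y \end{smallmatrix}\big]$ with respect to $\kk=\hh\oplus\hh^{\perp}$, and its $(2,2)$ entry $Y$ is hyponormal by assumption. Thus the hypotheses of Lemma~\ref{nurm} are met with $T=N$, $A=S$, and the implication (i)$\Rightarrow$(ii) of that lemma yields $X=0$ at once.

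For (ii), the only real task is to manufacture the hyponormality of $Y$ out of the finite-codimension hypothesis, after which part~(i) applies. The key observation is that, although $\hh^{\perp}$ need not be invariant for $N$, it \emph{is} invariant for $N^*$: from the block form $N^*=\big[\begin{smallmatrix} S^* & 0 \\ X^* & Y^* \end{smallmatrix}\big]$ one reads off immediately that $N^*(\hh^{\perp})\subseteq\hh^{\perp}$ and $N^*|_{\hh^{\perp}}=Y^*$. Since $N^*$ is normal, hence hyponormal, and the restriction of a hyponormal operator to an invariant closed subspace is again hyponormal, we conclude that $Y^*$ is hyponormal. Now $\dim\hh^{\perp}<\infty$ enters: a hyponormal operator on a finite-dimensional space is normal, since the self-adjoint operator $YY^*-Y^*Y$ is then positive and of trace zero, hence zero. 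So $Y$ is normal, in particular hyponormal, and part~(i) gives $X=0$. Consequently $N=S\oplus Y$ with respect to $\kk=\hh\oplus\hh^{\perp}$, which shows that $\hh$ reduces $N$ and that $S=N|_{\hh}$ is normal as the restriction of a normal operator to a reducing subspace.

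The ``in particular'' clause then follows from minimality: if $N=\mathrm{mne}\,S$, then $\kk$ is the only closed subspace of $\kk$ containing $\hh$ and reducing $N$; since $\hh$ itself is such a subspace by the previous step, $\kk=\hh$, whence $S=N$. I do not anticipate a genuine obstacle here — the one point worth flagging is the passage to adjoints in (ii): one must work with $Y^*=N^*|_{\hh^{\perp}}$, exploiting the $N^*$-invariance of $\hh^{\perp}$, rather than trying to deal with $Y$ directly, since $Y$ is only a compression (not a restriction) of $N$ and so is not manifestly hyponormal.
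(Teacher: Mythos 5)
Your proof is correct and follows essentially the same route as the paper: part (i) is read off from Lemma~\ref{nurm}, and part (ii) reduces to observing that $Y^*=N^*|_{\hh^{\perp}}$ lives on a finite-dimensional space and is therefore normal, after which (i) applies and minimality gives the ``in particular'' clause. The only cosmetic difference is that the paper quotes the standard fact that $Y^*$ is subnormal with normal extension $N^*$, whereas you derive hyponormality of the restriction directly and finish with the trace-zero argument; both collapse to normality of $Y$ in finite dimensions.
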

   \begin{proof}
(i) This a direct consequence of Lemma~\ref{nurm}.

(ii) It is well-known that $Y^*$ is a subnormal
operator with normal extension $N^*$ (see \cite[p.\
40]{Co91}). Since $Y^*$ acts on a finite-dimensional
space, $Y$ is normal, so applying (i) completes the
proof.
   \end{proof}
Powers of Brownian-type operators can be described as
follows (cf.\ \cite[Proposition~3.10]{C-J-J-S21}).
   \begin{lem} \label{xyzzyx}
Let $T = \big[\begin{smallmatrix} V & E \\
0 & S \end{smallmatrix}\big] \in \ogr{\hh_1\oplus \hh_2}$ be
a Brownian-type operator and
   \begin{align} \label{syru}
E_n :=
   \begin{cases}
0 & \text{ for } n=0,
   \\
\sum_{j=0}^{n-1} V^jES^{n-1-j} & \text{ for } n\Ge 1.
   \end{cases}
   \end{align}
Then the following statements are valid{\em :}
   \begin{enumerate}
   \item[(i)] $T^n= \big[\begin{smallmatrix} V^n & E_n \\
0 & S^n \end{smallmatrix}\big]$ for $n\Ge 0$,
   \item[(ii)] $E_n^*E_n =
E^*E\sum_{j=0}^{n-1}S^{*j}S^j=\big(\sum_{j=0}^{n-1}S^{*j}S^j
\big) E^*E$ for $n\Ge 1$,
   \item[(iii)] $E^*E$ commutes with
$S^{*i}S^j$ for all $i,j\Ge 0$.
   \end{enumerate}
   \end{lem}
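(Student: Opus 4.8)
The plan is to handle the three parts in the order (i), (iii), (ii), since the computation of $E_n^*E_n$ in (ii) will draw on both the block form of $T^n$ and the commutation identities isolated in (iii).

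For (i) I would proceed by induction on $n$. The case $n=0$ is immediate, as $T^0=I$ and $E_0=0$ by \eqref{syru}. For the inductive step, writing $T^{n+1}=T^n T$ and using the inductive hypothesis gives
$T^{n+1}=\big[\begin{smallmatrix} V^{n+1} & V^nE+E_nS \\ 0 & S^{n+1}\end{smallmatrix}\big]$,
so it remains to check $V^nE+E_nS=\sum_{j=0}^{n}V^jES^{n-j}=E_{n+1}$, which is just a shift of the summation index in the definition of $E_n$. This step is routine bookkeeping.

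For (iii), the starting point is that \eqref{gqb-3} says precisely $SE^*E=E^*ES$. An easy induction then yields $S^jE^*E=E^*ES^j$ for every $j\Ge 0$, and taking adjoints gives $S^{*i}E^*E=E^*ES^{*i}$ for every $i\Ge 0$. Combining the two identities, $S^{*i}S^jE^*E=S^{*i}E^*ES^j=E^*ES^{*i}S^j$, which is (iii); in particular $E^*E$ commutes with each $S^{*m}S^m$.

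For (ii) I would expand $E_n^*E_n=\sum_{j,k=0}^{n-1}S^{*(n-1-j)}E^*V^{*j}V^kES^{n-1-k}$ and show that all off-diagonal terms vanish. Indeed, by \eqref{gqb-1} one has $V^{*j}V^k=V^{k-j}$ when $k\Ge j$ and $V^{*j}V^k=V^{*(j-k)}$ when $j>k$, while \eqref{gqb-2} gives $E^*V=0$, hence $E^*V^mE=0$ and $E^*V^{*m}E=(E^*V^mE)^*=0$ for every $m\Ge 1$. Thus only the terms with $j=k$ survive, leaving $E_n^*E_n=\sum_{j=0}^{n-1}S^{*(n-1-j)}E^*ES^{n-1-j}$. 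Moving $E^*E$ to the left using the commutation identities from (iii) and reindexing via $m=n-1-j$ turns this into $E^*E\sum_{m=0}^{n-1}S^{*m}S^m$, and a final appeal to the commutativity of $E^*E$ with each $S^{*m}S^m$ gives the alternative form $\big(\sum_{m=0}^{n-1}S^{*m}S^m\big)E^*E$. The only mildly delicate point in the whole argument is the annihilation of the off-diagonal terms $E^*V^{*j}V^kE$ in (ii), but since this is a direct consequence of $V^*V=I$ and $V^*E=0$, I do not expect any genuine obstacle.
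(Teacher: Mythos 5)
Your proof is correct, and it follows essentially the same route as the paper: the paper derives (iii) directly from \eqref{gqb-3} and then delegates (i) and (ii) to the computation in the cited reference \cite[Proposition~3.10(i)]{C-J-J-S21}, which is precisely the induction for the block form and the diagonal-term expansion of $E_n^*E_n$ that you carry out explicitly. Your write-up simply supplies the details the paper leaves to the citation.
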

   \begin{proof}
The statement (iii) follows from \eqref{gqb-3} applied
to $X=S$. To justify (i) and (ii), we can argue as in
the proof of \cite[Proposition~3.10(i)]{C-J-J-S21}
using~(iii).
   \end{proof}
Now, we give a necessary and sufficient condition for
the $n$th power of a Brownian-type operator to be of
class~$\mathscr{S}$.
   \begin{pro} \label{xyzzyv}
Suppose that $T = \big[\begin{smallmatrix} V & E \\
0 & S \end{smallmatrix}\big] \in \ogr{\hh_1 \oplus \hh_2}$ is
a Brownian-type operator. Then the space
$\hh_{22}:=\overline{\ob{|E|}}$ reduces $S$ and for any $n\Ge
2$, the following conditions are equivalent{\em :}
   \begin{enumerate}
   \item[(i)] $T^n \in \gsbh$,
   \item[(ii)] $S^n$ is subnormal and $S_2^n$
commutes with $\sum_{j=1}^{n-1}S_2^{*j}S_2^j$, where
$S_2:=S|_{\hh_{22}}$.
   \end{enumerate}
In particular, if $E$ is injective, then for any $n\Ge
2$, $T^n \in \gsbh$ if and only if $S^n$ is subnormal
and $S^n$ commutes with $\sum_{j=1}^{n-1}S^{*j}S^j$.
   \end{pro}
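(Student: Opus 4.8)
The plan is to compute $T^n$ via Lemma~\ref{xyzzyx} and then decide, one condition at a time, when the resulting block matrix is again a Brownian-type operator of class $\mathscr{S}$ with respect to the fixed decomposition $\hh_1\oplus\hh_2$. First, the assertion that $\hh_{22}=\overline{\ob{|E|}}$ reduces $S$ is just \eqref{reotp} with $X=S$; writing $\hh_2=\jd{|E|}\oplus\hh_{22}$, we then have $S=S_1\oplus S_2$ with $S_1:=S|_{\jd{|E|}}$ and $S_2:=S|_{\hh_{22}}$, and the positive operator $E^*E=|E|^2$ vanishes on $\jd{|E|}$ and is injective on $\hh_{22}=\overline{\ob{E^*E}}$.

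Next I would dispose of the conditions that hold automatically. By Lemma~\ref{xyzzyx}(i), $T^n=\big[\begin{smallmatrix}V^n & E_n\\ 0 & S^n\end{smallmatrix}\big]$. Since $V$ is an isometry so is $V^n$, and using $V^*E=0$ together with $(V^*)^nV^j=(V^*)^{n-j}$ for $0\le j<n$ (so that each summand of $(V^n)^*E_n$ carries a factor $(V^*)^{n-j}E$ with $n-j\ge1$, hence vanishes) one gets $(V^n)^*E_n=0$. Thus \eqref{gqb-1} and \eqref{gqb-2} hold automatically for $T^n$, so $T^n$ is a Brownian-type operator if and only if \eqref{gqb-3} holds for $T^n$, i.e.\ $S^nE_n^*E_n=E_n^*E_nS^n$. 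Substituting $E_n^*E_n=E^*E\,R_n$ from Lemma~\ref{xyzzyx}(ii), where $R_n:=\sum_{j=0}^{n-1}S^{*j}S^j=I+\sum_{j=1}^{n-1}S^{*j}S^j$, and using that $E^*E$ commutes with $S^n$ by Lemma~\ref{xyzzyx}(iii), a short rearrangement turns this into
\[
E^*E\Big[S^{n},\ \sum_{j=1}^{n-1}S^{*j}S^{j}\Big]=0,
\]
the term $I$ in $R_n$ dropping out of the commutator (which is why the surviving sum starts at $j=1$).

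It remains to localize this identity on $\hh_{22}$. Because $S$ reduces both $\jd{|E|}$ and $\hh_{22}$, so does $S^*$, hence the commutator $\big[S^n,\sum_{j=1}^{n-1}S^{*j}S^j\big]$ splits along $\hh_2=\jd{|E|}\oplus\hh_{22}$ with $\hh_{22}$-component $\big[S_2^n,\sum_{j=1}^{n-1}S_2^{*j}S_2^j\big]$; since $E^*E$ annihilates $\jd{|E|}$ and is injective on $\hh_{22}$, the displayed equality is equivalent to $S_2^n$ commuting with $\sum_{j=1}^{n-1}S_2^{*j}S_2^j$. As the $(2,2)$ entry of $T^n$ is $S^n$, we conclude that $T^n\in\gsbh$ precisely when $T^n$ is a Brownian-type operator \emph{and} $S^n$ is subnormal, which is the equivalence (i)$\Leftrightarrow$(ii). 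The ``in particular'' statement is the special case $\jd{|E|}=\{0\}$, i.e.\ $E$ injective, for which $\hh_{22}=\hh_2$ and $S_2=S$.

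As for the main difficulty, there really is none of substance once Lemma~\ref{xyzzyx} is in hand; the only delicate points are the bookkeeping in the second paragraph (reducing \eqref{gqb-3} for $T^n$ to a commutator identity for $E^*E$, using that $E^*E$ commutes with $S^n$ and with each $S^{*j}S^j$) and, in the third paragraph, the passage from $S$ to $S_2$ via the reducing decomposition $\hh_2=\jd{|E|}\oplus\hh_{22}$ and the injectivity of $E^*E$ on $\overline{\ob{|E|}}$.
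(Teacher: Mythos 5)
Your proof is correct and follows essentially the same route as the paper's: compute $T^n$ via Lemma~\ref{xyzzyx}, observe that \eqref{gqb-1} and \eqref{gqb-2} hold automatically for $T^n$ (since $V^{*n}E_n=0$), and reduce \eqref{gqb-3} for $T^n$ to the commutation of $S_2^n$ with $\sum_{j=1}^{n-1}S_2^{*j}S_2^j$. The only (immaterial) difference is in the localization step: you factor the condition as $E^*E\big[S^n,\sum_{j=1}^{n-1}S^{*j}S^j\big]=0$ and invoke injectivity of $E^*E$ on $\hh_{22}$, whereas the paper applies both sides to vectors $E^*Eh$ and uses density of $\ob{E^*E}$ in $\hh_{22}$.
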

   \begin{proof}
First, we show that the conditions (i) and (ii) are
equivalent. Using \eqref{gqb-2} and the fact that $V$ is an
isometry, we see that $V^{*n}E_n=0$ for any $n\Ge 0$, where
$E_n$ is as in \eqref{syru}. This and Lemma~\ref{xyzzyx}(i)
imply that for every $n\Ge 2$, (i) holds if and only if $S^n$
is subnormal and $S^n$ commutes with $E_n^*E_n$. Fix $n\Ge
2$. Note that by \eqref{reotp}, the space $\hh_{22}$ reduces
$S$. Thus, the following equalities~hold:
   \allowdisplaybreaks
   \begin{align*}
   \left.
   \begin{aligned}
S_2^n \bigg(\sum_{j=0}^{n-1} S_2^{*j}S_2^j\bigg)
(E^*Eh) & = S^n
\bigg(\sum_{j=0}^{n-1}S^{*j}S^j\bigg)E^*Eh
   \\
\bigg(\sum_{j=0}^{n-1} S_2^{*j}S_2^j\bigg) S_2^n (E^*Eh) &
\overset{\eqref{gqb-3}}=
\bigg(\sum_{j=0}^{n-1}S^{*j}S^j\bigg) E^*E S^n h
   \end{aligned}
   \right\}, \quad h\in \hh_2.
   \end{align*}
Combined with the statement (ii) of Lemma~\ref{xyzzyx} and
the equality $\hh_{22}=\overline{\ob{E^*E}}$, this implies
that $S^n$ commutes with $E_n^*E_n$ if and only if (ii)
holds.

``In particular'' part follows from \eqref{luinw} and
the equivalence (i)$\Leftrightarrow$(ii).
   \end{proof}
The following corollary is a direct consequence of
Proposition~\ref{xyzzyv} and the square root theorem.
   \begin{cor}
Let  $T = \big[\begin{smallmatrix} V & E \\
0 & S \end{smallmatrix}\big] \in \gsbh$. Then $T^2 \in
\gsbh$ if and only if the operators $S_2^2$ and
$|S_2|$ commute, where
$S_2:=S|_{\overline{\ob{|E|}}}$. In particular, if $E$
is injective, then $T^2 \in \gsbh$ if and only if the
operators $S^2$ and $|S|$ commute.
   \end{cor}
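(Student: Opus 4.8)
The plan is to obtain the statement as an immediate consequence of Proposition~\ref{xyzzyv} taken with $n=2$, together with two elementary observations: that $S^2$ is automatically subnormal, and that commuting with a positive operator is the same as commuting with its positive square root.

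First I would note that since $T\in\gsbh$, the $(2,2)$ entry $S$ is subnormal, so if $N$ is a normal extension of $S$ then $N^2$ is a normal extension of $S^2$; hence $S^2$ is subnormal. Therefore the requirement ``$S^2$ is subnormal'' appearing in condition~(ii) of Proposition~\ref{xyzzyv} (with $n=2$) is automatically fulfilled, and that condition collapses to the single demand that $S_2^2$ commute with $\sum_{j=1}^{1}S_2^{*j}S_2^{j}=S_2^{*}S_2=|S_2|^2$, where $S_2=S|_{\overline{\ob{|E|}}}$ is well defined because $\overline{\ob{|E|}}$ reduces $S$ by \eqref{reotp}.

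Next I would invoke the square root theorem \cite[Theorem~2.4.4]{Sim-4}, exactly as in the passage relating \eqref{gqb-3} to \eqref{gqb-3b}: an operator commutes with the positive operator $|S_2|^2$ if and only if it commutes with its positive square root $|S_2|$. Taking that operator to be $S_2^2$ gives that $S_2^2$ commutes with $|S_2|^2$ precisely when $S_2^2$ commutes with $|S_2|$. Combining this with the previous paragraph and Proposition~\ref{xyzzyv} yields that $T^2\in\gsbh$ if and only if $S_2^2$ and $|S_2|$ commute, which is the first assertion.

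For the ``in particular'' clause I would argue similarly starting from the ``in particular'' part of Proposition~\ref{xyzzyv} with $n=2$, or simply observe that if $E$ is injective then $|E|$ is injective by \eqref{luinw}, and since $|E|$ is self-adjoint, $\overline{\ob{|E|}}=\jd{|E|}^{\perp}=\hh_2$, so $S_2=S$; substituting into the equivalence just established gives the claim. I do not expect any genuine obstacle here: all the substance is already contained in Proposition~\ref{xyzzyv}, and the only points requiring attention are citing the correct form of the square root theorem and noticing that the subnormality of $S^2$ comes for free, after which the argument is pure bookkeeping.
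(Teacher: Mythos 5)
Your proposal is correct and follows essentially the same route as the paper, which derives the corollary directly from Proposition~\ref{xyzzyv} (with $n=2$, where the sum collapses to $|S_2|^2$) combined with the square root theorem. The additional observations you supply --- that $S^2$ is automatically subnormal and that $S_2=S$ when $E$ is injective --- are exactly the bookkeeping the paper leaves implicit.
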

A part of the next corollary appeared in
\cite[Proposition~3.10(i)]{C-J-J-S21}.
   \begin{cor}
\label{kluskry} Any positive integer power of an
operator of class $\mathscr{U}$ $($resp.,
$\mathscr{I}$, $\mathscr{N}$, $\mathscr{Q}$$)$ is of
class $\mathscr{U}$ $($resp., $\mathscr{I}$,
$\mathscr{N}$, $\mathscr{Q}$$)$.
   \end{cor}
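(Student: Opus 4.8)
The plan is to deduce the statement from the power formula in Lemma~\ref{xyzzyx}, splitting the work into two independent parts: showing that $T^n$ again satisfies the three defining conditions \eqref{gqb-1}--\eqref{gqb-3} of a Brownian-type operator, and showing that its $(2,2)$ entry $X^n$ stays in the prescribed class. Since $\mathscr U \subseteq \mathscr I \subseteq \mathscr Q$ and $\mathscr U \subseteq \mathscr N \subseteq \mathscr Q$ by \eqref{minuclus}, for the first part it suffices to treat the case in which $X$ is quasinormal; I fix $n \Ge 1$ once and for all and write $T=\big[\begin{smallmatrix} V & E \\ 0 & X\end{smallmatrix}\big]$.

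First I would record the elementary identity that, for a quasinormal $X$, one has $X^{*j}X^j = (X^*X)^j$ for every integer $j \Ge 0$. Quasinormality means $X(X^*X)=(X^*X)X$, so $X$, hence $X^n$, commutes with every power of $X^*X$; the identity follows by induction, writing $X^{*(j+1)}X^{j+1} = X^*\bigl(X^{*j}X^j\bigr)X = X^*(X^*X)^j X = X^*X\,(X^*X)^j = (X^*X)^{j+1}$.

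Next I would invoke Lemma~\ref{xyzzyx}(i) to write $T^n = \big[\begin{smallmatrix} V^n & E_n \\ 0 & X^n \end{smallmatrix}\big]$ with $E_n$ as in \eqref{syru}, and verify \eqref{gqb-1}--\eqref{gqb-3} for the triple $(V^n,E_n,X^n)$. Condition \eqref{gqb-1} is clear since $V$ is an isometry. For \eqref{gqb-2}, a short computation gives $(V^n)^*E_n = \sum_{j=0}^{n-1}(V^*)^{\,n-j}E X^{n-1-j} = 0$, because $n-j\Ge 1$ and $(V^*)^k E = (V^*)^{k-1}(V^*E)=0$ for $k\Ge 1$ by \eqref{gqb-1}--\eqref{gqb-2}. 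For \eqref{gqb-3}, Lemma~\ref{xyzzyx}(ii) yields $E_n^*E_n = E^*E\,\Sigma_n$ with $\Sigma_n := \sum_{j=0}^{n-1}X^{*j}X^j$; by the identity above $\Sigma_n$ is a polynomial in $X^*X$, so $X^n$ commutes with $\Sigma_n$, while $X^n$ commutes with $E^*E$ by Lemma~\ref{xyzzyx}(iii) (take $i=0$, $j=n$). Hence
\[
X^n\bigl(E_n^*E_n\bigr) = X^n\,E^*E\,\Sigma_n = E^*E\,X^n\,\Sigma_n = E^*E\,\Sigma_n\,X^n = \bigl(E_n^*E_n\bigr)X^n,
\]
so $T^n$ is a Brownian-type operator.

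Finally I would check that $X^n$ inherits the class of $X$: if $X$ is unitary, isometric, or normal, so is $X^n$ by classical facts (for normality, via the spectral theorem or Fuglede's theorem); if $X$ is quasinormal, then $X^{*n}X^n = (X^*X)^n$ by the displayed identity, and since $X^n$ commutes with $X^*X$ it commutes with $(X^*X)^n = X^{*n}X^n$, i.e.\ $X^n$ is quasinormal. Together with the previous step this gives $T^n \in \mathscr X_{\hh_1,\hh_2}$ for each $\mathscr X \in \{\mathscr U,\mathscr I,\mathscr N,\mathscr Q\}$. Nothing here is deep, but the one step needing care is condition \eqref{gqb-3} for $T^n$: for a general Brownian-type entry $X$ there is no reason $X^n$ should commute with $\Sigma_n$, and it is exactly the quasinormal power identity $X^{*j}X^j=(X^*X)^j$ that makes the argument go through, so I expect that identity — rather than the block-matrix bookkeeping — to be the crux.
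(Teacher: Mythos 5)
Your proof is correct and follows essentially the same route as the paper: both hinge on Lemma~\ref{xyzzyx} together with the quasinormal power identity $Q^{*j}Q^j=(Q^*Q)^j$, and both reduce the classes $\mathscr{U}$, $\mathscr{I}$, $\mathscr{N}$ to the quasinormal case via the inclusions \eqref{minuclus}. The only cosmetic difference is that you verify \eqref{gqb-1}--\eqref{gqb-3} for the triple $(V^n,E_n,X^n)$ by hand, whereas the paper delegates that bookkeeping to Proposition~\ref{xyzzyv}.
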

   \begin{proof}
The case of operators of class $\mathscr{Q}$ follows from
Lemma ~\ref{xyzzyx}(i) and Proposition~\ref{xyzzyv}, and the
fact that for any quasinormal operator $Q\in \ogr{\hh}$ and
for every integer $n \Ge 1$, $Q^n$ is quasinormal and
$Q^{*n}Q^n=(Q^*Q)^n$ (see e.g., \cite[Corollary~3.8 and
Theorem~3.6]{J-J-S14}). Since $\mathscr{U}, \mathscr{I},
\mathscr{N} \subseteq \mathscr{Q}$ and each of these classes
is closed under the operation of taking positive integer
powers, an application of the quasinormal case along with
Lemma~\ref{xyzzyx}(i) completes the proof.
   \end{proof}
The parallel issue of entrywise extensions of powers
of operators of class $\mathscr{S}$ to operators of
class $\mathscr{N}$ is discussed below.
   \begin{pro}\label{ahj}
Let $T = \big[\begin{smallmatrix} V & E \\
0 & S \end{smallmatrix}\big] \in \gsbh$,
$R=\big[\begin{smallmatrix} \tilde V & \tilde E \\ 0 &
N
\end{smallmatrix}\big]\in
\gnbhh{\kk_{1},\kk_{2}}$ and $T \preceq R$. Fix an
integer $n\Ge 1$. Let $E_n$ be as in \eqref{syru} and
$\tilde E_n = \sum_{j=0}^{n-1} \tilde V^j\tilde E
N^{n-1-j}$.~Then
   \begin{enumerate}
   \item[(i)] $\big[\begin{smallmatrix}
V^n & E_n \\ 0 & S^n
\end{smallmatrix}\big] = T^n \preceq R^n
= \big[\begin{smallmatrix}
\tilde V^n &  \tilde E_n \\
0 & N^n \end{smallmatrix}\big] \in
\gnbhh{\kk_{1},\kk_{2}}$,
   \item[(ii)] if $|E| \subseteq
|\tilde E|$, then $E_n^*E_n = P \tilde E_n^*\tilde E_n
|_{\hh_2}$, where $P\in \ogr{\kk_2}$ stands for the
orthogonal projection of $\kk_2$ onto $\hh_2$,
   \item[(iii)] if $N = \mathrm{mne}\,
S$, then $N^n = \mathrm{mne}\, S^n$,
   \item[(iv)] if $T\in \gqbh$ and $|E|\subseteq |\tilde
E|$, then $T^n \in \gqbh$, $T^n \preceq R^n \in
\gnbhh{\kk_{1},\kk_{2}}$ and $|E_n| \subseteq |\tilde
E_n|$,
   \item[(v)]
if $T\in \gibh$ and $N = \mathrm{mne}\, S$, then
$R^n\in \gubhh{\kk_1,\kk_2}$ and $N^n = \mathrm{mne}\,
S^n$.
   \end{enumerate}
   \end{pro}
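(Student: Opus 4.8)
\emph{Overview.} The plan is to bootstrap everything from Lemma~\ref{xyzzyx}, Corollary~\ref{kluskry} and Lemma~\ref{inrx}, proving (i) first, deducing (ii) from the entrywise inclusion contained in (i), handling (iii) by a reducing–subspace argument, deriving (iv) from (i)--(ii) plus the quasinormality of $S$, and finally obtaining (v) from (iii).

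\emph{Parts (i) and (ii).} Lemma~\ref{xyzzyx}(i) applied to the Brownian–type operators $T$ and $R$ gives at once the displayed block forms $T^n=\big[\begin{smallmatrix}V^n & E_n\\ 0 & S^n\end{smallmatrix}\big]$ and $R^n=\big[\begin{smallmatrix}\tilde V^n & \tilde E_n\\ 0 & N^n\end{smallmatrix}\big]$, with $\tilde E_n$ as in the statement. Since $R\in\gnbhh{\kk_1,\kk_2}$, Corollary~\ref{kluskry} yields $R^n\in\gnbhh{\kk_1,\kk_2}$; and from $T\preceq R$ we get $T\subseteq R$, hence $T^n\subseteq R^n$, which is entrywise because $\hh_j\subseteq\kk_j$, so $T^n\preceq R^n$. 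This proves (i), and in particular $E_n\subseteq\tilde E_n$, i.e. $\tilde E_n|_{\hh_2}=E_n$. For (ii), a one–line adjoint computation gives $P\tilde E_n^{*}|_{\hh_1}=E_n^{*}$ (for $g\in\hh_1$, $h\in\hh_2$: $\is{P\tilde E_n^{*}g}{h}=\is{g}{\tilde E_nh}=\is{g}{E_nh}=\is{E_n^{*}g}{h}$), whence $P\tilde E_n^{*}\tilde E_nh=P\tilde E_n^{*}(E_nh)=E_n^{*}E_nh$ for $h\in\hh_2$. No difficulty is expected here.

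\emph{Part (iii).} By \eqref{mynipu}, $\mathrm{mne}\,S^n$ acts on $\mcal:=\bigvee_{m\ge0}N^{*nm}\hh_2$, so the claim is $\mcal=\kk_2$. Since $\hh_2$ is invariant for $N$ and $N$ commutes with $N^{*}$ and its powers by Fuglede's theorem, $\mcal$ is invariant for $N$ and reduces $N^n$; hence $N|_{\mcal}$ is subnormal (a restriction of a normal operator to an invariant subspace) and $(N|_{\mcal})^n=N^n|_{\mcal}$ is normal. Invoking the classical fact that a subnormal (indeed hyponormal) operator whose $n$-th power is normal is normal — obtainable from Putnam's inequality applied to the fibers of the direct–integral decomposition of $N|_{\mcal}$ over $\sigma(N^n|_{\mcal})$, which are hyponormal $n$-th roots of scalars and so, having finite spectrum, are normal — we get that $N|_{\mcal}$ is normal, so $\mcal$ reduces $N$; since $\hh_2\subseteq\mcal$ and $N=\mathrm{mne}\,S$ is minimal, $\mcal=\kk_2$. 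This step (the identity $\mathrm{mne}(S^n)=(\mathrm{mne}\,S)^n$) is the one I expect to be the main obstacle; it may alternatively just be cited.

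\emph{Parts (iv) and (v).} For (iv): Corollary~\ref{kluskry} gives $T^n\in\gqbh$, and (i) gives $T^n\preceq R^n\in\gnbhh{\kk_1,\kk_2}$; it remains to check $|E_n|\subseteq|\tilde E_n|$. As $S$ is quasinormal, $S^{*j}S^j=(S^{*}S)^j$, and by \eqref{gqb-3}, $E^{*}E$ commutes with $S$, $S^{*}$, hence with $S^{*}S$; so Lemma~\ref{xyzzyx}(ii) gives $E_n^{*}E_n=E^{*}E\,q(S^{*}S)$ with $q(t)=\sum_{j=0}^{n-1}t^j$, and likewise $\tilde E_n^{*}\tilde E_n=\tilde E^{*}\tilde E\,q(N^{*}N)$ (normality of $N$). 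Writing $N=\big[\begin{smallmatrix}S & X\\ 0 & Y\end{smallmatrix}\big]$ on $\kk_2=\hh_2\oplus(\kk_2\ominus\hh_2)$, normality of $N$ forces $S^{*}S=SS^{*}+XX^{*}$, and quasinormality of $S$ (i.e. $SS^{*}S=S^{*}SS$) then forces $XX^{*}S=0$, hence $X^{*}S=0$; since $\hh_{22}:=\overline{\ob{|E|}}$ reduces $S$ (by \eqref{reotp}), hence $S^{*}S$, this shows $N^{*}N$ leaves $\hh_{22}$ invariant and $N^{*}N|_{\hh_{22}}=S^{*}S|_{\hh_{22}}$. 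Finally $|E|\subseteq|\tilde E|$ gives $E^{*}E\subseteq\tilde E^{*}\tilde E$ (Lemma~\ref{inrx}) and $\ob{E^{*}E}\subseteq\hh_{22}$, so for $h\in\hh_2$, moving $q(S^{*}S)$, resp. $q(N^{*}N)$, past the commuting factors $E^{*}E$, resp. $\tilde E^{*}\tilde E$, yields $\tilde E_n^{*}\tilde E_nh=q(N^{*}N)(E^{*}Eh)=q(S^{*}S)(E^{*}Eh)=E_n^{*}E_nh$; thus $E_n^{*}E_n\subseteq\tilde E_n^{*}\tilde E_n$ and Lemma~\ref{inrx} gives $|E_n|\subseteq|\tilde E_n|$. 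For (v): since $S$ is an isometry and $N=\mathrm{mne}\,S$, $N$ is unitary (the minimal normal extension of an isometry is unitary, by the Wold decomposition together with Lemma~\ref{simurt}), so $R\in\gubhh{\kk_1,\kk_2}$ and Corollary~\ref{kluskry} gives $R^n\in\gubhh{\kk_1,\kk_2}$, while $N^n=\mathrm{mne}\,S^n$ is exactly (iii).
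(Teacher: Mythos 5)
Your proposal is correct and reaches all five conclusions, but it diverges from the paper's proof at several points, mostly by replacing citations with direct arguments. For (i) you use the converse noted after Definition~\ref{enrty} (operator extension plus $\hh_j\subseteq\kk_j$ gives entrywise extension) where the paper invokes the multiplicativity of $\preceq$; same substance. For (ii) your argument is actually cleaner and slightly stronger: you deduce $P\tilde E_n^{*}\tilde E_n|_{\hh_2}=E_n^{*}E_n$ purely from $E_n\subseteq\tilde E_n$, without using the hypothesis $|E|\subseteq|\tilde E|$ at all, whereas the paper's computation routes through Lemma~\ref{xyzzyx}(ii) and the identity $|\tilde E|f=|E|f$. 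For (iii) the paper simply cites Olin's theorem that $\mathrm{mne}(S^n)=(\mathrm{mne}\,S)^n$; you instead prove it by showing $\mcal=\bigvee_m N^{*nm}\hh_2$ is $N$-invariant with $N|_{\mcal}$ subnormal and $(N|_{\mcal})^n$ normal, and then invoking Stampfli's theorem that a hyponormal operator with a normal power is normal. That fact is classical and your conclusion is right, but your direct-integral-plus-Putnam justification of it is only a sketch (measurable fields, a.e.\ hyponormality of fibers); as you anticipate, it is cleaner to cite it, just as the paper cites Olin. For (iv) you re-derive, via the block computation $XX^{*}=[S^{*},S]$ and $XX^{*}S=S^{*}SS-SS^{*}S=0$, the fact that $\hh_2$ is invariant for $N^{*}N$ with $N^{*}N|_{\hh_2}=S^{*}S$; the paper gets this by citing Campbell. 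Both then conclude $E_n^{*}E_n\subseteq\tilde E_n^{*}\tilde E_n$ and apply Lemma~\ref{inrx}. Part (v) is identical. Net effect: your version is more self-contained (at the cost of leaning on Stampfli's theorem, stated without a complete proof), and your (ii) isolates the fact that the projection identity needs only the entrywise inclusion, not the modulus inclusion.
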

   \begin{proof}
(i) This follows from Lemma~\ref{xyzzyx},
Corollary~\ref{kluskry} and \eqref{mulmnoz}.
   \begin{align} \label{mulmnoz}
  \left.
\begin{minipage}{66ex} {\em If
\eqref{asdlet}
holds,  $\big[\begin{smallmatrix} B_{11} & B_{12} \\
B_{21} & B_{22} \end{smallmatrix}\big] \in \ogr{\hh_1
\oplus \hh_2}$, $\big[\begin{smallmatrix}
\tilde B_{11} & \tilde B_{12} \\
\tilde B_{21} & \tilde B_{22}
\end{smallmatrix}\big] \in \ogr{\kk_1 \oplus \kk_2}$ and
$\big[\begin{smallmatrix} B_{11} & B_{12} \\
B_{21} & B_{22} \end{smallmatrix}\big] \preceq
\big[\begin{smallmatrix}
\tilde B_{11} & \tilde B_{12} \\
\tilde B_{21} & \tilde B_{22}
\end{smallmatrix}\big]$, then}
$$
\begin{bmatrix} A_{11} & A_{12} \\
A_{21} & A_{22} \end{bmatrix} \begin{bmatrix} B_{11} & B_{12} \\
B_{21} & B_{22} \end{bmatrix} \preceq
\begin{bmatrix}
\tilde A_{11} & \tilde A_{12} \\
\tilde A_{21} & \tilde A_{22}
\end{bmatrix}
\begin{bmatrix} \tilde B_{11} & \tilde B_{12} \\
\tilde B_{21} & \tilde B_{22}
\end{bmatrix}.
   $$
   \end{minipage}
   \right\}
   \end{align}

(ii) Applying Lemma~\ref{xyzzyx}(ii) to $T$ and $R$ and using
the inclusion $|E| \subseteq |\tilde E|$~yields
\allowdisplaybreaks
   \begin{align*}
\is{P\tilde E_n^* \tilde E_n|_{\hh_2} f}{f} & =
\Big\langle\sum_{j=0}^{n-1}N^{*j}N^j |\tilde E|^2 f,
f\Big \rangle
   \\
&\hspace{-.3ex}\overset{\eqref{gqb-3b}}=
\sum_{j=0}^{n-1}\|N^j |\tilde E|f\|^2 =
\sum_{j=0}^{n-1}\|S^j |E|f\|^2
   \\
&\hspace{-.3ex}\overset{\eqref{gqb-3b}} =
\Big\langle\sum_{j=0}^{n-1}S^{*j}S^j |E|^2 f, f
\Big\rangle = \is{E_n^* E_n f}{f}, \quad f \in \hh_2.
   \end{align*}
This proves (ii).

(iii) This follows from \cite[Theorem~4.1]{Ol76} (the
assumption $n\Ge 1$ is essential).

(iv) By Corollary~\ref{kluskry} and (i), $T^n \in \gqbh$ and
$T^n \preceq R^n \in \gnbhh{\kk_{1},\kk_{2}}$. It remains to
show that $|E_n| \subseteq |\tilde E_n|$. Note that since
$S=N|_{\hh_2}$ is quasinormal, \cite[Theorem~1]{Cam75}
implies that $\hh_2$ is invariant for $N^*N$ (see also
\cite[Lemma~1(ii)]{Em81}). It follows from the inclusion $|E|
\subseteq |\tilde E|$ and Lemma~\ref{xyzzyx}(ii) that
   \begin{align*}
\tilde E_n^*\tilde E_nf = \sum_{j=0}^{n-1}(N^*N)^j
|E|^2 f, \quad f\in \hh_2,
   \end{align*}
which implies that $\hh_2$ is invariant for $\tilde
E_n^*\tilde E_n$. Combined with (ii) and
Lemma~\ref{inrx}, this implies that $|E_n| \subseteq
|\tilde E_n|$.

(v) This is a consequence of (i), (iii) and the fact
that a minimal normal extension of an isometric
operator is unitary. This completes the proof.
   \end{proof}
Applying the statements (iii) and (iv) of
Proposition~\ref{ahj}, we can describe entrywise
extensions of powers of operators of class
$\mathscr{Q}$ to operators of class $\mathscr{N}$.
   \begin{cor}
Let $T = \big[\begin{smallmatrix} V & E \\
0 & S \end{smallmatrix}\big] \in \gqbh$,
$R=\big[\begin{smallmatrix} \tilde V & \tilde E \\ 0 &
N \end{smallmatrix}\big]\in \gnbhh{\kk_{1},\kk_{2}}$
and $T \preceq R$. Suppose that $|E|\subseteq |\tilde
E|$. Fix an integer $n\Ge 1$. Let $E_n$ and $\tilde
E_n$ be as in Proposi\-tion~{\em \ref{ahj}}. Then $T^n
\in \gqbh$, $\big[\begin{smallmatrix} V^n & E_n
\\ 0 & S^n
\end{smallmatrix}\big] = T^n \preceq R^n
= \big[\begin{smallmatrix}
\tilde V^n &  \tilde E_n \\
0 & N^n \end{smallmatrix}\big] \in
\gnbhh{\kk_{1},\kk_{2}}$ and $|E_n| \subseteq |\tilde
E_n|$. If, moreover, $N = \mathrm{mne}\, S$, then $N^n
= \mathrm{mne}\, S^n$.
   \end{cor}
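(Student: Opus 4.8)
The plan is to derive this corollary directly from Proposition~\ref{ahj}. First I would observe that, by the chain of inclusions $\gqbh \subseteq \gsbh$ recorded in the remark following Definition~\ref{defq} (equivalently, quasinormal operators are subnormal; see \eqref{minuclus}), the operator $T$ lies in $\gsbh$, so the standing hypotheses of Proposition~\ref{ahj} are satisfied by $T$, $R$, the inclusion $T\preceq R$ and the fixed integer $n\Ge 1$.

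Next I would invoke Proposition~\ref{ahj}(iv): since $T\in \gqbh$ and $|E|\subseteq|\tilde E|$ by assumption, it yields at once that $T^n\in \gqbh$, that $T^n \preceq R^n \in \gnbhh{\kk_{1},\kk_{2}}$, and that $|E_n|\subseteq|\tilde E_n|$, with $E_n$ and $\tilde E_n$ as defined in Proposition~\ref{ahj}. To exhibit the explicit block-matrix representations appearing in the statement, I would then quote Proposition~\ref{ahj}(i), which identifies $T^n$ with $\big[\begin{smallmatrix} V^n & E_n \\ 0 & S^n \end{smallmatrix}\big]$ and $R^n$ with $\big[\begin{smallmatrix} \tilde V^n & \tilde E_n \\ 0 & N^n \end{smallmatrix}\big]$. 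Finally, under the additional hypothesis $N=\mathrm{mne}\, S$, Proposition~\ref{ahj}(iii) gives $N^n=\mathrm{mne}\, S^n$, which completes the proof.

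Since each claim is a verbatim citation of a part of Proposition~\ref{ahj}, I do not expect any obstacle at this stage: the genuine work is already contained in Proposition~\ref{ahj} itself, especially part~(iv), where the quasinormality of $S=N|_{\hh_2}$ is exploited (via \cite[Theorem~1]{Cam75}) to see that $\hh_2$ is invariant for $N^*N$, and where Lemma~\ref{inrx} is used to pass from the inclusion of squares $E_n^*E_n \subseteq \tilde E_n^*\tilde E_n$ to the inclusion of moduli $|E_n|\subseteq|\tilde E_n|$.
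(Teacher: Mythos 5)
Your proposal is correct and follows exactly the route the paper intends: the corollary is stated immediately after Proposition~\ref{ahj} as a direct application of its parts (i), (iii) and (iv) (using $\gqbh \subseteq \gsbh$ to place $T$ within the scope of that proposition), and the paper offers no further argument. There is nothing to add or correct.
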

   \begin{rem} \label{aabbaa}
In view of Proposition~\ref{ahj} (and under its assumptions),
for every integer $n\Ge 1$, $T^n$ has an entrywise extension
to an operator of class $\mathscr{N}$. However, it is not
true, in general, that $T^n \in \gsbh$ for all integers $n\Ge
2$. In fact, as shown in Example~\ref{rozwis}, there is an
operator $T$ of class $\mathscr{S}$ such that $T^n$ is not of
class $\mathscr{S}$ for all $n\Ge 2$. For such $T$, $|E_n|
\not\subseteq |\tilde E_n|$ for all $n\Ge 2$. Indeed, if this
were not true, then by virtue of Corollary~\ref{kluskry} and
Theorem~\ref{etrest}, $T^n$ would be of class $\mathscr{S}$
for some $n\Ge 2$, which is not the case. This shows that the
condition (ii) of Theorem~\ref{etrest} without
\eqref{pruypec} may not imply (i).
   \hfill $\diamondsuit$
   \end{rem}
   We now discuss some circumstances in which at least
one of the spaces $\hh_1$ and $\hh_2$ appearing in
Definition~\ref{defq} is finite dimensional.
   \begin{lem}\label{skwet}
Let $T=\big[\begin{smallmatrix} V & E \\
0 & S \end{smallmatrix}\big] \in \ogr{\hh_1 \oplus \hh_2}$ be
a Brownian-type operator. Then
   \begin{enumerate}
   \item[(i)] if $\dim \hh_1 < \infty$,
then $E=0$ and $V$ is unitary,
   \item[(ii)] if $T \in \ghbh$ and $\dim
\hh_2 < \infty$,
then $\big[\begin{smallmatrix} V & E \\
0 & S \end{smallmatrix}\big] \in \gnbh$.
   \end{enumerate}
   \end{lem}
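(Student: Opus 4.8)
The plan is to prove the two parts independently, each by reducing to an elementary finite-dimensional fact.

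For (i), I would begin by observing that a finite-dimensional isometry is unitary: the identity $V^*V=I_{\hh_1}$ forces $V$ to be injective, hence invertible since $\dim\hh_1<\infty$; then $V^*=V^{-1}$, so $VV^*=I_{\hh_1}$ and $\ob{V}=\hh_1$. Now \eqref{gqb-2} gives $\is{Vf}{Eh}=\is{f}{V^*Eh}=0$ for all $f\in\hh_1$ and $h\in\hh_2$, i.e.\ $\overline{\ob{E}}$ is orthogonal to $\ob{V}=\hh_1$; hence $E=0$ (equivalently, $\ob{E}\subseteq\jd{V^*}=\{0\}$). This proves (i), and it also recovers the unitarity of $V$.

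For (ii), recall that the hypothesis $T\in\ghbh$ means precisely that $S$ is hyponormal, so the self-commutator $S^*S-SS^*$ is positive. Since $\dim\hh_2<\infty$, the trace is defined on $\ogr{\hh_2}$ and $\mathrm{tr}(S^*S-SS^*)=\mathrm{tr}(S^*S)-\mathrm{tr}(SS^*)=0$. A positive operator on a finite-dimensional space whose trace vanishes is the zero operator, so $S^*S=SS^*$, that is, $S$ is normal; hence $\big[\begin{smallmatrix}V&E\\0&S\end{smallmatrix}\big]\in\gnbh$.

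No step presents a genuine difficulty; the only two points that deserve a brief justification are the classical implication ``an injective isometry on a finite-dimensional space is unitary'' (used for $V$ in (i)) and the trace argument upgrading hyponormality to normality for $S$ in (ii).
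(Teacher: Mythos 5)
Your proposal is correct and follows essentially the same route as the paper: for (i) the paper also derives $E=0$ from \eqref{gqb-2} together with the fact that finite-dimensional isometries are unitary, and for (ii) it likewise invokes the fact that hyponormal operators on finite-dimensional spaces are normal (for which your trace argument is the standard proof). You have merely written out the elementary justifications that the paper leaves as ``well known,'' which is fine.
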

   \begin{proof}
(i) The equality $E=0$ is a direct consequence of
\eqref{gqb-2} and the fact that isometries on
finite-dimensional Hilbert spaces are always unitary.

(ii) This follows from the well known fact that
hyponormal operators in finite-dimensional Hilbert
spaces are normal.
   \end{proof}
   \begin{cor} \label{zobw}
Let $\hh_1$ and $\hh_2$ be separable,
$\big[\begin{smallmatrix} V & E \\
0 & S \end{smallmatrix}\big] \in \gsbh \setminus
\gnbh$, $E\neq 0$ and $N\in\ogr{\kk_2}$ be a minimal
normal extension of $S$. Then
   \begin{align*}
\dim \hh_1 = \dim \hh_2 = \dim \kk_2 =
\dim \kk_2 \ominus \hh_2 = \aleph_0.
   \end{align*}
   \end{cor}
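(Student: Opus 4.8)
The plan is to show that each of the four quantities equals $\aleph_0$ by sandwiching it between $\aleph_0$ from above and $\aleph_0$ from below. Write $T=\big[\begin{smallmatrix} V & E \\ 0 & S \end{smallmatrix}\big]$. For the upper estimates, the separability of $\hh_1$ and $\hh_2$ gives at once $\dim\hh_1\Le\aleph_0$ and $\dim\hh_2\Le\aleph_0$. For $\kk_2$ I would invoke \eqref{mynipu}: since $N=\mathrm{mne}\,S$ we have $\kk_2=\bigvee_{n=0}^{\infty}N^{*n}\hh_2$; each $N^{*n}\hh_2$ is separable, so the union $\bigcup_{n=0}^{\infty}N^{*n}\hh_2$ is separable and hence so is its closed linear span $\kk_2$, giving $\dim\kk_2\Le\aleph_0$ and a fortiori $\dim(\kk_2\ominus\hh_2)\Le\aleph_0$.

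For the lower estimates I would feed the hypotheses into the preparatory lemmas. Since $E\neq0$, the contrapositive of Lemma~\ref{skwet}(i) shows that $\hh_1$ is infinite-dimensional, so $\dim\hh_1=\aleph_0$. Since $T\in\gsbh\subseteq\ghbh$ while $T\notin\gnbh$, the contrapositive of Lemma~\ref{skwet}(ii) shows that $\hh_2$ is infinite-dimensional, so $\dim\hh_2=\aleph_0$; and because $\hh_2\subseteq\kk_2$ this also forces $\dim\kk_2=\aleph_0$.

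It remains to treat $\kk_2\ominus\hh_2$. The assumption $T\notin\gnbh$ means that the entry $S$ is subnormal but not normal. If $\kk_2\ominus\hh_2$ were finite-dimensional, then Corollary~\ref{minfik}(ii), applied with $\hh=\hh_2$ and $\kk=\kk_2$ and using $N=\mathrm{mne}\,S$, would give $S=N$, contradicting the non-normality of $S$. Hence $\kk_2\ominus\hh_2$ is infinite-dimensional, and together with the upper bound above this yields $\dim(\kk_2\ominus\hh_2)=\aleph_0$. Collecting the four equalities completes the proof.

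I do not expect any genuine obstacle here: the corollary is a bookkeeping consequence of Lemma~\ref{skwet}, Corollary~\ref{minfik} and the description \eqref{mynipu} of minimal normal extensions. The only point requiring a little care is the separability of $\kk_2$ (hence of $\kk_2\ominus\hh_2$), which is exactly where the minimality of $N$ enters through \eqref{mynipu}.
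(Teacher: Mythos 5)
Your proof is correct and follows essentially the same route as the paper: Lemma~\ref{skwet} for the infinite-dimensionality of $\hh_1$ and $\hh_2$, and Corollary~\ref{minfik}(ii) for the infinite-dimensionality of $\kk_2\ominus\hh_2$. The only (harmless) divergence is at $\dim\kk_2$: the paper invokes the external fact that $\dim\kk_2=\dim\hh_2$ for any minimal normal extension, whereas you derive the separability of $\kk_2$ directly from \eqref{mynipu} and combine it with the inclusion $\hh_2\subseteq\kk_2$ --- a slightly more self-contained argument that suffices here.
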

   \begin{proof}
By Lemma~\ref{skwet}, both spaces $\hh_1$ an $\hh_2$
are separable and infinite dimensional. Since
$\big[\begin{smallmatrix} V & E \\
0 & S \end{smallmatrix}\big] \notin \gnbh$, we see
that $\hh_2 \varsubsetneq \kk_2$. It follows from
\cite[Proposition~1($\delta$)]{St-SzIII} that $\dim
\kk_2 = \dim \hh_2$, so $\kk_2$ is separable and
infinite dimensional. Combined with
Corollary~\ref{minfik}(ii), this implies that $\kk_2
\ominus \hh_2$ is separable and infinite dimensional.
This completes the proof.
   \end{proof}
We conclude this section by stating a result on
entrywise extensions of Brown\-ian-type operators of
class $\mathscr{N}$ relative to the first column. It
is closely related to Remark~\ref{cysyk}.
   \begin{lem} \label{nirmun}
Suppose that
$T=\big[\begin{smallmatrix} V & E \\
0 & N \end{smallmatrix}\big] \in \gnbh$, $\dcal$ is a
closed subspace of $\hh_1 \ominus \big(\ob{V} \oplus
\overline{\ob{E}}\big)$ and $\mathfrak m$ is a
cardinal number such that $\mathfrak m\Ge \dim \hh_1$.
Then there exist a Hilbert space $\kk_1\supseteq
\hh_1$ and operators $\tilde V\in \ogr{\kk_1}$ and
$\tilde E\in \ogr{\hh_2,\kk_1}$ such that
$\big[\begin{smallmatrix} \tilde V & \tilde E \\ 0 & N
\end{smallmatrix}\big] \in \gnbhh{\kk_1, \hh_2}$,
$T\preceq \big[\begin{smallmatrix} \tilde V & \tilde E
\\ 0 & N
\end{smallmatrix}\big] $, $|E|=|\tilde E|$, $\dim
\kk_1 = \mathfrak m$ and
   \begin{align*}
\dcal = \kk_1 \ominus (\ob{\tilde V}
\oplus \overline{\ob{\tilde E}}).
   \end{align*}
   \end{lem}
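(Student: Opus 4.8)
The plan is to take $\kk_1 = \hh_1 \oplus \lcal$ for a suitably chosen Hilbert-space summand $\lcal$, extend $V$ to an isometry $\tilde V$ on $\kk_1$ whose range ``absorbs'' the orthogonal complement of $\dcal$ inside the defect space $\hh_1 \ominus (\ob{V} \oplus \overline{\ob{E}})$, and let the $(1,2)$ entry change only by the inclusion $\hh_1 \hookrightarrow \kk_1$. Concretely, I would first set $\mcal := \hh_1 \ominus (\ob{V} \oplus \overline{\ob{E}})$ --- an orthogonal complement makes sense because $\ob{V}$ is closed (as $V$ is an isometry), $\overline{\ob{E}}$ is closed, and $\ob{V} \perp \overline{\ob{E}}$ by \eqref{gqb-2} --- and $\ncal := \mcal \ominus \dcal$. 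Then I would put $\tilde E \in \ogr{\hh_2, \kk_1}$ equal to $E$ followed by the isometric embedding $\hh_1 \hookrightarrow \kk_1$; this makes $\tilde E^*\tilde E = E^*E$, hence $|\tilde E| = |E|$, $\overline{\ob{\tilde E}} = \overline{\ob{E}}$, and \eqref{gqb-3} for the new block matrix (i.e.\ $N\tilde E^*\tilde E = \tilde E^*\tilde E N$) holds automatically because it holds for $T$. The construction is designed so that $\ob{\tilde V}$ will equal $\ob{V} \oplus \ncal \oplus \lcal$; since $\overline{\ob{\tilde E}} = \overline{\ob{E}}$ is orthogonal to each of $\ob{V}$, $\ncal$, $\lcal$, the new defect space then collapses to $\kk_1 \ominus (\ob{\tilde V} \oplus \overline{\ob{\tilde E}}) = \dcal$ via the decomposition $\kk_1 = \ob{V} \oplus \overline{\ob{E}} \oplus \dcal \oplus \ncal \oplus \lcal$.

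For the first column, the key step is to fix an isometric isomorphism $J \colon \lcal \to \ncal \oplus \lcal$ (where $\ncal \oplus \lcal$ is viewed inside $\kk_1 = \hh_1 \oplus \lcal$, legitimately since $\ncal \subseteq \mcal \subseteq \hh_1$) and define $\tilde V(h + g) := Vh + Jg$ for $h \in \hh_1$, $g \in \lcal$. Since $\ob{V} \perp \ncal$ (as $\ncal \subseteq \mcal$) and $\ob{V} \perp \lcal$ (as $\ob{V} \subseteq \hh_1$), this $\tilde V$ is an isometry extending $V$ with $\ob{\tilde V} = \ob{V} \oplus \ncal \oplus \lcal$. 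The remaining conditions are then immediate: $\tilde V^*\tilde E = 0$ because $\ob{\tilde E} = \ob{E}$ is orthogonal to $\ob{\tilde V}$ (here $\overline{\ob{E}} \perp \ob{V}$ by \eqref{gqb-2}, $\overline{\ob{E}} \perp \ncal$ since $\ncal \subseteq \mcal$, and $\overline{\ob{E}} \perp \lcal$ since $\overline{\ob{E}} \subseteq \hh_1$); $N$ is normal by hypothesis; and $\kk_1 \supseteq \hh_1 \neq \{0\}$, $\hh_2 \neq \{0\}$, so $\big[\begin{smallmatrix}\tilde V & \tilde E\\ 0 & N\end{smallmatrix}\big] \in \gnbhh{\kk_1, \hh_2}$. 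Finally $T \preceq \big[\begin{smallmatrix}\tilde V & \tilde E\\ 0 & N\end{smallmatrix}\big]$ holds because $\hh_1 \subseteq \kk_1$, $V \subseteq \tilde V$, $\tilde E|_{\hh_2} = E$, and the $(2,1)$ and $(2,2)$ entries agree; the identities $|E| = |\tilde E|$ and $\dcal = \kk_1 \ominus (\ob{\tilde V} \oplus \overline{\ob{\tilde E}})$ are as noted above.

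The step I expect to require the most care is the choice of $\lcal$, which must simultaneously satisfy $\dim(\hh_1 \oplus \lcal) = \mathfrak m$ and admit an isometric isomorphism onto $\ncal \oplus \lcal$. If $\ncal = \{0\}$ this is easy: any $\lcal$ with $\dim(\hh_1 \oplus \lcal) = \mathfrak m$ works (such $\lcal$ exists since $\mathfrak m \Ge \dim \hh_1$), with $J$ the identity. If $\ncal \neq \{0\}$, I would first invoke Lemma~\ref{skwet}(i) to conclude that $\hh_1$ is infinite-dimensional --- for if $\dim \hh_1 < \infty$ then $E = 0$ and $V$ is unitary, so $\ob{V} = \hh_1$ and $\mcal = \ncal = \{0\}$, contradicting $\ncal \neq \{0\}$ --- whence $\mathfrak m$ is an infinite cardinal; taking $\dim \lcal = \mathfrak m$ then gives $\dim(\hh_1 \oplus \lcal) = \mathfrak m$, and since $\dim \ncal \Le \dim \mcal \Le \dim \hh_1 \Le \mathfrak m$ with $\mathfrak m$ infinite, $\dim(\ncal \oplus \lcal) = \mathfrak m = \dim \lcal$, so the desired $J$ exists. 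With $\lcal$ and $J$ fixed, everything else is routine bookkeeping of orthogonality relations among $\ob{V}$, $\overline{\ob{E}}$, $\dcal$, $\ncal$ and $\lcal$.
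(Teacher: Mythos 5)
Your construction is essentially the same as the paper's: you adjoin an auxiliary summand $\lcal$ to $\hh_1$, keep $\tilde E$ equal to $E$ composed with the inclusion $\hh_1\hookrightarrow\kk_1$, and choose an isometric isomorphism of $\lcal$ onto $\ecal\oplus\lcal$ (your $\ncal\oplus\lcal$) so that $\ob{\tilde V}$ absorbs the part of the defect space orthogonal to $\dcal$ --- exactly the paper's map $W$ in \eqref{firm}, with the same appeal to Lemma~\ref{skwet}(i) to handle the degenerate case and the same cardinal arithmetic to arrange $\dim\kk_1=\mathfrak m$. The only difference is cosmetic (you split on $\ncal=\{0\}$ versus $\ncal\neq\{0\}$ rather than on $\dim\hh_1<\infty$ versus $\dim\hh_1\Ge\aleph_0$), and the argument is correct.
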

   \begin{proof}
Consider first the case when $\hh_1$ is
finite dimensional. By Lemma~\ref{skwet},
$E=0$, $V$ is unitary and $\dcal =
\{0\}$. Let $\ncal$ be a Hilbert space
such that
   \begin{align*}
\dim \ncal =
   \begin{cases}
\mathfrak m & \text{if } \mathfrak m \Ge
\aleph_0,
   \\
\mathfrak m - \dim \hh_1 &
\text{otherwise.}
   \end{cases}
   \end{align*}
Define the Hilbert space $\kk_1$ and the
operators $\tilde V \in \ogr{\kk_1}$ and
$\tilde E \in \ogr{\hh_2,\kk_1}$ by
   \begin{align} \label{firm}
\left. \begin{aligned} \kk_1&=\hh_1
\oplus \ncal,
   \\
\tilde V (f\oplus g) &= Vf \oplus Wg,
\quad f\in \hh_1, \, g \in \ncal,
   \\
\tilde E f&= E f\oplus 0, \quad f\in
\hh_2,
   \end{aligned}
   \hspace{1ex}\right\}
   \end{align}
where $W\colon \ncal \to \ncal$ is any unitary
operator (cf.\ \eqref{sum}). It is easy to see that
$\big[\begin{smallmatrix} \tilde V & \tilde E \\ 0 & N
\end{smallmatrix}\big]$ meets our requirements.

Suppose now that $\dim \hh_1 \Ge
\aleph_0$. Set $\ecal = \hh_1 \ominus
(\ob{V} \oplus \overline{\ob{E}} \oplus
\dcal)$. Then
   \begin{align}  \label{hikre}
\hh_1=\ob{V} \oplus \overline{\ob{E}}
\oplus \dcal \oplus \ecal.
   \end{align}
Take a Hilbert space $\ncal$ such that $\dim \ncal=\mathfrak
m$. Since $\dim \ecal \Le \dim \hh_1 \Le \mathfrak m$ and
$\dim \hh_1 \Ge \aleph_0$, we have
   \begin{align*}
\mathfrak m=\dim \ncal = \dim \ecal
\oplus \ncal.
   \end{align*}
This implies that there exists a unitary
operator $W \colon \ncal \to \ecal \oplus
\ncal$, where $\ecal \oplus \ncal$ is
regarded as a subspace of $\hh_1 \oplus
\ncal$. Define the Hilbert space $\kk_1$
and the operators $\tilde V \in
\ogr{\kk_1}$ and $\tilde E \in
\ogr{\hh_2,\kk_1}$ by \eqref{firm} with
the current choice of $\ncal$ and $W$. In
view of \eqref{hikre}, $\tilde V$ is well
defined. It is routine to check that in
this case the operator
$\big[\begin{smallmatrix} \tilde V &
\tilde E \\ 0 & N
\end{smallmatrix}\big]$  also meets our requirements.
This completes the proof.
   \end{proof}
   \section{\label{Sec2}Proofs of Theorems~\ref{etrest}, \ref{mudyl}
and \ref{mude}} We begin by providing a basic construction of
taut entrywise extensions of operators of class
$\mathscr{S}$.
   \begin{bc*}
Suppose that $T=\big[\begin{smallmatrix} V & E \\
0 & S \end{smallmatrix}\big] \in \gsbh$ and $N\in
\ogr{\kk_2}$ is a minimal normal extension of $S$. We will
give a method of constructing entrywise extensions
$\big[\begin{smallmatrix} \tilde V & \tilde E \\ 0 & N
\end{smallmatrix}\big] \in \gnbhh{\kk_1, \kk_2}$ of $T$ such
that $|E| \subseteq |\tilde E|$.

According to the Bram lifting theorem (see
\cite[Theorem~II.10.5]{Co91}), there exists a map $\psi$ such
that \allowdisplaybreaks
   \begin{align} \label{pusyya}
&\textit{$\psi\colon \{S,S^*\}' \to \{N\}' \cap \{P\}'$ is a
$*$-isomorphism and}
   \\ \label{pusyy}
&\textit{$A \subseteq \psi(A)$ for all $A \in \{S,S^*\}'$,}
   \end{align}
where $P\in \ogr{\kk_2}$ is the orthogonal projection of
$\kk_2$ onto $\hh_2$ (recall that
$\psi(I_{\hh_2})=I_{\kk_2}$). Observe that if $A \in
\{S,S^*\}'$ and $A\Ge 0$, then by the square root theorem
$A^{1/2} \in \{S,S^*\}'$. Hence, by \eqref{pusyya}, $\psi(A)
= \psi(A^{1/2})^*\psi(A^{1/2}) \Ge 0$, which shows that
   \begin{align} \label{pusyt}
\textit{if $A \in \{S,S^*\}'$ and $A\Ge 0$, then $\psi(A) \Ge
0$.}
   \end{align}
Next, note that by \eqref{gqb-3b}, $|E| \in \{S,S^*\}'$.
Thus, by \eqref{pusyya}-\eqref{pusyt} we have
\allowdisplaybreaks
   \begin{gather} \label{dcsweq}
|E| \subseteq \psi(|E|),
   \\ \label{dcsweq15}
\psi(|E|) \Ge 0,
   \\ \label{dcsweq3}
|E|^2 \subseteq \psi(|E|^2) =
\psi(|E|)^*\psi(|E|)=\psi(|E|)^2,
   \\  \label{dcsweq13}
\psi(|E|)N = N \psi(|E|).
   \end{gather}
Let $\mcal_0$ be a Hilbert space of the same dimension as
$\kk_2 \ominus \overline{\ob{|E|}}$. Take a unitary operator
$U_2\colon \kk_2 \ominus \overline{\ob{|E|}} \to \mcal_0$.
Set $\ecal = \hh_1 \ominus (\ob{V} \oplus \overline{\ob{E}}
\oplus \dcal)$. Then
   \begin{align} \label{dipost}
\hh_1=\ob{V} \oplus \overline{\ob{E}} \oplus \dcal \oplus
\ecal.
   \end{align}
Observe that there exists a Hilbert space $\mcal_1$ such that
   \begin{align} \label{cimosz}
\dim \ecal \oplus \mcal_1 = \dim \mcal_0 \oplus \mcal_1.
   \end{align}
Take an arbitrary unitary operator
   \begin{align*}
W\colon \mcal_0 \oplus \mcal_1 \to \ecal \oplus \mcal_1,
   \end{align*}
where $\ecal \oplus \mcal_1$ is regarded as a subspace of
$\hh_1 \oplus \mcal_1$. Set
   \begin{align} \label{krk1}
\kk_1= \hh_1 \oplus \mcal_0 \oplus \mcal_1.
   \end{align}
Let $E=U_1|E|$ be the polar decomposition of $E$. Since
$U_1|_{\overline{\ob{|E|}}}$ is an isometry and
$U_1(\overline{\ob{|E|}}) = \overline{\ob{E}}$, we see that
the map $U\colon \kk_2 \to \kk_1$ given by
   \begin{align} \label{shhands}
U(f \oplus g) = U_1 f\oplus U_2 g, \quad f \in
\overline{\ob{|E|}}, \, g \in \kk_2 \ominus
\overline{\ob{|E|}},
   \end{align}
is a well defined isometric operator. Define the operator
$\tilde E \in \ogr{\kk_2,\kk_1}$ by
   \begin{align} \label{dfesq}
\tilde E=U \psi(|E|).
   \end{align}
Note that
   \begin{align*}
\tilde E f \overset{\eqref{dcsweq}}= U |E| f
\overset{\eqref{shhands}}= U_1 |E| f = Ef, \quad f\in \hh_2.
   \end{align*}
This shows that $E \subseteq \tilde E$. Since by
\eqref{dcsweq15}, $\psi(|E|)^*=\psi(|E|)$ we deduce that
   \begin{align} \label{dcsweq11}
|E|^2 \overset{\eqref{dcsweq3}}\subseteq \psi(|E|)^*
\psi(|E|) \overset{\eqref{dfesq}}=\tilde E^* \tilde E =
|\tilde E|^2,
   \end{align}
which by Lemma~\ref{inrx} yields \eqref{pruypec}. We infer
from \eqref{dcsweq3}, \eqref{dcsweq13} and \eqref{dcsweq11}
that $N$ commutes with $\tilde E^* \tilde E$. Define the
isometric operator $\tilde V \in \ogr{\kk_1}$ by
   \begin{align} \label{nutusse}
\tilde V (f \oplus g \oplus h) = Vf \oplus W(g\oplus h),
\quad f \in \hh_1, \, g \in \mcal_0, \, h \in \mcal_1.
   \end{align}
Clearly $V \subseteq \tilde V$. Since
$U_1(\overline{\ob{|E|}}) = \overline{\ob{E}}$, we infer from
\eqref{shhands} that
   \begin{align} \label{drubc}
\ob{U} = \overline{\ob{E}} \oplus \mcal_0 \oplus \{0\}.
   \end{align}
On the other hand, by \eqref{nutusse}, we have
   \begin{align} \label{drubcer}
\ob{\tilde{V}} = (\ob{V} \oplus \ecal) \oplus \{0\} \oplus
\mcal_1.
   \end{align}
Hence, by \eqref{gqb-2} and \eqref{dfesq}, the ranges of
$\tilde V$ and $\tilde E$ are orthogonal. As a consequence,
$\tilde V^* \tilde E = 0$. This shows that
$\big[\begin{smallmatrix} \tilde V & \tilde E \\ 0 & N
\end{smallmatrix}\big]$ has the
desired properties.
      \hfill $\diamondsuit$
   \end{bc*}
   Now we are ready to prove the main theorem.
   \begin{proof}[Proof of Theorem~\ref{etrest}]
(ii)$\Rightarrow$(i) Since $\big[\begin{smallmatrix} \tilde V
& \tilde E \\ 0 & N \end{smallmatrix}\big] \in \gnbhh{\kk_1,
\kk_2}$ is an entrywise extension
of $\big[\begin{smallmatrix} V & E \\
0 & S \end{smallmatrix}\big]$, it follows
that $V$ is an isometry, $S$ is subnormal
with the normal extension $N$ and the
ranges of $V$ and $E$ are orthogonal.
This implies that the conditions
\eqref{gqb-1} and \eqref{gqb-2} are
satisfied. The condition \eqref{gqb-3b}
(with $X=S$) is a consequence of
\eqref{pruypec} and the assumption that
$N$ commutes with $\tilde E^* \tilde E$.
This shows that (i) holds.

Next we prove (a) and (b). For, assume that
$\big[\begin{smallmatrix} \tilde V & \tilde E \\
0 & N \end{smallmatrix}\big]$ is as in (ii) and
$N=\mathrm{mne}\,S$.

   (a) Since by the square root theorem $N$ and
$|\tilde E|$ commute, we infer from \eqref{pruypec}
that $|\tilde E| \in \{N\}' \cap \{P\}'$. This implies
that $|\tilde E|$ is the lift of $|E|$ relative to
$\{N\}'$, i.e.,
   \begin{align} \label{lufy}
|\tilde E| = \psi(|E|),
   \end{align}
where $\psi$ is as in \eqref{pusyya} and \eqref{pusyy}.

   (b) The case of injectivity is a
consequence of \eqref{luinw},
\eqref{pusyya}, \eqref{lufy} and
Lemma~\ref{inyec} applied to von Neumann
algebras $\acal:=\{S,S^*\}^{\prime}$ and
$\bcal:=\{N\}^{\prime} \cap
\{P\}^{\prime}$ and the positive element
$A:=|E|$. In turn, the left-invertibility
case can be deduced from \eqref{luinw},
\eqref{pusyya}, \eqref{lufy} and
\cite[Theorem~11.29]{Rud73}. Finally, the
case of isometricity goes as follows: if
$E$ is an isometry, then by (a), $|\tilde
E|$ is the lift of $|E|=I_{\hh_2}$
relative to $\{N\}'$, so by the
uniqueness of lifts, $|\tilde
E|=I_{\kk_2}$, which means that $\tilde
E$ is an isometry (the converse is
trivial because $E \subseteq \tilde E$).

(i)$\Rightarrow$(iii) We begin by showing
that (iii) holds except for the equation
   \begin{align} \label{dumriq}
\dim \kk_1 = \dim \hh_1.
   \end{align}
Note that the other required equality
$\dim\kk_2 = \dim \hh_2$ follows from the
general fact that if $N\in \ogr{\kk}$ is
a minimal normal extension of a subnormal
operator $S\in \ogr{\hh}$, then $\dim
\kk=\dim \hh$ (see e.g.,
\cite[Proposition~1($\delta$)]{St-SzIII}).

Let us first consider the case when $E$
is injective. Suppose that $\big[\begin{smallmatrix} V & E \\
0 & S \end{smallmatrix}\big] \in \gsbh$ and $\jd{E}=\{0\}$.
Let $N\in \ogr{\kk_2}$ be a minimal normal extension of $S$
and let $\kk_1$, $U$, $\tilde V$ and $\tilde E$ be as in
Basic Construction. According to \eqref{luinw} and (b), the
operator $|\tilde E|$ is injective. Hence, by \eqref{lufy},
$\overline{\ob{\psi(|E|)}} = \kk_2$. Consequently, in view of
\eqref{dfesq}, $\ob{U}=\overline{\ob{\tilde E}}$. Combined
with \eqref{drubc} and \eqref{drubcer}, this implies~that
   \begin{align*}
\ob{\tilde V} \oplus \overline{\ob{\tilde
E}} = (\ob{V} \oplus \overline{\ob{E}}
\oplus \ecal) \oplus \mcal_0 \oplus
\mcal_1.
   \end{align*}
Hence, by \eqref{dipost}, the equality
\eqref{rozkly} holds.

Consider now the case when $E$ is not injective. By
\eqref{reotp}, $\jd{E}=\jd{|E|}$ reduces $S$ and $|E|$. Let
$S=S_1 \oplus S_2$ and $E=E_1 \oplus E_2$ be the orthogonal
decompositions of $S$ and $E$ relative to the orthogonal
decomposition $\hh_2 = \jd{E} \oplus \overline{\ob{|E|}}$,
where $E_1\colon \jd{E} \to \{0\}$ and $E_2\colon
\overline{\ob{|E|}} \to \hh_1$. Clearly, both operators $S_1$
and $S_2$ are subnormal. It is easy to see that
$\big[\begin{smallmatrix} V & E_2 \\
0 & S_2 \end{smallmatrix}\big] \in
\mathscr{S}_{\hh_1,\overline{\ob{|E|}}}$ and $\jd{E_2} =
\{0\}$. Since $\ob{E}=\ob{E_2}$, we obtain
   \begin{align} \label{ztugi}
\dcal \subseteq \hh_1 \ominus \big(\ob{V}
\oplus \overline{\ob{E}}\big) = \hh_1
\ominus \big(\ob{V} \oplus
\overline{\ob{E_2}}\big).
   \end{align}
Let $N\in \ogr{\kk_2}$ be a minimal
normal extension of $S$. By
Lemma~\ref{simurt}, $N$ has the
orthogonal decomposition $N=N_1 \oplus
N_2$ relative to an orthogonal
decomposition $\kk_2=\kk_{21} \oplus
\kk_{22}$, where $N_j=\mathrm{mne}\, S_j$
for $j=1,2$. Using \eqref{ztugi} and
applying the case
of injectivity to $\big[\begin{smallmatrix} V & E_2 \\
0 & S_2 \end{smallmatrix}\big]$ and
$N_2$, we deduce that there exist a
Hilbert space $\kk_1$ and operators
$\tilde V\in \ogr{\kk_1}$ and $\tilde E_2
\in \ogr{\kk_{22},\kk_1}$ such that
$\big[\begin{smallmatrix} \tilde V &
\tilde E_2 \\ 0 & N_2
\end{smallmatrix}\big] \in \gnbhh{\kk_1,
\kk_{22}}$,  $\big[\begin{smallmatrix} V & E_2 \\
0 & S_2 \end{smallmatrix}\big] \preceq
\big[\begin{smallmatrix} \tilde V &
\tilde E_2 \\ 0 & N_2
\end{smallmatrix}\big]$,
$|E_2| \subseteq |\tilde E_2|$ and
   \begin{align} \label{slonce}
\dcal=\kk_1 \ominus (\ob{\tilde V} \oplus
\overline{\ob{\tilde E_2}}).
   \end{align}
Define $\tilde E \in \ogr{\kk_2,\kk_1}$
by $\tilde E = 0 \oplus \tilde E_2$,
where now $0$ is the zero operator from
$\kk_{21}$ to $\{0\}$. It is a matter of
routine to verify that
$\big[\begin{smallmatrix} \tilde V &
\tilde E \\ 0 & N
\end{smallmatrix}\big] \in \gnbhh{\kk_1,
\kk_2}$ and that
$\big[\begin{smallmatrix} \tilde V &
\tilde E \\ 0 & N
\end{smallmatrix}\big]$ is
an entrywise extension of $\big[\begin{smallmatrix} V & E \\
0 & S \end{smallmatrix}\big]$ that
satisfies \eqref{pruypec}. Since
$\ob{\tilde E} = \ob{\tilde E_2}$, we see
that \eqref{rozkly} follows from
\eqref{slonce}.

It remains to show that the space $\kk_1$ can be chosen so
that \eqref{dumriq} holds. Since the case when $S$ is normal
follows from Lemma~\ref{nirmun} applied to $\mathfrak m=\dim
\hh_1$, we can assume that $S$ is not normal. In view of what
was proved in the previous paragraph, it is enough to
consider the case when $E$ is injective. Let $\kk_1$, $U$,
$\tilde V$ and $\tilde E$ be as in Basic Construction.
Observe that $\dim \kk_2 \ominus \overline{\ob{|E|}} \Ge
\aleph_0$ (otherwise $\dim \kk_2 \ominus \hh_2 < \aleph_0$,
so by Corollary~\ref{minfik}(ii), $S$ is normal, a
contradiction). Recall that $\dim \kk_2 = \dim \hh_2$. Since
   \begin{align} \label{sar1}
\aleph_0 \Le \dim \mcal_0 = \dim \kk_2
\ominus \overline{\ob{|E|}} \Le \dim
\kk_2 = \dim \hh_2,
   \end{align}
and by \eqref{dipost}, $\dim \ecal \Le
\dim \hh_1$, the space $\mcal_1$
satisfying \eqref{cimosz} can be chosen
so that
   \begin{align*}
\dim \mcal_1 = \max\{\dim \hh_1,
\dim\hh_2\}.
   \end{align*}
Combined with \eqref{krk1} and
\eqref{sar1}, this implies that
   \begin{align*}
\dim \kk_1 = \max\{\dim \hh_1,
\dim\hh_2\}.
   \end{align*}
Since by \eqref{luinw}, $|E|$ is
injective and consequently
$\overline{\ob{|E|}}=\hh_2$, we infer
from \eqref{tres} that \eqref{dumriq} is
valid.

(iii)$\Rightarrow$(ii) Obvious.

This completes the proof.
   \end{proof}
   \begin{rem}
Regarding the proof of Theorem~\ref{etrest}, we observe the
following.
   \begin{align} \label{ebifol}
   \begin{minipage}{75ex}
{\em If $A \in \{S,S^*\}'$, $A \Ge 0$ and $\alpha
>0$, then $A^{\alpha} \in \{S,S^*\}'$ and
$\psi(A^{\alpha})=\psi(A)^{\alpha}$}.
   \end{minipage}
   \end{align}
Consequently, $\psi(|X|)=|\psi(X)|$ for every $X \in
\{S,S^*\}'$. These are general facts which are valid for
$*$-homomorphisms of $C^*$-algebras (cf.\ the proof of
Lemma~\ref{inrx}). In particular, we have
   \begin{align*}
\psi(|E|^{\alpha})=\psi(|E|)^{\alpha}
\overset{\eqref{lufy}}=|\tilde E|^{\alpha}.
   \end{align*}
The assertion \eqref{ebifol} can also be deduced from
\eqref{pusyya}, \eqref{pusyy}, Lemma~\ref{inrx} and the
uniqueness of lifts.

It is also worth pointing out that in view of \eqref{dfesq}
and \eqref{lufy},
   \begin{align*}
\tilde E= U \psi(|E|) = U |\tilde E|.
   \end{align*}
This resembles the polar decomposition of $\tilde E$.
However, since $U$ is an isometry, it can be verified that
$U$ coincides with the partial isometry factor in the polar
decomposition of $\tilde E$ if and only if $\tilde E$ is
injective. \hfill $\diamondsuit$
   \end{rem}
   \begin{proof}[Proof of Theorem~\ref{mudyl}]
In view of \eqref{luinw}, $|E|$ is injective and thus
$\overline{\ob{|E|}} = \hh_2$, which implies that $W$
is an isometry.

(i) Let $\big[\begin{smallmatrix} \tilde V & \tilde E
\\ 0 & N \end{smallmatrix}\big] \in \gnbhh{\kk_1,
\kk_2}$ be an entrywise extension of $T$ such that
$|E| \subseteq |\tilde E|$. Then, by
Theorem~\ref{etrest}, $|\tilde E|$ is the lift of
$|E|$ relative to $\{N\}'$ and $|\tilde E|$ is
injective. This implies that $\overline{\ob{|\tilde
E|}}=\kk_2$. As a consequence, if $\tilde E=U|\tilde
E|$ is the polar decomposition of $\tilde E$, then
$\tilde E$ is injective, $U\in \ogr{\kk_2,\kk_1}$ is
an isometry, $\ob{U}=\overline{\ob{\tilde E}}$ and
$\ob{\tilde V} \perp \ob{U}$. Since
$\overline{\ob{|E|}} = \hh_2$ and
   \begin{align*}
W(|E|f) = Ef = \tilde E f = U|\tilde{E}|f
\overset{(*)}= U(|E|f), \quad f \in \hh_2,
   \end{align*}
where $(*)$ follows from $|E|\subseteq |\tilde{E}|$,
we conclude that $W \subseteq U$.

(ii) It follows from \eqref{pusyt} that the operator
$B$ is positive. Set $\tilde E=UB$. Then, by the
square root theorem, $|\tilde E|=B$. Since $|E|$ is
injective, we infer from Lemma~\ref{inyec} and
\eqref{pusyya} that $B$ is injective as well. As a
consequence, $\overline{\ob{B}}=\kk_2$, which implies
that $\tilde E=UB$ is the polar decomposition of
$\tilde{E}$. In particular,
$\overline{\ob{\tilde{E}}}=\ob{U}$.
Therefore, we see that $\big[\begin{smallmatrix} \tilde V & \tilde E \\
0 & N \end{smallmatrix}\big] \in \gnbhh{\kk_1,
\kk_2}$, $\big[\begin{smallmatrix} V & E \\ 0 & S
\end{smallmatrix}\big] \preceq \big[\begin{smallmatrix}
\tilde V & \tilde E \\ 0 & N \end{smallmatrix}\big]$
and $|E| \subseteq |\tilde E|$. This completes the
proof.
   \end{proof}
   \begin{proof}[Proof of Theorem~\ref{mude}]
The conditions (a) and (b) follow from \eqref{reotp}. The
condition (c) is a direct consequence of (b). Fix $j\in
\{1,2\}$. It follows from (c) that $\hh_{2j}$ reduces $|E|$
and $|E| = |E_1| \oplus |E_2|$, so by $|E| \in \{S\}'$, we
see that $|E_j| \in \{S_j\}'$. Since $\ob{E}=\ob{E_2}$, we
deduce that $\big[\begin{smallmatrix} V & E_j \\
0 & S_j \end{smallmatrix}\big] \in
\gsbhh{\hh_1, \hh_{2j}}$. By
Theorem~\ref{etrest}(a), $|\tilde E|$ is
the lift of $|E|$ relative to $\{N\}'$.
It follows from Lemma~\ref{simurt} that
$\kk_{2j}$ reduces $|\tilde E|$ and
$|\tilde E||_{\kk_{2j}}$ is the lift of
$|E_j|$ relative to $\{N_j\}'$. Hence,
we~have
   \begin{align*}
\is{|\tilde E_j|^2 f}{g}=\is{\tilde E_j
f}{\tilde E_jg} = \is{\tilde E f}{\tilde
Eg} = \is{(|\tilde
E||_{\kk_{2j}})^2f}{g}, \quad f,g \in
\kk_{2j}.
   \end{align*}
so by the uniqueness of square roots,
$|\tilde E_j|=|\tilde E||_{\kk_{2j}}$.
This implies that $|\tilde E_j|$ is the
lift of $|E_j|$ relative to $\{N_j\}'$
and, in view of $\ob{\tilde E_j}\subseteq
\ob{\tilde E}$, that
$\big[\begin{smallmatrix} \tilde V &
\tilde E_j
\\ 0 & N_j \end{smallmatrix}\big] \in
\gnbhh{\kk_1, \kk_{2j}}$. This proves
(d), (e) and (f). Since $|E_1|=0$ and
$|\tilde E_1|$ is the lift of $|E_1|$
relative to $\{N_1\}'$, we deduce that
$|\tilde E_1|=0$. In turn, because $E_2$
is injective, we infer from
Theorem~\ref{etrest}(b) that $\tilde E_2$
is injective. Hence (g) holds. This
completes the proof.
   \end{proof}
   \section{\label{Sec.4}Examples}
In this section, we give four examples illustrating
the subject of the paper. The first shows that
operators of class $\mathscr{S}$ may not be subnormal,
the second deals with extensions that are not
entrywise, the third discusses the relationship
between the defect spaces of an operator of class
$\mathscr{S}$ and its taut entrywise extension, and
finally the fourth shows that powers of operators of
class $\mathscr{S}$ may not be of class $\mathscr{S}$.

We begin by providing examples of operators of class
$\mathscr{S}$ (or even of class $\mathscr{U}$), which
do not have extensions to normal operators, but,
according to Theorem~\ref{etrest}, have taut entrywise
extensions to operators of class $\mathscr{N}$ (cf.\
Lemma \ref{nonur}).
   \begin{exa}\label{noistx}
Suppose that $V\in \ogr{\hh_1}$, $U\in \ogr{\hh_2,\hh_1}$ and
$Q\in \ogr{\hh_2}$ are isometries such that $\ob{V}\perp
\ob{U}$. Let $\alpha$ be a nonzero complex number.
Then $T_{\alpha}=\big[\begin{smallmatrix} V & \alpha U \\
0 & Q \end{smallmatrix}\big] \in\gibh$. Since $\|Q\|^2 +
|\alpha|^2 > 1$, it follows from
\cite[Corollary~5.3]{C-J-J-S21} that $T_{\alpha}$ is not
subnormal. In particular, if $Q$ is unitary, then
$T_{\alpha}$ is a Brownian isometry, and if $Q$ is unitary,
$|\alpha|=1$ and $\hh_1=\ob{V}\oplus \ob{U}$, then
$T_{\alpha}$ is a Brownian unitary (see the next paragraph
for definition).
   \hfill $\diamondsuit$
   \end{exa}
It is well-known that any $2$-isometry has an extension to a
Brownian unitary (see \cite[Theorem~5.80]{Ag-St}). However,
as shown in Example~\ref{noistn} below, there are
$2$-isometric Brownian-type operators of class $\mathscr{Q}$
which have no entrywise extensions to Brownian unitaries.
Hence, in general, extensions may not be entrywise. Before
providing such an example, we recall that the quantity
$\sqrt{\|A^*A-I\|}$ is called the {\em covariance} of an
operator $A\in\ogr{\hh}$. Given $\sigma \in (0,\infty)$, we
say that an operator $T$ is a {\em Brownian unitary of
covariance} $\sigma$ (see \cite[Proposition~5.12]{Ag-St}) if
$T=\big[\begin{smallmatrix} X & \sigma Y \\
0 & Z \end{smallmatrix}\big] \in \ogr{\hh_1 \oplus
\hh_2}$, where
   \begin{align}\label{cyrul}
   \begin{minipage}{74ex}
$X\in \ogr{\hh_1}$ and $Y\in\ogr{\hh_2,\hh_1}$ are
isometries such that $\ob{X}=\jd{Y^*}$ and $Z\in
\ogr{\hh_2}$ is unitary.
   \end{minipage}
   \end{align}
By a {\em Brownian unitary of covariance} $0$, we mean any
unitary operator. In both cases $\sigma^2=\|T^*T-I\|$.
Clearly, each Brownian unitary of covariance $\sigma > 0$ is
of class $\mathscr{U}$.
   \begin{exa} \label{noistn}
Suppose that $\lcal$, $\mcal$ and $\ncal$ are separable
infinite dimensional Hilbert spaces. Let $U_1, U_2\in
\ogr{\mcal}$ be isometries such that $U_1$ is not unitary,
$D\in \ogr{\lcal}$ be a nonzero operator and $A\in
\ogr{\ncal}$ be a selfadjoint operator such that $0 \Le A \Le
I$. Then $I-A^2$ is a positive contraction. Define the
Hilbert spaces $\hh_1$ and $\hh_2$, and the operators $E \in
\ogr{\hh_2,\hh_1}$ and $Q\in \ogr{\hh_2}$ by
   \allowdisplaybreaks
   \begin{align} \notag
\hh_i&=\lcal \oplus (\mcal \otimes \ncal), \quad
i=1,2,
   \\ \label{samsam}
E &= D \oplus (U_1\otimes \sqrt{I-A^2}),
   \\ \label{qqqqq}
Q&=I_{\lcal} \oplus (U_2\otimes A).
   \end{align}
Note that if $\ff$ and $\kk$ are Hilbert spaces, and
$\ff_1$ and $\kk_1$ are their closed subspaces,
respectively, then
   \begin{align} \label{terten}
(\ff \otimes \kk) \ominus (\ff_1 \otimes \kk_1) =
[(\ff\ominus \ff_1) \otimes \kk] \oplus [\ff_1 \otimes
(\kk \ominus \kk_1)].
   \end{align}
Since $I+A$ is injective and consequently
$\jd{I-A^2}=\jd{I-A}$, it follows from \eqref{terten}
applied to $\ff=\mcal$, $\kk=\ncal$, $\ff_1=\ob{U_1}$
and $\kk_1=\overline{\ob{\sqrt{I-A^2}}}$ that
   \allowdisplaybreaks
   \begin{align*}
\hh_1 \ominus \overline{\ob{E}} & = [\lcal \ominus
\overline{\ob{D}}] \oplus [\jd{U_1^*} \otimes \ncal]
\oplus [\ob{U_1} \otimes \jd{\sqrt{I-A^2}}]
   \\
& = [\lcal \ominus \overline{\ob{D}}] \oplus
[\jd{U_1^*} \otimes \ncal] \oplus [\ob{U_1} \otimes
\jd{I-A}].
   \end{align*}
Since $U_1$ is not unitary, $\dim \jd{U_1^*} \Ge 1$, so $\dim
(\jd{U_1^*} \otimes \ncal)=\aleph_0$. Combined with $\dim
\hh_1=\aleph_0$, this implies that there exists a non-unitary
isometry $V\in \ogr{\hh_1}$ such that $\ob{V} \perp \ob{E}$.
It is a matter of routine to verify that  $T:= \big[\begin{smallmatrix} V & E \\
0 & Q \end{smallmatrix}\big] \in \gqbh$~and
   \begin{align*}
(|Q|^2-I)(|Q|^2 + |E|^2 - I)=0.
   \end{align*}
Using \cite[Theorem~7.1]{C-J-J-S21}, we conclude that
$T$ is a $2$-isometry of covariance $\|D\|>0$ (in
particular, $T$ is not an isometry). It follows from
\cite[Theorem~5.80]{Ag-St} that $T$ has an extension
to a Brownian unitary of covariance $\|D\|$. We show
that $T$ has no entrywise extension to a Brownian
unitary of arbitrary covariance $\sigma$. Since $T$ is
not an isometry, it is sufficient to consider the case
when $\sigma >0$. Suppose, on the contrary, that $T$
has an entrywise extension to a Brownian unitary of
covariance $\sigma > 0$, that is, $T\preceq
\big[\begin{smallmatrix} X & \sigma Y \\ 0 & Z
\end{smallmatrix}\big]$ with $X$, $Y$ and $Z$
satisfying \eqref{cyrul}. In particular, we have
$Q\subseteq Z$ and $E\subseteq \sigma Y$. Since $Z$ is
unitary, $Q$ must be an isometry. Therefore, by
\eqref{qqqqq}, $U_2 \otimes A$ is an isometry, which
implies that $A=I_{\ncal}$. As a consequence of
\eqref{samsam}, $E= D \oplus 0$, where $0$ is the zero
operator on the infinite dimensional Hilbert space
$\mcal \otimes \ncal$. However, this contradicts
$E\subseteq \sigma Y$ (because, by \eqref{cyrul}, $Y$
is an isometry).~\hfill $\diamondsuit$
   \end{exa}
In Example~\ref{tryisu} below, we show that there
exist Brownian-type operators $T=\big[\begin{smallmatrix} V & E \\
0 & S \end{smallmatrix}\big] \in \gsbh$ having taut
entrywise extensions
$\big[\begin{smallmatrix} \tilde V & \tilde E \\
0 & N \end{smallmatrix}\big] \in \gnbhh{\kk_1, \kk_2}$
for which the dimensions of the defect spaces $\hh_1
\ominus \big(\ob{V} \oplus \overline{\ob{E}}\big)$ and
$\kk_1 \ominus (\ob{\tilde V} \oplus
\overline{\ob{\tilde E}})$ can be arbitrary.
   \begin{exa} \label{tryisu}
For the sake of simplicity, we will assume that $\hh_1$ and
$\hh_2$ are separable infinite dimensional Hilbert spaces
(cf.\ Corollary~\ref{zobw}), that is,
   \begin{align} \label{sumpi}
\dim \hh_1 = \dim \hh_2 = \aleph_0.
   \end{align}
Under this assumption, for a given $\mathfrak p\in
\{0,1,2, \ldots\} \cup \{\aleph_0\}$, it is possible
to construct all Brownian-type operators
$\big[\begin{smallmatrix} V & E \\ 0 & S
\end{smallmatrix}\big] \in \gsbh$ with isometric
$E$ such that
   \begin{align} \label{dumi}
\dim \hh_1 \ominus \big(\ob{V} \oplus \ob{E}\big) =
\mathfrak p.
   \end{align}
Namely, take a subnormal operator $S\in \ogr{\hh_2}$
and two isometries $V\in \ogr{\hh_1}$ and $E\in
\ogr{\hh_2,\hh_1}$ such that $\ob{V} \perp \ob{E}$ and
\eqref{dumi} holds (this is possible due to
\eqref{sumpi}). Then clearly
$\big[\begin{smallmatrix} V & E \\
0 & S \end{smallmatrix}\big] \in \gsbh$. By \eqref{gqb-1} ad
\eqref{gqb-2}, all operators of class $\mathscr{S}$ with
isometric $E$ can be obtained in this way.

Let $\mathfrak m, \mathfrak n \in \{0,1,2, \ldots\}
\cup \{\aleph_0\}$. In view of the above discussion
and \eqref{sumpi},
there is $T=\big[\begin{smallmatrix} V & E \\
0 & S \end{smallmatrix}\big] \in \gsbh$ with isometric $E$,
$S$ is not normal and
   \begin{align} \label{dindsp}
\dim \hh_1 \ominus \big(\ob{V} \oplus \ob{E}\big) =
\mathfrak m.
   \end{align}
We will show that there exists an entrywise extension
$\big[\begin{smallmatrix} \tilde V & \tilde E \\ 0 & N
\end{smallmatrix}\big] \in \gnbhh{\kk_1, \kk_2}$  of
$T$ with isometric $\tilde E$ such that
$N=\mathrm{mne}\, S$ for which
   \begin{align} \label{utry}
\dim \kk_1 \ominus (\ob{\tilde V} \oplus \ob{\tilde
E}) = \mathfrak n.
   \end{align}

For, let $N\in \ogr{\kk_2}$ be a minimal normal
extension of $S$. Consider two cases.

{\sc Case 1.} $\mathfrak{n} \Le \mathfrak{m}$.

By Theorem~\ref{etrest} applied to the space $\dcal$ of
dimension $\mathfrak{n}$, $T$ has an entrywise extension
$\big[\begin{smallmatrix} \tilde V & \tilde E \\ 0 & N
\end{smallmatrix}\big] \in \gnbhh{\kk_1, \kk_2}$ such
that $\tilde E$ is an isometry (in particular, $|E|
\subseteq |\tilde E|$), the equality \eqref{utry} is
valid and
   \begin{gather} \label{nimulib}
\dim \kk_1 = \dim \hh_1 \overset{\eqref{sumpi}}=
\aleph_0.
   \end{gather}

{\sc Case 2.} $\mathfrak{n} > \mathfrak{m}$.

Set $\dcal=\hh_1 \ominus \big(\ob{V} \oplus \ob{E}\big)$.
Applying Theorem~\ref{etrest} again, we see that $T$ has an
entrywise extension $\big[\begin{smallmatrix} \tilde V &
\tilde E \\ 0 & N
\end{smallmatrix}\big] \in \gnbhh{\kk_1, \kk_2}$ such
that $\tilde E$ is an isometry (in particular, $|E|
\subseteq |\tilde E|$), the equality \eqref{nimulib}
is valid and
   \begin{align*}
\dcal=\kk_1 \ominus (\ob{\tilde V} \oplus \ob{\tilde
E}).
   \end{align*}
Next, it follows from Remark~\ref{cysyk} applied to
$\lcal=\ell^2$ that there exist a Hilbert space
$\mcal$ and $\big[\begin{smallmatrix} \tilde U &
\tilde F \\ 0 & N \end{smallmatrix}\big] \in
\gnbhh{\mcal, \kk_2}$ such that $T \preceq
\big[\begin{smallmatrix} \tilde V & \tilde E \\ 0 & N
\end{smallmatrix}\big] \preceq \big[\begin{smallmatrix}
\tilde U & \tilde F \\ 0 & N
\end{smallmatrix}\big]$,  $|\tilde F|=|\tilde E|=I$,
$\dim \mcal=\aleph_0$~and
   \begin{align*}
\mcal \ominus (\ob{\tilde U} \oplus \ob{\tilde F}) =
\dcal \oplus \ncal,
   \end{align*}
where $\ncal$ is a Hilbert space such that
$\dim{\ncal}=\mathfrak{n}-\mathfrak{m}$. Consequently,
$\big[\begin{smallmatrix} \tilde U & \tilde F
\\ 0 & N \end{smallmatrix}\big] \in \gnbhh{\mcal,
\kk_2}$ is a taut entrywise extension of $T$ for which
   \begin{align*}
\dim \mcal \ominus (\ob{\tilde U} \oplus \ob{\tilde
F}) \overset{\eqref{dindsp}}=\mathfrak{n}.
   \end{align*}

Finally, note that in both cases the following
equalities are valid:
   \begin{align*}
\dim \kk_2 = \dim \kk_2 \ominus \hh_2 = \aleph_0 =
\dim \kk_1.
   \end{align*}
This is an immediate consequence of
Corollary~\ref{zobw} and \eqref{nimulib}.
   \hfill $\diamondsuit$
   \end{exa}
By a {\em unilateral weighted shift with weights}
$\{\lambda_i\}_{i=0}^{\infty}$, we mean a uniquely
determined operator $W\in \ogr{\ell^2}$ such that
   \begin{align*}
We_i=\lambda_i e_{i+1}, \quad i\Ge 0,
   \end{align*}
where $\{\lambda_i\}_{i=0}^{\infty}$ is a sequence of complex
numbers and $\{e_i\}_{i=0}^{\infty}$ denotes the standard
orthonormal basis of $\ell^2$. Before stating the fourth
example, we need the following characterization of the
condition (ii) of Proposition~\ref{xyzzyv} in the case of
unilateral weighted shifts.
   \begin{pro} \label{suwt}
If $W\in \ogr{\ell^2}$ is an injective hyponormal
unilateral weighted shift with weights
$\{\lambda_i\}_{i=0}^{\infty}$ and $n\Ge 2$, then the
following statements are equivalent{\em :}
   \begin{enumerate}
   \item[(i)] the operators $W^n$ and
$\sum_{j=1}^{n-1}W^{*j}W^j$ commute,
   \item[(ii)] the sequence
$\{|\lambda_i|\}_{i=0}^{\infty}$ is constant.
   \end{enumerate}
   \end{pro}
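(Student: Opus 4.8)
The plan is to translate the operator equation in (i) into a scalar recurrence on the squared moduli $a_i := |\lambda_i|^2$ and then to use hyponormality to finish.

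First I would record the elementary diagonal formulas for a weighted shift. With $We_i = \lambda_i e_{i+1}$ one has $W^{*j}W^j e_i = \big(\prod_{k=i}^{i+j-1} a_k\big) e_i$, so the finite sum $D := \sum_{j=1}^{n-1} W^{*j}W^j$ is the diagonal operator $De_i = d_i e_i$ with
\[
d_i := \sum_{j=1}^{n-1}\ \prod_{k=i}^{i+j-1} a_k, \qquad i \Ge 0 .
\]
On the other hand $W^n e_i = \mu_i e_{i+n}$ with $\mu_i := \prod_{k=i}^{i+n-1}\lambda_k$, and since $W$ is injective every $\lambda_i$, hence every $\mu_i$, is nonzero. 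Evaluating $W^nD$ and $DW^n$ on the orthonormal basis $\{e_i\}_{i=0}^{\infty}$ gives $W^nD e_i = d_i\mu_i e_{i+n}$ and $DW^n e_i = \mu_i d_{i+n} e_{i+n}$, so, after cancelling the nonzero scalar $\mu_i$, the commutation condition (i) is equivalent to
\[
d_i = d_{i+n} \qquad \text{for all } i \Ge 0 .
\]
If $\{|\lambda_i|\}_{i=0}^{\infty}$ is constant, then $d_i$ does not depend on $i$ and these equalities hold trivially; this proves (ii)$\Rightarrow$(i).

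For (i)$\Rightarrow$(ii) I would bring in hyponormality. Testing $\|W^*e_i\| \Le \|We_i\|$ on basis vectors shows that $\{|\lambda_i|\}_{i=0}^{\infty}$, equivalently $\{a_i\}_{i=0}^{\infty}$, is nondecreasing. Hence for each $i$ and each $j \in \{1,\dots,n-1\}$ we have $\prod_{k=i+n}^{i+n+j-1} a_k \Ge \prod_{k=i}^{i+j-1} a_k$, so $d_{i+n}-d_i$ is a sum of $n-1$ nonnegative terms. If this sum is zero, every term vanishes; reading off the $j=1$ term gives $a_{i+n} = a_i$. As this holds for all $i \Ge 0$, the sequence $\{a_i\}_{i=0}^{\infty}$ is $n$-periodic, and a nondecreasing $n$-periodic sequence is constant (from $a_0 \Le a_1 \Le \dots \Le a_n = a_0$ one gets $a_0 = \dots = a_n$, and periodicity propagates this). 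Thus $\{|\lambda_i|\}_{i=0}^{\infty}$ is constant.

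I do not expect a genuine obstacle. The only points requiring a little care are the bookkeeping that converts the operator identity $W^nD = DW^n$ into the scalar condition $d_i = d_{i+n}$ (where injectivity is used precisely to cancel $\mu_i$) and the purely elementary fact that a nondecreasing periodic sequence is constant; hyponormality enters only through the monotonicity of $\{|\lambda_i|\}_{i=0}^{\infty}$.
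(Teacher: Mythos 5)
Your proposal is correct and follows essentially the same route as the paper: reduce the commutation relation to the scalar identity $d_i=d_{i+n}$ using injectivity, then exploit the monotonicity of $\{|\lambda_i|\}$ coming from hyponormality to force all terms of a nonnegative sum to vanish. The only cosmetic difference is that the paper first squeezes $\gamma_k\Le\gamma_{k+1}\Le\cdots\Le\gamma_{k+n}=\gamma_k$ and extracts $|\lambda_k|=|\lambda_{k+1}|$ from $\gamma_{k+1}-\gamma_k=0$, whereas you read off $a_{i+n}=a_i$ directly from $d_{i+n}-d_i=0$ and then note that a nondecreasing $n$-periodic sequence is constant; both are sound.
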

   \begin{proof}
Since $W^{*j}W^j e_k = \Big(\prod_{l=k}^{k+j-1}
|\lambda_l|^2\Big)e_k$ for all $k\Ge 0$ and $j\Ge 1$,
and $\lambda_i\neq 0$ for all $i\Ge 0$, we see that
$W^n$ and $\sum_{j=1}^{n-1}W^{*j}W^j$ commute if and
only if
   \begin{align} \label{hrypa}
\gamma_k:=\sum_{j=1}^{n-1} \Big(\prod_{l=k}^{k+j-1}
|\lambda_l|^2\Big) = \sum_{j=1}^{n-1}
\Big(\prod_{l=k+n}^{k+n+j-1} |\lambda_l|^2\Big), \quad
k \Ge 0.
   \end{align}
Suppose that \eqref{hrypa} holds. Since $W$ is
hyponormal, we see that
   \begin{align} \label{hdyw}
|\lambda_l| \Le |\lambda_{l+1}|, \quad l\Ge 0.
   \end{align}
Hence, we have
   \begin{align*}
\gamma_k \Le \gamma_{k+1} \Le \ldots \Le \gamma_{k+n}
\overset{\eqref{hrypa}}= \gamma_k, \quad k\Ge 0.
   \end{align*}
Thus
   \begin{align*}
0 = \gamma_{k+1} - \gamma_k = \sum_{j=1}^{n-1}
\bigg(\prod_{l=k+1}^{k+j}
|\lambda_l|^2-\prod_{l=k}^{k+j-1} |\lambda_l|^2\bigg),
\quad k\Ge 0.
   \end{align*}
By \eqref{hdyw}, the terms of the above sum are
nonnegative, so
   \begin{align*}
\prod_{l=k}^{k+j-1} |\lambda_l|^2 =
\prod_{l=k+1}^{k+j} |\lambda_l|^2, \quad k\Ge 0, \,
j=1, \ldots, n-1.
   \end{align*}
Substituting $j=1$, we see that
$|\lambda_k|=|\lambda_{k+1}|$ for all $k\Ge 0$, so the
implication (i)$\Rightarrow$(ii) is proved. The
converse implication is straightforward.
   \end{proof}
Now we are in a position to give an example of an operator of
class $\mathscr{S}$ whose $n$th power is not of class
$\mathscr{S}$ for every integer $n\Ge 2$.
  \begin{exa} \label{rozwis}
Let $W\in \ogr{\ell^2}$ be a subnormal unilateral weighted
shift with nonzero weights $\{\lambda_n\}_{n=0}^{\infty}$.
Denote by $\mu$ the Berger measure of $W$, that is, $\mu$ is
a unique Borel probability measure on the closed half-line
$[0,\infty)$ such that
   \begin{align*}
\|W^k e_0\|^2 = \int_{[0,\infty)} x^k d\mu(x), \quad
k\Ge 0.
   \end{align*}
It is easy to see that
   \begin{align} \label{konswr}
   \begin{minipage}{68ex}
{\em the sequence $\{|\lambda_n|\}_{n=0}^{\infty}$ is
constant if and only if the Berger measure $\mu$ of
$W$ is supported in a singleton $\{\theta\}$ with
$\theta\in (0,\infty)$.}
   \end{minipage}
   \end{align}
It is also well known that the sequence
$\{|\lambda_n|\}_{n=0}^{\infty}$ is constant if and
only if $W$ is unitarily equivalent to the positive
multiple of the (isometric) unilateral shift on
$\ell^2$ (this is true for arbitrary injective
unilateral weighted shifts, see \cite{shi74}). Set
$\hh_1=\hh_2=\ell^2$ and take two isometries $V,E\in
\ogr{\ell^2}$ such that $\ob{V} \perp \ob{E}$. Then
clearly $T:=\big[\begin{smallmatrix} V & E \\
0 & W\end{smallmatrix}\big] \in \gsbh$. Since subnormal
operators are hyponormal, we deduce from \eqref{konswr} and
Propositions~\ref{xyzzyv} and \ref{suwt} that for every
integer $n\Ge 3$, the following conditions are equivalent:
   \begin{enumerate}
   \item[$\bullet$] $T^n\in \gsbh$,
   \item[$\bullet$] $T^2\in \gsbh$,
   \item[$\bullet$] the Berger measure of $W$ is
supported in a singleton $\{\theta\}$ with $\theta >
0$.
   \end{enumerate}
In particular, if the Berger measure of $W$ is not supported
in a singleton subset of $(0,\infty)$, then by
Remark~\ref{aabbaa}, $|E_n| \not\subseteq |\tilde E_n|$ for
all integers $n\Ge 2$.
   \hfill $\diamondsuit$
   \end{exa}

   \end{document}